\newfont{\bssten}{cmssbx10}
\newfont{\bssnine}{cmssbx10 scaled 900}
\newfont{\bssdoz}{cmssbx10 scaled 1200}
\newtheorem{theorem}{Theorem}
\newtheorem{definition}{Definition}
\newtheorem{lemma}{Lemma}
\newtheorem{remark}{Remark}
\newtheorem{proposition}{Proposition}
\newtheorem{corollary}{Corollary}
\newtheorem{ex}{Example}
\DeclareRobustCommand{\cev}[1]{%
  \mathpalette\do@cev{#1}%
}
\newcommand{\do@cev}[2]{%
  \fix@cev{#1}{+}%
  \reflectbox{$\m@th#1\vec{\reflectbox{$\fix@cev{#1}{-}\m@th#1#2\fix@cev{#1}{+}$}}$}%
  \fix@cev{#1}{-}%
}
\newcommand{\fix@cev}[2]{%
  \ifx#1\displaystyle
    \mkern#23mu
  \else
    \ifx#1\textstyle
      \mkern#23mu
    \else
      \ifx#1\scriptstyle
        \mkern#22mu
      \else
        \mkern#22mu
      \fi
    \fi
  \fi
}
\def\epsilon{\varepsilon}
\newcommand{\be}{\begin{equation}}
\newcommand{\ee}{\end{equation}}
\newcommand\suitekk[1]{\left(#1\right)_{n\ge -k}}
\def\W{\mathbb W}
\newcommand{\norm}[1]{\left|#1\right|}
\newcommand\td[1]{\overline{#1}}
\def\R{{\mathbb R}}
\def\Z{{\mathbb Z}}
\def\N{{\mathbb N}}
\def\maC{{\mathscr{C}}}
\def\T{{\mathcal{T}}}
\def\cv{\cev}
\def\maA{{\mathscr{A}}}
\def\maV{{\mathcal{V}}}
\def\maB{{\mathscr{B}}}
\def\maD{{\mathcal{D}}}
\def\maH{{\mathcal{H}}}
\def\maM{{\mathscr{M}}}
\def\maP{{\mathcal{P}}}
\def\maE{{\mathcal{E}}}
\def\maI{{\mathcal{I}}}
\def\I{{\mathbb{I}}}
\def\v{{\--}}
\def\pv{{\not\!\!\--}}
\def\T2a{{\tau_{2\vec \alpha^{+}}}}
\def\t2a{{t_{2\vec \alpha^{+}}}}
\newcommand\suite[1]{\left(#1\right)_{n\in\N}}
\newcommand\suitez[1]{\left(#1\right)_{n\in\Z}}
\newcommand\suitei[1]{\left(#1\right)_{i\in\N_+}}
\def\\xi{{\cal{\xi}}}
\def\({{\Bigl(}}
\def\){{\Bigr)}}
\newcommand\bp{{\mathbb P^0}}
\newcommand\bpr[1]{{\mathbb P^0}\left[#1\right]}
\newcommand\pr[1]{{\mathbb P}\left[#1\right]}
\def\square{\ifmmode\sqr\else{$\sqr$}\fi}
\def\sqr{\vcenter{
         \hrule height.1mm
         \hbox{\vrule width.1mm height2.2mm\kern2.18mm\vrule width.1mm}
         \hrule height.1mm}}                  
\newcommand\ccc{\circledcirc}
\def\Mfcfs{M_{\textsc{fcfm}}}
\def\Mlcfs{M_{\textsc{lcfm}}}
\def\Wfcfs{W_{\textsc{fcfm}}}
\def\emptystate{\mathbf{\emptyset}}
\newcommand\gre{\mathbf{e}}
\def\maL{\mathcal L}
\def\maM{\mathcal M}
\begin{document}
\title{On the sub-additivity of stochastic matching}
\author{P. Moyal, A. Bu$\check{\mbox{s}}$i\'c and J. Mairesse\\
\small{Université de Lorraine, INRIA and CNRS/Universit\'e Pierre et Marie Curie}}
\maketitle

\begin{abstract}
\quad We consider a stochastic matching model with a general compatibility graph, as introduced in \cite{MaiMoy16}. 
We prove that most common matching policies (including FCFM, priorities and random) satisfy a particular sub-additive property, which we exploit to show in many cases, 
the coupling-from-the-past to the steady state, using a backwards scheme {\em \`a la} Loynes. 
We then use these results to explicitly construct perfect bi-infinite matchings, and to build a perfect simulation algorithm in the case where the buffer of the system is finite. 
\end{abstract}
%
\section{Introduction}
\label{sec:intro}
We consider a general stochastic matching model (GM), as introduced in \cite{MaiMoy16}: 
items of various classes enter a system one by one, to be matched by couples. Two items are compatible if and only if their classes are adjacent in a compatibility graph $G=(\maV,\maE)$ that is fixed beforehand. The classes of the entering items are drawn following a prescribed probability measure on $\maV$. 
This model is a variant of the Bipartite Matching model (BM) introduced in \cite{CKW09}, see also \cite{AW12}, in which case the compatibility graph is bipartite of bipartition 
$\maV=\maV_1 \cup \maV_2$. Along 
the various natural applications of this model, the nodes of $\maV_1$ and $\maV_2$ represent respectively classes of {customers} and {servers}, 
{kidneys} and {patients}, blood {givers} and blood {receivers}, {houses} and {applicants}, and so on. 
The items are matched by couples of $\maV_1\times\maV_2$, and also {arrive} 
by couples of $\maV_1\times\maV_2$. See \cite{ABMW17}, and reference therein. 
The extension of the BM to the context of general (instead of bipartite) compatibility graphs, leading to the GM model, 
allows naturally to take into account applications for which there is no bipartition of the classes of items, 
such as assemble-to-order systems, dating websites, car-sharing and cross-kidney transplants. 

An important generalization of the BM is the so-called {Extended Bipartite Matching} model (EBM) introduced in \cite{BGMa12}, where this independent assumption is relaxed. 
Possible entering couples are element of a bipartite {arrival graph} on the bipartition $\maV_1\cup\maV_2$. 
Importantly, notice that the GM can in fact be seen as a particular case of EBM, taking the bipartite double cover of $G$ as compatibility graph, and duplicating arrivals with a copy of an item of the same class.

Coming back to GM models, \cite{MaiMoy16} investigated the form of the {stability region} of the model, namely the set of probability measures on $\maV$ rendering the corresponding system stable. Partly relying on the aforementioned connection between GM and EBM, and the results of \cite{BGMa12}, \cite{MaiMoy16} shows that the stability region is always included in the set of measures satisfying the natural condition (\ref{eq:Ncond}) below. The form of the stability region is then heavily dependent on the 
geometry of the compatibility graph, and on the {matching policy}, i.e. the rule of choice of a match for an entering item whenever several possible matches are possible. A matching policy is said to have a {maximal} stability region for $G$ if the system is stable for any measure satisfying (\ref{eq:Ncond}). 
It is shown in \cite{MaiMoy16} that a GM on a bipartite $G$ is never stable, that a designated class of graphs 
(the complete $k$-partite graphs for $k\ge 3$, see below) make the stability region maximal for all matching policies, and that the policy `Match the Longest' always has a maximal stability region for a non-bipartite $G$. 
Applying fluid instability arguments to a continuous-time version of the GM, \cite{MoyPer17} show that, aside for a very particular class of graphs, whenever $G$ is not complete $k$-partite there {always} exists a policy of the strict priority type that does not have a maximal stability region, and that the `Uniform' random policy (natural in the case where no information is available to the entering items on the state of the system) never has a maximal stability region, 
thereby providing a partial converse of the result in \cite{MaiMoy16}. Following the approach of \cite{ABMW17} (see also \cite{GW20}), \cite{MBM21} shows that the GM model also enjoys a product form in steady state under the FCFM (`First Come, First Matched') policy. In recent years, the GM model was studied along various other angles, among which: Optimization \cite{NS17}, optimal control \cite{GurWa}, stability of matching models on hypergraphs \cite{RM21}, of graphs with self-loops \cite{BMMR21}, or models with reneging, see \cite{BDPS11,JMRS23}. Recently, GM models have been shown to share remarkable similarities with order-independent loss queues, see \cite{Com21}, and to exhibit performance paradoxes (non-monotonicity of the performance of the system with respect to the number of edges), in \cite{CDFB22}. See also \cite{CMB21,BMM23} for a in-depth study of the form of the set defined by \eqref{eq:Ncond} and thereby, for an explicit construction of stabilizing matching rates for FCFM and ML, in function of the general graph geometry.

In this work, we use coupling-from-the-past techniques to construct the stationary state of stable BM models, 
for a set of matching policies that includes FCFM and ML (in view of the maximality result announced above). 
It is well known since the pioneering works of Loynes \cite{Loynes62} and then Borovkov \cite{Bor84}, that backwards schemes and specifically strong backwards coupling convergence, can lead to an explicit construction  
of the stationary state of the system under consideration within its stability region. 
One can then use pathwise representations to compare systems in steady state, via the stochastic ordering of a given performance metric (see Chapter 4 of \cite{BacBre02} on such comparison results for queues). 
Moreover, we know since the seminal work of Propp and Wilson \cite{PW96} that coupling-from-the-past algorithms (which essentially use backwards coupling convergence) provide a powerful tool for simulating the steady state of the system, even whenever the latter distribution is not know in closed form. We aim at achieving such constructive results for the general matching model: under various conditions, we derive a stationary version of the system under general stationary ergodic assumptions, 
via a stochastic recursive representation of the system on the canonical space of its bi-infinite input. 
For this, we first observe that most usual matching policies (including FCFM, the optimal 'Match the Longest' policy, and - possibly randomized - priorities) satisfy a remarkable sub-additive property, 
which allows to build the appropriate backwards scheme to achieve this explicit construction. 
These results lead to a result of backwards coupling convergence, to a unique stationary state. 
Then, we apply this coupling result in two directions: First, we deduce the construction of a unique (up to the natural parity of the model, in a sense that will be specified below) stationary bi-infinite perfect matching. This result extends the results of \cite{AW12,ABMW17} to general graphs, and to a wide class of matching policies. Second, we use this backwards coupling result to construct a perfect simulation algorithm, in the case where the system capacity if finite. 

The paper is organized as follows. In Section \ref{sec:model} we introduce and formalize our model. 
The sub-additivity property of a wide class of matching policies is shown in Section \ref{sec:subadd}. Our coupling result is then presented and proven in Section \ref{sec:coupling}. In Section \ref{sec:matching} we show how these results 
can be used to construct (unique) perfect bi-infinite matchings of the incoming items. Our perfect simulation algorithm for finite-capacity systems is developed in Section \ref{sec:PW}.

\section{The model}
\label{sec:model}

\subsection{General notation}
\label{subsec:notation} 
Denote by $\R$ the real line, by $\N$ the set of non-negative integers and by $\N_+$, the subset of positive integers. For any two integers $m$ and $n$, denote by $\llbracket m,n \rrbracket=[m,n] \cap \N$. 
For any finite set $A$, let $S_A$ be the group of permutations of $A$, and for all permutation $s \in S_A$ and any $a\in A$, let $s[a]$ be the image of $a$ by $s$. 
Let $A^*$ (respectively, $A^{**}$) be the set of finite (resp., infinite) words over the alphabet $A$. 
Denote by $\emptyset$, the empty word of $A^*$. 
For any word $w \in A^*$ and any subset $B$ of $A$, we 
let $\norm{w}_B$ be the number of occurrences of elements of $B$ in $w$. 
For any  letter $a\in A$, we denote 
$\norm{w}_a:=\norm{w}_{\{a\}}$, and for any finite word $w$ we let $\norm{w}=\sum_{a\in A} \norm{w}_a$ be the
{\em length} of $w$. For a word $w \in A^*$ of length $\norm{w}=q$, we write $w=w_1w_2...w_q$, i.e. $w_{i}$ is the $i$-th letter 
of the word $w$. In the proofs below, we understand the word $w_1...w_k$ as $\emptyset$ whenever $k=0$. 
Also, for any $w \in A^*$ and any $i\in \llbracket 1,\norm{w} \rrbracket$, we denote by $w_{[i]}$, the word of length $\norm{w}-1$ obtained from $w$ by deleting 
its $i$-th letter. We let $[w]:=(\norm{w}_a)_{a\in A}\in \N^A$ be the {\em commutative
image} of $w$. Finally, a {\em suffix} of the word $w=w_1...w_k$ is a word $w_j...w_k$ obtained by deleting the first $j-1$ letters of $w$, for some $j \in \llbracket 1,k \rrbracket$.  
For any $p \in \N_+$, a vector $x$ in the set $A^p$ is denoted $x=(x(1),...,x(p))$. For any $i\in\llbracket 1,p \rrbracket$, we denote by $\gre_i$ the $i$-th vector of the canonical basis of $\R^p$, 
i.e. $\gre_i(j)=\delta_{ij}$ for any $j\in \llbracket 1,p \rrbracket$. The $\ell_1$ norm of $\R^p$ is denoted $\parallel . \parallel$.   

Consider a simple graph $G=(\maV,\maE)$, where $\maV$ denotes the set of nodes, and $\maE \subset \maV\times \maV$ is the set of edges. 
We use the
notation $u \v v$ for $(u,v) \in \maE$
and $u \pv v$ for $(u,v) \not\in \maE$. 
For $U \subset \maV$, we define $U^c = \maV \setminus U$ and
\[
\maE(U) = \{v \in \maV\,:\, \exists u \in U, \ u
\-- v\}\:.
\]
An {\em independent set} of $G$ is a non-empty subset $\maI \subset \maV$ 
which does not include any pair of neighbors, {\em i.e.} $\bigl(\forall i\neq j \in \maI, \ i \pv j\bigr)$. 
Let $\I(G)$ be the set of independent sets of $G$. An independent set $\maI$ is said to be {\em maximal} if $\maI \cup \{j\} \not\in \I(G)$ for any $j \not\in \maI$.

\subsection{Formal definition of the model}
\label{subsec:model}
We consider a {\em general stochastic matching model}, as was defined in \cite{MaiMoy16}: items enter one by one a system, and each of them belongs to a 
determinate class. The set of classes is denoted by $\maV$, and identified with $\llbracket 1,|\maV|\rrbracket$. We fix a connected simple graph $G=(\maV,\maE)$ having set of nodes $\maV$, termed {\em compatibility graph}. 
Upon arrival, any incoming item of class, say, $i \in \maV$ is either matched with an item present in the buffer, of a class $j$ such that 
$i \v j$, if any, or if no such item is available, it is stored in the buffer to wait for its match. 
Whenever several possible matches are possible for an incoming item $i$, a {\em matching policy} $\phi$ decides what is the match of $i$ without ambiguity. 
Each matched pair departs the system right away. 

We assume that the successive classes of entering items, and possibly their choices of match, are random.  
We fix a probability space $(\Omega,\mathcal F,\mathbb P)$ on which all random variables (r.v.'s, for short) are defined, and view, throughout, 
the input as a bi-infinite sequence $\left(V_n,\Sigma_n\right)_{n\in\Z}$ that is defined as follows:  
first, for any $n \in \Z$ we let $V_n \in \maV$ denote the class of the $n$-th incoming item. 
Second, we introduce the set   
\begin{equation*}
\mathcal S = S_{\maE(1)} \times ... \times S_{\maE(|\maV|)}, 
\end{equation*}
in other words for any $\sigma = \left(\sigma(1),...,\sigma(|\maV|)\right) \in \mathcal S$ and $i \in \maV$, 
$\sigma(i)$ is a permutation of the classes of items that are compatible with $i$ (which are identified with their indexes in $\llbracket 1,|\maV| \rrbracket$). 
Any array of permutations $\sigma \in \mathcal S$ is called {\em list of preferences}. 
For any $n \in \Z$, we let $\Sigma_n$ denote the list of preferences at time $n$, i.e. if $\Sigma_n=\sigma$ and $V_n=v$, then the permutation $\sigma(v)$ represents the order of preference of the entering $v$-item 
at $n$, among the classes of its possible matches. 
Throughout this work, we suppose that the sequence $\suitez{\left(V_{n},\Sigma_{n}\right)}$ is iid from the distribution $\mu \otimes \nu_\phi$ on $\mathcal V\times\mathcal S$. 

We also assume that $\mu$ has full support $\maV$ (we write $\mu \in \maM(\maV)$). 
Then, the matching policy $\phi$ will be formalized by an operator mapping the system state onto the next one, given the class of the entering 
item and the list of preferences at this time. The matching policies we consider are presented in detail in Section \ref{subsec:pol}. 

Altogether, the compatibility graph $G$, the matching policy $\phi$ and the measure $\mu$ fully specify the model, which we denote for short general matching (GM) model associated with $(G,\mu,\phi)$. 

\subsection{State spaces}
\label{subsec:state}
Fix the compatibility graph $G=(\maV,\maE)$ until the end of this section. 
Fix an integer $n_0 \ge 1$, and two realizations $v_1,...v_{n_0}$ of $V_1,...,V_{n_0}$ and $\sigma_1,...,\sigma_{n_0}$ of $\Sigma_1,...,\Sigma_{n_0}$. 
Define the two words $z\in \maV^*$ and $\varsigma\in\mathcal S^*$ by $z:=v_1...v_{n_0}$ and $\varsigma:=\sigma_1...\sigma_{n_0}$. 
Then, for any matching policy $\phi$ there exists a unique {\em matching} of the word $z$ associated to $\varsigma$, that is, a graph having set of nodes 
$\left\{v_1,...,v_{n_0}\right\}$ and whose edges represent the matches performed in the system until time $n_0$, if the successive arrivals are given by $z$ and the lists of preferences by $\varsigma$.  
This matching is denoted by $M_\phi(z,\varsigma)$. 
The state of the system is then defined as the word $W_\phi(z,\varsigma)\in \maV^*$, whose letters are the classes of the unmatched items at $n_0$, 
i.e. the isolated vertices in the matching $M^{\phi}(z,\varsigma)$, in their order of arrivals. The word $W_\phi(z,\varsigma)$ is called {\em queue detail} at time $n_0$. 
Then any admissible queue detail belongs to the set 
\begin{equation}
\mathbb W = \Bigl\{ w\in \maV^*\; : \; \forall  (i,j) \in \maE, \; |w|_i|w|_j=0  \Bigr\}.\label{eq-ncss}
     \end{equation} 
As will be seen below, depending on the service discipline $\phi$ we can also restrict the available information on the state of the system at time $n_0$, to a vector only keeping track of 
the number of items of the various classes remaining unmatched at $n_0$, that is, of the number of occurrences of the various letters of the alphabet $\maV$ in the word $W_\phi(z,\varsigma)$.    
This restricted state thus equals the commutative image of $W^{\phi}(z,\varsigma)$, and is called {\em class detail} of the system. It takes values in the set  
\begin{equation}
\mathbb X = \Bigl\{x \in \N^{|\maV|}\,:\,x(i)y(j)=0\mbox{ for any }(i,j)\in \maE\Bigl\}=\Bigl\{\left[w\right];\,w \in \mathbb W\Bigl\}.\label{eq-css}
\end{equation} 

\subsection{Matching policies}
\label{subsec:pol}
We now present and define formally, the set of matching policies which we consider. Notice that, contrary to various policies addressed in 
\cite{GurWa,NS17}, or in \cite{CBD19} regarding bipartite matching models, we address here only {\em greedy} policies; namely, it is never the case that two compatible items 
are stored together in the system. To the contrary, the (possibly non-greedy) matching policies addressed in the three aforementioned references allow the possibility of not executing 
a possible match, to wait for a more profitable future match. 

\begin{definition}
A matching policy $\phi$ is said {\em admissible} if the choice of match of an incoming item depends 
{solely} on the queue detail and the list of preferences drawn upon arrival. 
\end{definition} 
An admissible matching policy can be formally characterized by an action $\odot_{\phi}$ of $\maV\times \mathcal S$ on $\mathbb W$, defined as follows: 
if $w$ is the queue detail at a given time and the input is augmented by the arrival of a couple $(v,\sigma) \in \maV\times \mathcal S$ at that time, then the new queue detail $w'$ and $w$ satisfy the relation 
\begin{equation}
\label{eq:defodot}
w'= w\odot_{\phi} (v,\sigma).
\end{equation}

\subsubsection{Matching policies that depend on the arrival times}  
We first introduce two matching policies that depend on the arrival dates of the items. 
In 'First Come, First Matched' ({\sc fcfm}) the map  $\odot_{\textsc{fcfm}}$ is clearly independent of the list of preferences $\sigma$. 
It is given for all $w \in \mathbb W$ and all couples $(v,\sigma)$, by 
$$
w \odot_{\textsc{fcfm}} (v,\sigma) =
\left \{
\begin{array}{ll}
wv & \textrm{if } \; |w|_{\maE(v)} = 0;\\
w_{\left [\Phi(w,v)\right]} & \textrm{else, where }\Phi(w,v) = \arg\min \{|w_k|:\,k\in\maE(v)\},
\end{array}
\right .
$$
In 'Last Come, First Matched' ({\sc lcfm}) the updating map $\odot_{\textsc{lcfm}}$ is analog to $\odot_{\textsc{fcfm}}$, for 
$\Phi(w,v) = \arg\max\{|w_k|:\,k\in\maE(v)\}.$ 

\subsubsection{Class-admissible matching policies}
A matching policy $\phi$ is said to be {\em class-admissible} if it can be implemented 
by knowing only the class detail of the system. Let us define for any $v \in \maV$ and $x\in \mathbb X$,
\begin{equation*}
\maP(x,v) =\Bigl\{j\in \maE(v)\,:\,x\left(j\right) > 0\Bigl\},\label{eq:setP2}
\end{equation*}
the set of classes of available compatible items with the entering class $v$-item, if the class detail of the system is given by $x$. 
Then, a class-admissible policy $\phi$ is fully characterized by the probability distribution $\nu_\phi$ on $\mathcal S$, 
together with a mapping $p_\phi$ such that $p_\phi(x,v,\sigma)$ denotes the class of the match chosen 
by the entering $v$-item under $\phi$ for a list of preferences $\sigma$, in a system of class detail $x$ such that $\mathcal P(x,v)$ is non-empty. 
Then the arrival of $v$ and the draw of $\sigma$ from $\nu_\phi$ corresponds to the following action on
the class detail,
\begin{equation}
\label{eq:defccc}
x \ccc_{\phi} (v,\sigma) = \left \{
\begin{array}{ll}
x+\gre_v &\mbox{ if }\mathcal P(x,v)=\emptyset,\\
x-\gre_{p_\phi(x,v,\sigma)}&\mbox{ else}. 
\end{array}
\right .
\end{equation}

\begin{remark}
\label{rem:equiv}
\rm
As is easily seen, to any class-admissible policy $\phi$ corresponds an admissible policy, if one makes precise the rule of choice of match for the 
incoming items {\em within} the class that is chosen by $\phi$, in the case where more than one item of that class is present in the system. 
In this paper, we always make the assumption that within classes, the item chosen is always the {\em oldest} in line, i.e. we always apply a FCFM policy {\em within classes}.  
Under this convention, any class-admissible policy $\phi$ is admissible, that is, the mapping $\ccc_\phi$ from 
$\mathbb X\times (\maV \times \mathcal S)$ to $\mathbb X$ can be detailed into a map $\odot_{\phi}$ from 
$\mathbb W \times (\maV \times \mathcal S)$ to $\mathbb W$, as in (\ref{eq:defodot}), that is such that for any queue detail $w$ and any $(v,\sigma)$,
\[\left[w\odot_\phi (v,\sigma)\right] = \left[w\right]\ccc_\phi (v,\sigma).\]    
\end{remark}

\paragraph{Random policies.} 
In a random policy, the only information that is needed to determine the choice of match of an incoming item, is whether its various compatible classes have an empty queue or not. 
Specifically, the order of preference of each incoming item is drawn upon arrival following the prescribed probability distribution; then the considered item investigates its compatible classes in that order, 
until it finds one having a non-empty buffer, if any. The incoming item is then matched with an item of the latter class. In other words, a list of preferences $\sigma=\left(\sigma(1),...,\sigma\left(|\maV|\right)\right)$ is drawn from $\nu_\phi$ on $\mathcal S$, and we set   
\begin{equation}
p_{\phi}(x,v,\sigma)=\sigma(v)[k],\mbox{ where }k=\min \Bigl\{i \in
\maE(v)\,:\,\sigma(v)[i]\in \maP(x,v)\Bigl\}\label{eq:pphirandom}.
\end{equation}
In particular, the `Class-uniform' policy {\sc u} is such that $\nu_\phi$ is the uniform distribution on $\mathcal S$. 
In other words, for any $i \in \maV$ and any $j$ such that $j \v i$, $\sigma(i)[j]$ is drawn uniformly in $\maE(i)$, that is, 
the class of the match of the incoming $i$-item is chosen uniformly among all compatible classes having a non-empty buffer.

\paragraph{Priority policies.} 
In a priority policy, for any $v\in \maV$ the order of preference of $v$ in $\maE(v)$ is deterministic. This is thus another particular case of random policy in which a list of preference $\sigma^0 \in \Sigma$ is 
fixed beforehand, in other words $\nu_\phi=\delta_{\sigma^0}$ and (\ref{eq:pphirandom}) holds for $\sigma:=\sigma^0$. 

\paragraph{`Match the Longest' and `Match the Shortest'}
In `Match the Longest' ({\sc ml}),  
the newly arrived item chooses an item of the compatible class that has the longest line. Ties are broken by a
uniform draw between classes having queues of the same maximal length. 
Formally, set for all $x$ and $v$ such that $\mathcal P(x,v) \ne \emptyset$, 
\begin{equation*}
L(x,v) =\max\left\{x(j)\,:\,j \in \maE(v)\right\}\,\quad\mbox{ and }\quad\,
\maL(x,v) =\left\{i\in \maE(v)\,:\,x\left(i\right)=L(x,v)\right\}\subset \maP(x,v).
\end{equation*}
Then, set $\nu_\phi$ as the uniform distribution on $\mathcal S$. 
If the resulting sample is $\sigma$, we have
\begin{equation*}
p_{\textsc{ml}}(x,v,\sigma) =\sigma(v)[k],\mbox{ where }k=\min \Bigl\{i \in \maE(v)\,:\,\sigma(v)[i]\in \maL(y,c)\Bigl\}.
\end{equation*}
Likewise, the `Match the Shortest' ({\sc ms}) policy is defined similarly to {\sc ml}, except that the shortest non empty queue is chosen instead of
the longest.

\subsection{Markov representation}
\label{subsec:Markov}

Fix a (possibly random) word 
$w \in \mathbb W$ and a word $\varsigma \in \mathcal S^*$ having the same length as $w$. Denote for all $n\ge 0$ by $W^{\{w\}}_n$ the buffer content at time $n$ 
(i.e. just before the arrival of item $n$) if the buffer content at time 0 was set to $w$, in other words 
\[W^{\{w\}}_n= W_\phi\left(wV_0...V_n\,,\,\varsigma \Sigma_0...\Sigma_n\right).\]
It follows from (\ref{eq:defodot}) that the buffer-content sequence is a Markov chain, since we have that  
\[\left\{\begin{array}{ll}
W^{\{w\}}_0 &= w;\\
W^{\{w\}}_{n+1} &=W^{\{w\}}_n \odot_\phi (V_n,\Sigma_n),\,n\in\N. 
\end{array}\right.\]

Second, we deduce from (\ref{eq:defccc}) that for any class-admissible matching policy $\phi$ (e.g. $\phi=\textsc{random}, \textsc{ml}$ or $\textsc{ms}$), 
for any initial conditions as above, the $\mathbb X$-valued sequence $\suite{X_n}$ of class-details is also Markov: for any initial condition $x \in \mathbb X$, 
\begin{equation}
\label{eq:recurW}
\left\{\begin{array}{ll}
X^{\{x\}}_0 &= x;\\
X^{\{x\}}_{n+1} &=X^{\{x\}}_n \ccc_\phi (V_n,\Sigma_n),\,n\in\N.
\end{array}\right.
\end{equation}

\section{Sub-additivity}
\label{sec:subadd} 
We show hereafter that most of the
models we have introduced above satisfy a sub-additivity property
that will prove crucial in our main result. 
\begin{definition}[Sub-additivity]
\label{def:subadd}
An admissible matching policy $\phi$ is said to be {\em sub-additive} if, 
for all $z',z''\in \maV^*$, for all $\varsigma',\varsigma''\in\mathcal S^*$ whose letters are drawn by $\nu_\phi$ and such that $|\varsigma'|=|z'|$ and $|\varsigma''|=|z''|$, we have that 
$
\left|W_\phi(z'z'',\varsigma'\varsigma'')\right| \leq \left|W_\phi(z',\varsigma')\right| + \left|W_\phi(z'',\varsigma'')\right|. 
$
\end{definition}

\subsection{Non-expansiveness}
\label{subsec:nonexp}  
In the framework of stochastic recursions, the {\em non-expansiveness} property with respect to the $\ell_1$-norm, as introduced by Crandall and Tartar \cite{CT80}, amounts 
to the 1-Lipschitz property of the driving map of the recursion. Similarly, 
\begin{definition}[Non-expansiveness] 
A class-admissible policy $\phi$ is said {\em non-expansive} if 
for any $x$ and $x'$ in $\mathbb X$, any $v\in \maV$ and any $\sigma \in \mathcal S$ 
that can be drawn by $\nu_\phi$, 
\begin{equation}
\label{eq:defnonexp1}
\|x'\ccc_{\phi}(v,\sigma) - x\ccc_{\phi}(v,\sigma)\| \le \|x'-x\|.
\end{equation}
\end{definition} 


\begin{proposition}
\label{prop:nonexp1} 
Any random matching policy (in particular, priority and {\sc u}) is non-expansive. 
\end{proposition}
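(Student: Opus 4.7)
The plan is to fix $x, x' \in \mathbb X$, an arriving class $v\in\maV$, and a preference list $\sigma$ in the support of $\nu_\phi$, and to write the update in the additive form $x \ccc_\phi (v,\sigma) = x + \tau(x)$, where $\tau(x)=\gre_v$ if $\maP(x,v)=\emptyset$ and $\tau(x)=-\gre_{p_\phi(x,v,\sigma)}$ otherwise, with $p_\phi$ given by \eqref{eq:pphirandom}. Setting $d:=x-x'$ and $\eta:=\tau(x)-\tau(x')$, the claim \eqref{eq:defnonexp1} amounts to $\|d+\eta\|\le \|d\|$, which I will establish by a case analysis on whether $\maP(x,v)$ and $\maP(x',v)$ are empty.

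The two easy cases are the following. If $\maP(x,v)=\maP(x',v)=\emptyset$, then $\eta=0$. If both are non-empty and produce the same chosen class, again $\eta=0$: indeed $p_\phi(x,v,\sigma)$ and $p_\phi(x',v,\sigma)$ are defined as the earliest entry of $\sigma(v)$ lying in the respective $\maP(\cdot,v)$, so their coincidence entails identical decrements of $d$.

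The key case is when both $\maP(x,v),\maP(x',v)$ are non-empty but the chosen classes $a:=p_\phi(x,v,\sigma)$ and $b:=p_\phi(x',v,\sigma)$ differ; up to swapping $x$ and $x'$, assume $a$ precedes $b$ in the ordering $\sigma(v)$. By the minimality built into \eqref{eq:pphirandom}, $a\in\maP(x,v)$ while $a\notin \maP(x',v)$ (otherwise $a$ would have been selected for $x'$ as well). Hence $x'(a)=0$ and $x(a)\ge 1$, so $d(a)\ge 1$, and
\[
\|d+\eta\|=\|d-\gre_a+\gre_b\|=|d(a)-1|+|d(b)+\mathbf 1_{a\ne b}|+\!\!\sum_{j\notin\{a,b\}}\!\!|d(j)|\le \|d\|,
\]
since $|d(a)-1|=|d(a)|-1$ (because $d(a)\ge 1$) and $|d(b)+1|\le |d(b)|+1$. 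The mixed case $\maP(x,v)=\emptyset$, $\maP(x',v)\neq\emptyset$ gives $\eta=\gre_v+\gre_b$ with $b\in\maE(v)$. The constraint defining $\mathbb X$ in \eqref{eq-css}, combined with $\maP(x',v)\neq\emptyset$, forces $x'(v)=0$, whence $d(v)=x(v)\ge 0$; and $\maP(x,v)=\emptyset$ combined with $b\in\maE(v)$ forces $x(b)=0$ and $x'(b)\ge 1$, whence $d(b)\le -1$. Since $G$ is simple, $v\neq b$, so the contributions of $+\gre_v$ and $+\gre_b$ to $\|d+\eta\|-\|d\|$ are respectively $+1$ and $-1$, summing to $0$. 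The remaining mixed case is symmetric.

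The main (and only) real obstacle is the non-trivial subcase of the "both non-empty" scenario, where one has to exploit the fact that the \emph{same} $\sigma$ drives the updates on both $x$ and $x'$: it is precisely the preference-ordered structure \eqref{eq:pphirandom} that guarantees that when the two selected classes differ, the one appearing earlier in $\sigma(v)$ sits in a strictly positive coordinate of $|d|$, providing the compensating $-1$ that absorbs the $+1$ from the later class. Strict priority and \textsc{u} are the particular instances $\nu_\phi=\delta_{\sigma^0}$ and $\nu_\phi$ uniform, and are therefore covered.
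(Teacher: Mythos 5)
Your proof is correct and rests on exactly the same key observation as the paper's, namely the consistency property (\ref{eq:consist}): when the two chosen classes differ, the one earlier in $\sigma(v)$ must be unavailable in the other system, which supplies the compensating $-1$. The only difference is that you write out in full the one-step case analysis that the paper delegates to the inductive arguments of Lemmas 4 and 7 of \cite{MoyPer17}, so your version is a self-contained rendering of the same argument.
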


\begin{proof}
The result has been proven for priority and {\sc u} in \cite{MoyPer17}: this is precisely the inductive argument, respectively in the proofs of Lemma 4 and Lemma 7 therein. 
As is easily seen, the same argument can be generalized to any random policy $\phi$, once the list of preference that is drawn from $\nu_\phi$ is common to both systems.  
Indeed, the following consistency property holds: for any states $x$ and $x'$, any incoming item $v$ and any list of preferences $\sigma$ drawn from $\nu_\phi$, 
\begin{equation}
\label{eq:consist}
\biggl[\Bigl\{p_{\phi}(x,v,\sigma),p_{\phi}(x',v,\sigma)\Bigl\}\, \subset\, \maP(x,v) \cap \maP(x',v)\biggl]\quad \Longrightarrow \quad \biggl[p_{\phi}(x,v,\sigma) = p_{\phi}(x',v,\sigma)\biggl],     
\end{equation}
in other words, the choice of match of $v$ cannot be different in the two systems, if both options were available in both systems. 
The result follows for any random policy.  
\end{proof}

\begin{proposition}
\label{prop:nonexp2} 
{\sc ml} is non-expansive. 
\end{proposition}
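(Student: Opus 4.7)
The plan is a case analysis on what happens in each system when $v$ arrives, controlling the $\ell_1$-distance contribution of each coordinate that moves. Write $\Delta := x' \ccc_{\textsc{ml}}(v,\sigma) - x \ccc_{\textsc{ml}}(v,\sigma) - (x' - x)$; I will show $\|x'-x+\Delta\| \leq \|x'-x\|$.

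If neither system has a compatible item, both states gain $\gre_v$ and $\Delta=0$. If both systems match $v$ to the same class $j$, both states lose $\gre_j$ and $\Delta=0$. If exactly one system matches, say $x$ matches with $j\in\mathcal{P}(x,v)$ while $\mathcal{P}(x',v)=\emptyset$, then $x'(k)=0$ for all $k\in\maE(v)$, and the admissibility constraint $x(v)x(j)=0$ forces $x(v)=0$ as well. Coordinate $j$ contributes $|x'(j)-x(j)+1|-|x'(j)-x(j)|=-1$ (since $x'(j)=0<x(j)$), while coordinate $v\neq j$ contributes $|x'(v)-x(v)-1|-|x'(v)-x(v)|=|x'(v)+1|-x'(v)=+1$, for a net change of $0$.

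The real work is the case where both systems match, with chosen classes $j\neq j'$. Set $a=x'(j)-x(j)$ and $b=x'(j')-x(j')$; the total change in $\ell_1$-distance is $(|a+1|-|a|)+(|b-1|-|b|)$, and I need it to be $\le 0$. Since $j\in\mathcal{L}(x,v)$ and $j'\in\maE(v)$ we have $x(j)\geq x(j')$; similarly $x'(j')\geq x'(j)$. Summing these two inequalities yields
\[
a - b = \bigl(x'(j)-x'(j')\bigr) + \bigl(x(j')-x(j)\bigr) \leq 0.
\]
A routine integer computation shows $(|a+1|-|a|)+(|b-1|-|b|)\le 0$ unless $a\ge 0$ and $b\le 0$; combined with $a\le b$, this degenerate regime forces $a=b=0$, and tracing the equality case back through the two defining inequalities forces $x(j)=x(j')=x'(j)=x'(j')$.

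The main obstacle—and the step that really uses the tie-breaking structure of \textsc{ml}—is ruling out this degenerate regime. Suppose it occurred. From $x(j)=x(j')=L(x,v)$ I get $j'\in\mathcal{L}(x,v)$ in addition to $j$; symmetrically, $x'(j)=x'(j')=L(x',v)$ gives $j\in\mathcal{L}(x',v)$. Now apply the common list of preferences: since both $j$ and $j'$ lie in $\mathcal{L}(x,v)\cap\mathcal{L}(x',v)$, the minimizing index $\min\{i\in\maE(v):\sigma(v)[i]\in\mathcal{L}(x,v)\}$ and its analogue for $x'$ both select an element lying in this common subset, and a direct comparison of the two minima forces them to coincide. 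Hence $p_{\textsc{ml}}(x,v,\sigma)=p_{\textsc{ml}}(x',v,\sigma)$, contradicting $j\neq j'$. This completes the proof.
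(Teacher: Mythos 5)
Your proof is correct and follows essentially the same route as the paper's: isolate the case where the two systems match $v$ to different classes $j\neq j'$, bound the change on those two coordinates by a sign analysis (your $a\le b$ inequality is the paper's $x(k')\le x(k)$, $x'(k)\le x'(k')$ in disguise), and rule out the residual all-equal case by showing the common list of preferences would then force the same choice in both systems. The only blemish is a sign typo in the one-sided-match case (the displayed term should be $|x'(v)-x(v)+1|$, since $x'$ gains $\gre_v$), but your evaluation of it is the correct one.
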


\begin{proof}
The proof is similar to that for random policies, 
except for the consistency property (\ref{eq:consist}), which does not hold in this case. 
Specifically, an entering item can be matched
with items of two different classes in the two systems, whereas the
queues of these two classes are non-empty in both systems. 
Let us consider that case: specifically, a $v$-item enters the system, and for a common draw $\sigma$ according to the (uniform) distribution 
$\nu_{\textsc{ml}}$, we obtain $p_{\textsc{ml}}(x,v,\sigma)=k$ and $p_{\textsc{ml}}(x',v,\sigma)=k'$ for 
$\{k,k'\} \subset \maP(x,v) \cap \maP(x',v)$ and $k\ne k'$. 
Thus 
\begin{equation}
\label{eq:losers1} 
\|x'\ccc_{\textsc{ml}}(v,\sigma) -
x\ccc_{\textsc{ml}}(v,\sigma)\| =\sum_{i \ne k,k'}
|x(i)-x'(i)|+R, \end{equation} 
where $R=\left|(x(k)-1)-x'(k)\right|+\left|x(k')-\left(x'(k')-1\right)\right|.$ 
Then we have 
\[R=\left\{\begin{array}{ll}
           \left|x(k)-x'(k)\right|+\left|x(k')-x'(k')\right|-2 &\mbox{ if }x(k) > x'(k)\mbox{ and }x'(k') > x(k');\\
           \left|x(k)-x'(k)\right|+\left|x(k')-x'(k')\right| &\mbox{ if }x(k) \le x'(k)\mbox{ and }x'(k') > x(k');\\
           \left|x(k)-x'(k)\right|+\left|x(k')-x'(k')\right| &\mbox{ if }x(k) > x'(k)\mbox{ and }x'(k') \le x(k').
           \end{array}\right.\]
Observe that the case $x(k) \le x'(k)$ and $x'(k') \le x(k')$ cannot
 occur. Indeed, by the definition of {\sc ml} we have that $x(k') \le x(k)\,\mbox{ and }x'(k) \le x'(k')$, 
which would imply in turn that 
$x(k)=x(k')= x'(k) = x'(k').$ 
This is impossible since, in that case, under the common list of preferences $\sigma$ both systems would have
chosen the same match for the new $v$-item. 
As a conclusion, in view of (\ref{eq:losers1}), in all possible cases
we obtain that 
\begin{equation*}
\|x'\ccc_{\textsc{ml}}(v,\sigma) -
x\ccc_{\textsc{ml}}(v,\sigma)\|
\le \sum_{i \ne k,k'} |x(i)-x'(i)|+\left|x(k)-x'(k)\right|+\left|x(k')-x'(k')\right|
=\|x'-x\|,
\end{equation*} 
which concludes the proof.
\end{proof}

As the following counter-example demonstrates, the policy `Match the Shortest' is not non-expansive:

\begin{ex}[{\sc ms} is not non-expansive]
\label{ex:MS}
Take the graph of Figure \ref{Fig:paw} as a compatibility graph.
Set $x=(2,0,1,0)$, $x'=(1,0,2,0)$ and $v = 2$. Then we obtain that for all $\sigma$, 
$x\ccc_{\textsc{ms}}(v,\sigma) = (2,0,0,0)$ and $x'\ccc_{\textsc{ms}}(v,\sigma) = (0,0,2,0)$, 
and thus  $$\|x'\ccc_{\textsc{ms}}(v,\sigma) - x\ccc_{\textsc{ms}}(v,\sigma)\| = 4 > 2 = \|x'-x\|.$$
\end{ex}

\begin{figure}[h!]
\begin{center}
\begin{tikzpicture}[scale=0.8]
\draw[-] (2,3) -- (2,2);
\draw[-] (2,2) -- (1,1);
\draw[-] (2,2) -- (3,1);
\draw[-] (1,1) -- (3,1);
\fill (2,3) circle (2pt) node[right] {\small{1}} ;
\fill (2,2) circle (2pt) node[right] {\small{2}} ;
\fill (1,1) circle (2pt) node[below] {\small{3}} ;
\fill (3,1) circle (2pt) node[below] {\small{4}} ;
\end{tikzpicture}
\vspace*{-0.3cm}
\caption[smallcaption]{The `paw' graph.}
\label{Fig:paw}
\end{center}
\end{figure}
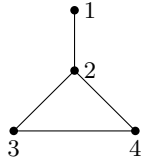

We have the following result,

\begin{proposition}
\label{prop:nonexpo}
Any non-expansive policy is sub-additive.
\end{proposition}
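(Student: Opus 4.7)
The plan is to translate the claim into a statement about class details and then iterate the non-expansiveness inequality. First, I will observe that $\left|W_\phi(z,\varsigma)\right| = \|[W_\phi(z,\varsigma)]\|$ for any $z \in \maV^*$ and any matching word of preferences $\varsigma$, since the length of a word equals the $\ell_1$-norm of its commutative image. Since a non-expansive policy is by definition class-admissible, Remark \ref{rem:equiv} guarantees that the class detail evolves via the iterations of $\ccc_\phi$ driven by the input $(V_n,\Sigma_n)$, so it suffices to control $\|[W_\phi(z'z'',\varsigma'\varsigma'')]\|$.

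Second, I will set up a pathwise comparison between two systems sharing the same input after a common epoch. Start a system $\maA$ from the empty class detail and feed it the combined input $(z'z'',\varsigma'\varsigma'')$; after reading $z'$ with preferences $\varsigma'$, its class detail is $[W_\phi(z',\varsigma')]$, and the final class detail is $[W_\phi(z'z'',\varsigma'\varsigma'')]$. Start a second system $\maB$ from the empty class detail and feed it the input $(z'',\varsigma'')$; its final class detail is $[W_\phi(z'',\varsigma'')]$. Over the last $\norm{z''}$ arrivals both systems process the very same sequence of couples $(v,\sigma)$, but from initial states $[W_\phi(z',\varsigma')]$ and $0$ respectively.

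Third, I apply non-expansiveness inductively over these $\norm{z''}$ common steps, so that
\[
\left\|[W_\phi(z'z'',\varsigma'\varsigma'')] - [W_\phi(z'',\varsigma'')]\right\| \;\le\; \left\|[W_\phi(z',\varsigma')] - 0\right\| \;=\; \left|W_\phi(z',\varsigma')\right|.
\]
By the triangle inequality for $\|\cdot\|$, this gives
\[
\left|W_\phi(z'z'',\varsigma'\varsigma'')\right| \;=\; \left\|[W_\phi(z'z'',\varsigma'\varsigma'')]\right\| \;\le\; \left|W_\phi(z',\varsigma')\right| + \left|W_\phi(z'',\varsigma'')\right|,
\]
which is the sub-additivity inequality.

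The only delicate point is verifying that the coupling over the common tail is legitimate, i.e.\ that the two runs really can be fed identical couples $(v,\sigma)$: this is precisely what is granted by class-admissibility and the fact that non-expansiveness in Definition (\ref{eq:defnonexp1}) is stated with the \emph{same} $(v,\sigma)$ acting on both states. Everything else reduces to a one-line induction and a triangle inequality, so I do not foresee further obstacles.
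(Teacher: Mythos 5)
Your proof is correct and follows essentially the same route as the paper: both arguments run two copies of the class-detail recursion on the common suffix input, one started from $\left[W_\phi(z',\varsigma')\right]$ and one from $\mathbf 0$, iterate the non-expansiveness inequality to bound their $\ell_1$-distance by $\left|W_\phi(z',\varsigma')\right|$, and conclude by the triangle inequality. Your explicit remark that the passage from queue details to class details is licensed by Remark \ref{rem:equiv} is a point the paper leaves implicit, but the substance is identical.
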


\begin{proof}
Fix a non-expansive matching policy $\phi$. Keeping the 
notations of Definition \ref{def:subadd}, let us define the two
arrays $(x_i)_{i=1,...,|v''|}$ and $\left(x'_i\right)_{i=1,...,|v''|}$ to be the class
details of the system at arrival times, starting respectively from
an empty system and from a system of buffer content $w'$, and
having a common input $\left(v''_i,\sigma''_i\right)_{i=1,...,|v''|}$, 
where $(\sigma''_i)_{i=1,...,|u''|}$ are drawn from $\nu_\phi$ on $\mathcal S$.   
In other words, we set 
\[\left\{\begin{array}{ll}
x_0 &= \mathbf 0;\\
x'_0 &= \left[w'\right]\\
\end{array}\right.
\quad \mbox{and} \quad 
\left\{\begin{array}{ll}
x_{n+1} &= x_{n} \ccc_{\phi} \left(v''_{n+1},\sigma''_{n+1}\right),\,n\in\left\{0,\dots,|v''|-1\right\};\\
x'_{n+1}&=x'_{n} \ccc_{\phi}\left(v''_{n+1},\sigma''_{n+1}\right),\,n\in\left\{0,\dots,|v''|-1\right\}.
\end{array}\right.\]
Applying (\ref{eq:defnonexp1}) at all $n$, we obtain by induction that for all $n
\in \left\{0,\dots,|v''|\right\}$,
\begin{equation}
 \|x'_n - x_n\| \le \|x'_0-x_0\|= |w'|.\label{eq:nonexprec}
\end{equation}
Now observe that by construction, $x_{|v''|}=\left[w''\right]$ which, together with (\ref{eq:nonexprec}), implies that
\begin{equation*}
|w| = \left\|x'_{|v''|}\right\|\le \left\|x'_{|v''|} - x_{|v''|}\right\|+\left\|x_{|v''|}\right\|\le |w'|+|w''|.
        \end{equation*}
\end{proof}

From Propositions \ref{prop:nonexp1}, \ref{prop:nonexp2} and \ref{prop:nonexpo}, we deduce the following, 
\begin{corollary}
\label{cor:sub1}
The matching policies Random (including Priorities and {\sc u}) and {\sc ml} are sub-additive.
\end{corollary}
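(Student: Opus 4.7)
The statement is an immediate consequence of the three results established immediately before it, so the proof plan is essentially a chaining argument and the main task is to check that the hypotheses line up correctly.

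My plan is to observe that Propositions \ref{prop:nonexp1} and \ref{prop:nonexp2} together establish that the matching policies Random (which includes the Priority policies and \textsc{u} as special cases, since these are defined by specializing the law $\nu_\phi$) and \textsc{ml} are non-expansive in the sense of \eqref{eq:defnonexp1}. Then Proposition \ref{prop:nonexpo} asserts that non-expansiveness implies sub-additivity for any class-admissible policy. Hence, to conclude, I only need to combine these three statements.

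One subtle point to check is that the policies listed in the corollary are indeed class-admissible (so that Proposition \ref{prop:nonexpo}, which is stated for class-admissible policies via the map $\ccc_\phi$, applies). This is immediate from the definitions in Section \ref{subsec:pol}: Random policies are characterized through $p_\phi$ acting on the class detail, Priority and \textsc{u} are special cases of Random corresponding to $\nu_\phi=\delta_{\sigma^0}$ and to the uniform distribution on $\mathcal{S}$ respectively, and \textsc{ml} is explicitly defined via the mapping $p_{\textsc{ml}}$ on class details. In each case, the convention made in Remark \ref{rem:equiv} (tie-breaking within a class by \textsc{fcfm}) turns the class-admissible policy into an admissible one, so that the notion of sub-additivity in Definition \ref{def:subadd} applies.

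I anticipate no genuine obstacle here: once the three propositions are in hand, the corollary is just a syllogism. The only care to take is purely notational, namely to state the result in terms of the admissible policy $\phi$ (acting on queue details $w\in\mathbb{W}$) rather than the class-admissible policy (acting on class details $x\in\mathbb{X}$), which is legitimate thanks to Remark \ref{rem:equiv}. A one-line proof citing Propositions \ref{prop:nonexp1}, \ref{prop:nonexp2} and \ref{prop:nonexpo} therefore suffices.
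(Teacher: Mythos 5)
Your proposal is correct and matches the paper exactly: the corollary is stated in the paper as a direct consequence of Propositions \ref{prop:nonexp1}, \ref{prop:nonexp2} and \ref{prop:nonexpo}, with no further argument given. Your additional check that the listed policies are class-admissible (via Remark \ref{rem:equiv}) is a sound, if implicit in the paper, verification that the chaining is legitimate.
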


\subsection{{\sc fcfm} and {\sc lcfm}}
\label{subsec:proofFIFOLIFO}
As Example \ref{ex:fcfmlcfm} demonstrates, we cannot exploit a non-expansiveness property similar to 
(\ref{eq:defnonexp1}) for the disciplines {\sc fcfm} and {\sc lcfm}. 
\begin{ex}\label{ex:fcfmlcfm} 
Consider the graph of Figure \ref{Fig:paw}. 
Then, regardless of $\sigma$ we have for instance that
\begin{align*}
\left\|\left[133 \odot_{\textsc{fcfm}} (2,\sigma)\right] - \left[311 \odot_{\textsc{fcfm}}(2,\sigma)\right]\right\| 
&=\left\|\left[33\right] - \left[11\right]\right\|=4 > 2 = \left\|\left[133\right] - \left[311\right]\right\|;\\
\left\|\left[331\odot_{\textsc{lcfm}}(2,\sigma)\right] - \left[113 \odot_{\textsc{lcfm}}(2,\sigma)\right]\right\| 
&=\left\|\left[33\right] - \left[11\right]\right\|= 4 > 2 =\left\|\left[331\right] - \left[11\right]\right\|.
\end{align*}
\end{ex}

We nevertheless have the following result,

\begin{proposition}
\label{prop:sub2}
The matching policies \textsc{fcfm} and \textsc{lcfm} are sub-additive. 
\end{proposition}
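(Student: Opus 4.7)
The strategy is to reduce sub-additivity to an inequality on match counts. Since $|W_\phi(z,\varsigma)| = |z| - 2M_\phi(z,\varsigma)$, where $M_\phi$ denotes the number of matches performed by $\phi$ on input $(z,\varsigma)$, sub-additivity is equivalent to $M_\phi(z'z'',\varsigma'\varsigma'') \ge M_\phi(z',\varsigma') + M_\phi(z'',\varsigma'')$. Running $\phi$ on $z'z''$ from the empty buffer coincides with first running $\phi$ on $z'$ (producing $M_\phi(z',\varsigma')$ matches and ending in state $w_0 := W_\phi(z',\varsigma') \in \mathbb W$) and then running $\phi$ on $z''$ starting from $w_0$. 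The first phase already accounts for $M_\phi(z',\varsigma')$ matches, so it remains to show that running $\phi$ on $z''$ from $w_0$ produces at least $M_\phi(z'',\varsigma'')$ matches, that is, that starting $\phi$ from the non-empty state $w_0$ yields no fewer matches than starting from the empty state.

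I would prove this by induction on $|w_0|$: writing $w_0$ one letter at a time, it suffices to establish the following single-step lemma. For any initial buffer $w$, any $v \in \maV$, and any input $(z,\varsigma)$, the run of $\phi$ on $z$ starting from $wv$ performs at least as many matches as the run starting from $w$. Iterating this lemma, adding the letters of $w_0$ one by one from the empty buffer, then yields the desired inequality on Phase 2 and hence sub-additivity.

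To prove this lemma, I would couple the two runs $R$ (starting from $w$) and $R_v$ (starting from $wv$) under the common arrival sequence $z$ and common preferences $\varsigma$, and maintain the following invariant at every time step: either (a) the buffer of $R_v$ is obtained from that of $R$ by inserting one item at some position, and the two runs have performed the same number of matches so far; or (b) the buffer of $R$ is obtained from that of $R_v$ by inserting one item at some position, and $R_v$ has performed exactly one more match than $R$. Initially case (a) holds with extra item $v$ at the end. On each arrival I would run a case analysis distinguishing the current type, whether the extra item is compatible with the incoming class, and (for $\textsc{fcfm}$) the position of the extra item relative to the oldest compatible item in the other buffer. Each sub-case shows that the invariant is preserved: transitions from (a) to (b) create a new extra match in $R_v$, transitions from (b) to (a) discharge that extra match into an equality of counts, and intra-type transitions only update the identity and position of the extra item. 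Since both types keep $R_v$'s match count at least that of $R$, the lemma follows.

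The proof for $\textsc{lcfm}$ is entirely symmetric: the case analysis replaces ``oldest compatible'' by ``newest compatible'' in the other buffer, and the invariant with its transitions carries over unchanged. The main obstacle is the exhaustiveness of this case analysis: one must carefully enumerate every interaction between the incoming class, the extra item (whose identity and position may shift from step to step) and the common portion of the two buffers, and verify that each case either preserves the type of the invariant or swaps it by absorbing, resp.\ creating, exactly one extra match in $R_v$. Once this verification is completed, iterating the single-step lemma gives the sub-additivity of both $\textsc{fcfm}$ and $\textsc{lcfm}$.
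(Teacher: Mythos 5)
Your proposal is correct, and its overall skeleton matches the paper's: both proofs first reduce the general statement to the effect of a single extra initial item (your Phase~2 reduction via $w_0=W_\phi(z',\varsigma')$ is exactly the paper's Step~II, which peels off the letters of $w'=W_{\textsc{fcfm}}(z')$ one at a time and notes that $W_{\textsc{fcfm}}(z'z'')=W_{\textsc{fcfm}}(w'z'')$), and both then iterate a single-letter lemma. Where you genuinely diverge is in how that lemma is proved. The paper argues \emph{globally} on the final matching: prepending one letter either leaves the matching of $z''$ intact, matches with a previously unmatched item, or triggers a chain of rematchings $(z''_{i_1},z''_{j_2}),(z''_{i_2},z''_{j_3}),\dots$ that terminates with at most one net loss --- an alternating-path argument whose index monotonicity holds for {\sc fcfm} but, as the paper concedes, not for {\sc lcfm}. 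You instead run the two systems \emph{forward in lockstep} and maintain the invariant that the two buffers always differ by a single inserted item while the match counts differ by $0$ or $1$; I have checked that this invariant is indeed preserved in all cases for both {\sc fcfm} and {\sc lcfm} (the only subtle transitions are those where the extra item changes identity, e.g.\ when the $wv$-run consumes the extra item while the $w$-run consumes the common item that the other run would have chosen). Your route is arguably more uniform, since {\sc fcfm} and {\sc lcfm} are handled by the same invariant with only ``oldest'' replaced by ``newest'', and it yields the stronger pointwise statement that the buffer sizes differ by at most one at \emph{every} time, i.e.\ a $1$-Lipschitz property with respect to an insertion distance on $\mathbb W$ (interesting given that the paper shows these policies are \emph{not} non-expansive for the $\ell_1$ distance on class details); the paper's route, in exchange, exhibits the explicit combinatorial structure (the rematch chain) of how the final matching is perturbed. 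Two small points to tidy up in a full write-up: state the single-step lemma only for $w,wv\in\mathbb W$ (which suffices, since every prefix of $w_0\in\mathbb W$ is again in $\mathbb W$), and carry out the finite case analysis explicitly rather than describing it, since the whole weight of the proof rests on its exhaustiveness.
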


\begin{proof}
As we cannot apply the arguments of Proposition \ref{prop:nonexpo},  
we resort to a direct proof for both {\sc fcfm} (related to the proof of Lemma 4 in \cite{ABMW17}) and {\sc lcfm}. 
We keep the notation of Definition \ref{def:subadd}, where we drop for short the dependence 
on $\varsigma$ in the notations $\Mfcfs(.)$ and $\Mlcfs(.)$, as the various {\sc fcfm} and {\sc lcfm} matchings do not 
depend on any list of preferences. 

\paragraph{FCFM.} 
We start with the policy \textsc{fcfm}. We proceed in two steps, 

 {\bf Step I:} Let $|z'|=1$, and assume that $\Mfcfs(z'')$ has $K$ unmatched items. 
We need to show that $\Mfcfs(z'z'')$ has at
most $K+1$ unmatched items. 
There are three possible cases:

(a) The item $z'$ is unmatched in $\Mfcfs(z'z'')$ . 
Then, by the definition of {\sc fcfm} $z' \pv z''_j$ for any letter $z''_j$ of $z''$. 
Again from the definition of {\sc fcfm}, the presence in line of this incompatible item $z'$ does not influence the choice of 
match of any subsequent item of the word $z''$. Thus the matched pairs in $\Mfcfs(z'z'')$ are exactly the ones in $\Mfcfs(z'')$, so there are $K+1$ unmatched items in $\Mfcfs(z'z'')$.

(b) The item $z'$ gets matched in $\Mfcfs(z'z'')$ with an unmatched item $z''_{j_1}$ of $\Mfcfs(z'')$. Then, any unmatched item in $\Mfcfs(z'')$ remains unmatched in $\Mfcfs(z'z'')$. 
On another hand, for any matched item $z''_i$ in $\Mfcfs(z'')$ (let $z''_j$ be its match), either $z''_i \pv z''_{j_1}$, and thus choses its match in $\Mfcfs(v)$ regardless of whether $z''_{j_1}$ is matched or not, 
and thus choses again $z''_j$, or $z''_i \v z''_{j_1}$ and thus from the {\sc fcfm} property, we have $j < j_1$ and in turn $z''_j$ remains matched with $z''_j$ in $\Mfcfs(z'z'')$. Therefore the matching induced by the letters of $z''$ in $\Mfcfs(z'z'')$ remains precisely $\Mfcfs(z'')$, so 
$\Mfcfs(z'z'')$ has $K-1$ unmatched items.

(c) The item $z'_1$ gets matched with an item $z''_{j_1}$ that was matched in $\Mfcfs(z'')$ to some item $z''_{i_1}$. The {\sc fcfm} matching of $z'_{1}$ with $z''_{j_1}$ 
breaks the old match $(z''_{i_1}, z''_{j_1})$, so we now need to search a new match for $z''_{i_1}$.
Either there is no {\sc fcfm} match for $z''_{i_1}$ and we stop, or we find a match $z''_{j_2}$.
The new pair $(z''_{i_1}, z''_{j_2})$ potentially broke an old pair $(z''_{i_2}, z''_{j_2})$. 
We continue on and on, until either $z''_{i_k}$ cannot find a new match or $z''_{j_k}$ was not previously matched, and consequently, 
with $K$ unmatched items in the first case and $K-1$ in the second. 
Observe that due to the {\sc fcfm} property, we have $i_{\ell} \leq i_{\ell +1}$ and $j_{\ell} \leq j_{\ell +1}$ for all $\ell\le k$.

{\bf Step II:} Consider now an arbitrary finite word $z'$. Observe, that if $(z'_i,z'_j) \in \Mfcfs(z')$, then
$(z'_i,z'_j) \in \Mfcfs(z'z'')$, as is the case for any admissible policy. Thus, denoting $w'= W_{\textsc{fcfm}}(z')$, we have $W_{\textsc{fcfm}}(z'z'')=W_{\textsc{fcfm}}\left(w'z''\right)$. 
Denote $w' = w'_1 \ldots w'_p$.  
We will consider one by one the items in $w'$, from right to left. If we denote for all $1 \leq i \leq p$, 
$(\Mfcfs)_i= \Mfcfs(w'_{p-i+1}\ldots w'_pz'')$ and $K_i$, the number of unmatched items in $(\Mfcfs)_i$, Step I entails by an immediate induction that for all 
$1 \leq i \leq p$, $K_{i} \leq i + \left|W_\phi(z'')\right|.$ Hence we 
finally have \[\left|W_\phi(z'z'')\right|=K_p \le p + \left|W_\phi(z'')\right| = \left|W_\phi(z')\right|+\left|W_\phi(z'')\right|.\] 

\paragraph{LCFM.} 
We now turn to {\sc lcfm}, for which we apply the same procedure as above, 

{\bf Step I:} Set $|z'|=1$ and assume that $\Mfcfs(z'')$ has $K$ unmatched items. The three different cases are the same as above, 

(a) If $z'_1$ is unmatched in $\Mlcfs(z'z'')$, then $z'_1$ is incompatible with $z''_1$, otherwise the two items would have been matched. 
In turn, if follows from the definition of {\sc lcfm} that the presence in line of $z'_1$ does not influence the choice of match of any item 
$z''_j$ that is matched in $\Mlcfs(z'')$, even though $z'_1\v z''_j$. So $\Mlcfs\left(z'z''\right)$ has exactly $K+1$ unmatched items.  

(b) Whenever $z'_1$ is matched in $\Mlcfs(z'z'')$ with an item $z''_{j_1}$ that was unmatched in $\Mlcfs(z''),$ any 
matched item $z''_i$ in $\Mlcfs(z'')$ that is compatible with $z''_{j_1}$ has found in $z''$ a more recent compatible match $z''_j$. 
The matching of $z''_i$ with $z''_j$ still occurs in $\Mlcfs(z'z'')$. 
Thus, as above the matching induced in $\Mlcfs(z'z'')$ by the nodes of $z''$ is not affected by the 
match $(z'_1,z''_{j_1})$, so there are are $K-1$ unmatched items in $\Mlcfs(z'z'')$.  

(c) Suppose now that $z'_1$ is matched with a server $z''_{j_1}$ that is matched in $\Mlcfs(z'')$. 
We proceed as for {\sc fcfm}, by constructing the new corresponding matchings $\left(z'_1,z''_{j_1}\right)$, $\left(z''_{i_1},z''_{j_2}\right)$, $\left(z''_{i_2},z''_{j_3}\right)$, 
and so on, until we reach the same conclusion as for {\sc fcfm} (with the only difference that in {\sc lcfm} the indexes $i_1,i_2,...$ and $j_1,j_2,...$ are not necessarily ordered increasingly).   
Therefore, at Step I we reach the same conclusions as for {\sc fcfm}.

{\bf Step II:} The construction for {\sc fcfm} remains valid for any admissible policy, such as {\sc lcfm}.   
\end{proof}

\begin{ex}[Example \ref{ex:MS} continued: {\sc ms} is not even sub-additive]
\label{ex:MSbis}
We saw in example \ref{ex:MS} that the matching policy {\sc ms} is not non-expansive. As a matter of fact, it is not even sub-additive. 
Indeed, take again the graph of Figure \ref{Fig:paw} as a compatibility graph. Letting $z'=11$ and $z''=133224$, we immediately obtain that  
for all $\zeta'$, $\zeta''$, $\left|W_{\textsc{ms}}(z'z'',\varsigma'\varsigma'')\right| =4$ and $\left|W_{\textsc{ms}}(z',\varsigma')\right|+\left|W_{\textsc{ms}}(z'',\varsigma'')\right|=2,$ see Figure \ref{fig:example2bis}. 
\end{ex}

\begin{figure}[h!]
\begin{center}
\begin{tikzpicture}[scale=0.7]
\draw[-] (-1,1) -- (8,1);
\fill (0,1) circle (2pt) node[below] {\small{1}};
\fill (1,1) circle (2pt) node[below] {\small{1}};
\draw[-,very thick] (1.45,0.6) -- (1.45,1.4);
\draw[-,very thick] (1.55,0.6) -- (1.55,1.4);
\fill (2,1) circle (2pt) node[below] {\small{1}};
\fill (3,1) circle (2pt) node[below] {\small{3}};
\fill (4,1) circle (2pt) node[below] {\small{3}};
\draw[-, thin] (4,1) .. controls +(up:0.5cm)  .. (7,1);
\fill (5,1) circle (2pt) node[below] {\small{2}};
\draw[-, thin] (2,1) .. controls +(up:0.5cm)  .. (5,1);
\fill (6,1) circle (2pt) node[below] {\small{2}};
\draw[-, thin] (3,1) .. controls +(up:0.5cm)  .. (6,1);
\fill (7,1) circle (2pt) node[below] {\small{4}};
\draw[-] (-1,-0.5) -- (8,-0.5);
\fill (0,-0.5) circle (2pt) node[below] {\small{1}};
\fill (1,-0.5) circle (2pt) node[below] {\small{1}};
\fill (2,-0.5) circle (2pt) node[below] {\small{1}};
\fill (3,-0.5) circle (2pt) node[below] {\small{3}};
\fill (4,-0.5) circle (2pt) node[below] {\small{3}};
\draw[-, thin] (4,-0.5) .. controls +(up:0.5cm)  .. (6,-0.5);
\fill (5,-0.5) circle (2pt) node[below] {\small{2}};
\draw[-, thin] (3,-0.5) .. controls +(up:0.5cm)  .. (5,-0.5);
\fill (6,-0.5) circle (2pt) node[below] {\small{2}};
\fill (7,-0.5) circle (2pt) node[below] {\small{4}};
\end{tikzpicture}
\caption[smallcaption]{'Match the Shortest' is not sub-additive.} 
\label{fig:example2bis}
\end{center}
\end{figure}
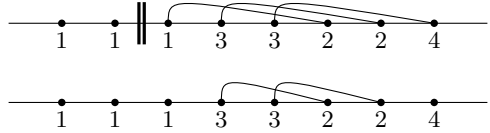

\section{Coupling from the past}
\label{sec:coupling}
The stationary state of matching models is in general, not known in closed form. 
The only remarkable exception is the case where $\phi$ is `First Come, First Matched', for which it is shown that the stationary distribution can be given in closed form, see Theorem 1 in \cite{MBM21}. 
But, as observed in a similar context in Section 5.4 of \cite{ABMW17}, the computation of the normalizing constant in the latter form can be intractable for compatibility graphs that have more than a few nodes and not many edges. Hence the need for alternative techniques to compute, or at least simulate, the stationary state of the system. 
As is well known, strong backwards coupling convergence (in the sense - specified below - of Borovkov and Foss) can lead to a perfect simulation algorithm of the equilibrium, by sampling values of the Markov chain under consideration (here, the buffer-content process $\suite{W_n}$) at the coalescence time, see Section \ref{sec:PW}. 
This Section is devoted to the construction of the steady state of the system, using strong backwards coupling convergence. 
As will be seen below, these coupling results will also guarantee in many cases, the existence of a unique stationary buffer content, and thereby, of a unique bi-infinite stationary complete matching, in a sense that will be specified in Section \ref{sec:matching}. 

The argument will be more easily developed in the ergodic-theoretical framework that we introduce in Sub-section \ref{subsec:statergo}. Our main coupling result, Theorem \ref{thm:main}, is stated in Sub-section \ref{subsec:main}. 
Subsections \ref{subsec:erase} (with the introduction of the useful concept of {\em erasing words}) and \ref{subsec:proofmain} are then devoted to the proof of Theorem \ref{thm:main}.

\subsection{Framework}
\label{subsec:statergo}
The general matching model is intrinsically periodic: arrivals are simple but departure are pairwise, so the size of the system has the parity of the original size at all even times - in particular a system started empty can possibly be empty only every other time, and two systems cannot couple unless their initial sizes have almost surely the same parity. 
To circumvent this difficulty, at first we track the system only at even times. Equivalently, we change the time scale and see the arrivals by {\em pairs} of items (as in the original bipartite matching model \cite{CKW09,BGMa12,ABMW17}) which play different roles: the first one investigates first all possible matchings in the buffer upon its arrival according to $\phi$, before possibly considering the second one if no match is available, whereas the second one applies $\phi$ to all available items, including the first one. By doing so, it is immediate to observe that we obtain exactly a GM model as presented thus far, only 
do we track it at even times. 
  
To formalize the above observation, we let $\suite{U_n}$ be the buffer content sequence at even times (we will use the term "{\em even} buffer content"), that is, $U_n=W_{2n}$, $n\in\N$. 
We will primarily construct a stationary version of the sequence $\suite{U_n}$ of even buffer contents, by coupling. 
For this, we work on the canonical space $\Omega^0:=\left(\maV\times\mathcal S\times\maV\times\mathcal S\right)^\mathbb Z$ 
of the bi-infinite sequence $\suitez{\left(V_{2_n},\Sigma_{2n},V_{2n+1},\Sigma_{2n+1}\right)}$, 
on which we define the bijective shift operator $\theta$ by $\theta\left((\omega_n)_{n\in\mathbb Z}\right)= (\omega_{n+1})_{n\in\mathbb Z}$ for all $(\omega_n)_{n\in \mathbb Z} \in \Omega$. 
We denote by $\theta^{-1}$ the reciprocal operator of $\theta$, and by $\theta^n$ and $\theta^{-n}$ the $n$-th iterated of 
$\theta$ and $\theta^{-1}$, respectively, for all $n\in\N$. We equip $\Omega^0$ with a sigma-algebra $\mathscr F^0$ and with the image probability measure 
$\bp$ of the sequence $\suitez{\left(V_{2_n},\Sigma_{2n},V_{2n+1},\Sigma_{2n+1}\right)}$ on $\Omega^0$. 
Observe that under the IID assumptions for the input, $\bp$ is compatible with the shift, i.e. 
for any $\maA \in \mathscr F^0$, $\bpr{\maA}=\bpr{\theta^{-1}\maA}$ and any $\theta$-invariant event $\maB$ ({\em i.e.} such that $\maB=\theta^{-1}\maB$) is either $\bp$-negligible or almost sure.  
Altogether, the quadruple $\mathscr Q^0:=\left(\Omega^0,\mathscr F^0,\bp,\theta\right)$ is thus stationary ergodic, and will be referred to as {\em Canonical space} of the input at even times. 
For more details about this framework, we refer the reader to the monographs \cite{BranFranLis90},
\cite{BacBre02} (Sections 2.1 and 2.5) and \cite{Rob03} (Chapter 7). 

We define the r.v. $\left(V^0,\Sigma^0,V^1,\Sigma^1\right)$ by $\left(V^0,\Sigma^0,V^1,\Sigma^1\right)(\omega_n)=\left(v_{0},s_{0},v_{1},s_{1}\right)$, 
for any $\suitez{\omega_n}:=\suitez{\left(v_{2_n},s_{2n},v_{2n+1},s_{2n+1}\right)}$ in $\Omega^0$.  
Thus $\left(V^0,\Sigma^0,V^1,\Sigma^1\right)$ can be interpreted as the input brought to the system at time 0, i.e. at 0 an item of class $V^0$ and then an item of class $V^1$ enter the system, having respective lists of preferences $\Sigma^0$ and $\Sigma^1$ over 
$\mathcal V$, and the order of arrival between the two is kept track of ($V^0$ and {\em then} $V^1$). 
Then for any $n\in \mathbb Z$, the r.v. $\left(V^0\circ\theta^n,\,\Sigma^0\circ\theta^n,\,V^1\circ\theta^n,\,\Sigma^1\circ\theta^n\right)$ corresponds to the input brought to the system at time $n$. 
Define the following subsets of $\mathbb W$, 
\begin{align}
\mathbb W_2 &= \left\{w \in \mathbb W\,:\,|w|\mbox{ is even }\right\};\nonumber\\
\mathbb W_2(r) &= \left\{w \in \mathbb W_2\,:\,|w|\le 2r\right\},\,r \in \N_+. \label{eq:defW2r}
\end{align}
For any $\mathbb W_2$-valued r.v. $Y$, we define on $\Omega^0$ the sequence $\suite{U^{\{Y\}}_n}$ as the even buffer content sequence of the model initiated at value $Y$, i.e.  
\begin{equation}
\label{eq:recurUbar}
\left\{\begin{array}{ll}
U^{\{Y\}}_0 &= Y;\\
U^{\{Y\}}_{n+1} &= \left(U^{\{Y\}}_{n} \odot_\phi (V^0\circ\theta^n,\Sigma^0\circ\theta^n)\right)\odot_\phi (V^1\circ\theta^n,\Sigma^1\circ\theta^n),\,n\in\N.
\end{array}\right.
\end{equation} 
A stationary version of (\ref{eq:recurUbar}) is thus a recursion satisfying (\ref{eq:recurUbar}) and compatible with the shift, i.e. 
a sequence $\suitez{U\circ\theta^n}$, where the $\mathbb W_2$-valued r.v. $U$ satisfies the functional equation  
\begin{equation}
\label{eq:recurstatUbar}
U\circ\theta = \left(U \odot_\phi (V^0,\Sigma^0)\right) \odot_\phi (V^1,\Sigma^1),
\end{equation}
see Section 2.1 of \cite{BacBre02} for details. 
To any stationary even buffer content $\suitez{U\circ\theta^n}$ corresponds a unique stationary probability for the sequence $\suite{W_{2n}}$ on the original probability space 
$(\Omega,\mathcal F,\mathbb P)$. 
Moreover, as will be shown in Section \ref{sec:matching}, provided that $\bpr{U=\emptyset}>0$ the bi-infinite sequence $\suitez{U\circ\theta^n}$ corresponds on $\mathscr Q^0$ to a unique stationary matching by $\phi$ (we write a $\phi$-{\em matching}), 
that is obtained by using the bi-infinite family of construction points $\left\{n \in \Z\,:\,U \circ\theta^n =\emptyset\right\}$, and matching the incoming items 
by $\phi$, within each finite block between construction points.

\subsection{Main result}
\label{subsec:main}
It follows from the discussion above that the construction of a stationary buffer-content at even times and thereby, of a stationary $\phi$-matching on $\mathbb Z$, amounts to solving on $\mathscr Q^0$ the almost-sure equation (\ref{eq:recurstatUbar}). This will be done by constructing the associated {backwards scheme}, as in \cite{Loynes62}:  for a $\mathbb W_2$-valued r.v. $Y$ and any fixed $n \ge 0$, the r.v. $U^{\{Y\}}_n\circ\theta^{-n}$ represents the even buffer content at time 0, whenever initiated at value $Y$, $n$ time epochs in the past. 
Loynes' theorem \cite{Loynes62} shows the existence of a solution to (\ref{eq:recurstatUbar}), as the $\bp$-almost sure limit of the non-decreasing sequence $\suite{U^{\{\emptyset\}}_n\circ\theta^{-n}}$, whenever 
the random map driving the recursion $\suite{U_n}$ is almost surely non-decreasing in the state variable. In the absence of a clear monotonicity in the recursion dynamics, we resort instead to Borovkov's and Foss theory of Renovation, see \cite{Foss92,Foss94}. 

Following \cite{Bor84}, we say that the buffer content sequence $\suite{U^{\{Y\}}_n}$ converges with {\em strong backwards coupling} to the stationary buffer content sequence 
$\suite{U\circ\theta^n}$ if, $\bp$-almost surely there exists a finite $N^*\ge 0$ such that for all $n \ge N^*$, $U^{\{Y\}}_{n}=U$. Note that strong backwards coupling implies the (forward) coupling between $\suite{U^{\{Y\}}_n}$ and $\suite{U\circ\theta^n}$, i.e.  
there exists a.s. an integer $N\ge 0$ such that $U^{\{Y\}}_{n}=U\circ\theta^n$ for all $n \ge N$. In particular the distribution of $U^{\{Y\}}_{n}$ converges in total variation to that of $U$, see e.g. Section 2.4 of \cite{BacBre02}. 

\medskip

Denote for any $\mathbb W_2$-valued r.v. $Y$ and any $j\in\mathbb N^*$, by $\tau_j(Y)$ the $j$-th visit time to $\emptystate$ (or return time if $Y\equiv \emptyset$) 
for the process $\suite{U_n^{\{Y\}}}$, that is 
$$\tau_1(Y) := \inf \left\{n > 0, U_{n}^{\{Y\}} = \emptystate \right\}, \quad \tau_j(Y) := \inf \left\{n > \tau_{j-1}(Y), U_{n}^{\{Y\}} = \emptystate \right\}, \; j\geq 2.$$ 
The stability of the system is characterized by the following condition depending on the initial condition $Y$,  
\begin{itemize}
\item[\textbf{(H1)}] The stopping time $\tau_1(Y)$ is integrable.
\end{itemize}
We are ready to state our main coupling result. 
\begin{theorem}
\label{thm:main}
If the policy $\phi$ is sub-additive and assumption (H1) holds, there exists a unique solution $U$ to (\ref{eq:recurstatUbar}) in $\mathscr Y_2^{\infty}$, to which all sequences 
$\suite{U^{\{Y\}}_n}$, for $Y \in \mathscr Y_2^{\infty}$, converge with strong backwards coupling. 
\end{theorem}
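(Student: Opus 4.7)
The plan is to construct the stationary even buffer content via a Loynes--Borovkov backward scheme on the canonical space $\mathscr Q^0$, and turn the returns to $\emptystate$ supplied by \textbf{(H1)} into renovation epochs by means of sub-additivity. I first study the base scheme $\bar U_n := U^{\{\emptystate\}}_n \circ \theta^{-n}$. Under \textbf{(H1)} applied to $Y=\emptystate$, the forward process $\suite{U_n^{\{\emptystate\}}}$ has an integrable first return to $\emptystate$, and so, by the stationarity and ergodicity of $\mathscr Q^0$, it visits $\emptystate$ infinitely often $\bp$-a.s., at times $\tau_1(\emptystate) < \tau_2(\emptystate) < \cdots$ that form a stationary renewal-like sequence.

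\textbf{Coupling through sub-additivity.} Fix $Y \in \mathscr Y_2^\infty$. Since $\phi$ is admissible, one has $U^{\{Y\}}_n = W_\phi(Y\, Z_n,\, \Xi_n)$ and $U^{\{\emptystate\}}_n = W_\phi(Z_n, \Xi_n)$, where $(Z_n, \Xi_n)$ is the concatenation of the $2n$ arrivals and preference lists between times $0$ and $n-1$ in the even-step encoding (\ref{eq:recurUbar}). Sub-additivity of $\phi$ (Definition \ref{def:subadd}) then yields $|U^{\{Y\}}_n| \leq |Y| + |U^{\{\emptystate\}}_n|$. In particular, at each visit $\tau_j(\emptystate)$ of the base process to $\emptystate$, the buffer started at $Y$ contains at most $|Y|$ items, and the perturbation is quantitatively absorbed. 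Coupling this bound with the erasing-word construction of Subsection \ref{subsec:erase} applied across consecutive cycles of the base process, I expect that this residual content is $\bp$-a.s. flushed after a $\bp$-a.s.-finite number of cycles, yielding a coupling time $T(Y) < \infty$ beyond which $U^{\{Y\}}_n = U^{\{\emptystate\}}_n$.

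\textbf{Backward coupling and fixed-point equation.} Translating this forward coincidence into the backward scheme via $\theta^{-n}$, together with the ergodic stationarity of the input, produces a $\bp$-a.s.-finite integer $N^*$ such that $U^{\{Y\}}_n \circ \theta^{-n} = \bar U_n$ for every $n \geq N^*$, and such that $\bar U_n$ is itself constant for $n \geq N^*$. Calling $U$ that $\bp$-a.s. limit, I obtain strong backward coupling to $U$. Applying $\theta$ to the equality $U^{\{\emptystate\}}_n \circ \theta^{-n} = U$ for large $n$ and invoking (\ref{eq:recurUbar}) then gives the functional equation (\ref{eq:recurstatUbar}). Uniqueness is immediate from the same coupling argument: any other solution $U' \in \mathscr Y_2^\infty$ satisfies $U' \circ \theta^n = U^{\{U'\}}_n$ by the fixed-point equation, and the strong backward coupling forces $U' = U$ $\bp$-a.s.

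\textbf{Main obstacle.} The dynamics $\odot_\phi$ are not monotone for any natural order on $\mathbb W$ --- Example \ref{ex:fcfmlcfm} rules out a componentwise comparison and the counterexample to non-expansiveness of \textsc{ms} shows how delicate the situation is --- so the classical Loynes monotonicity argument cannot be invoked. The real work therefore lies in Step 2: the size bound $|U^{\{Y\}}_n| \leq |Y| + |U^{\{\emptystate\}}_n|$ coming from sub-additivity does not by itself close the gap between the two trajectories, and promoting this quantitative $\ell_1$-control into a pathwise coincidence requires the erasing-word machinery of Subsection \ref{subsec:erase}. That is the single technical step on which the whole argument hinges.
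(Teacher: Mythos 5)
Your proposal assembles the right ingredients --- sub-additivity to bound $|U^{\{Y\}}_n|$ by $|Y|+|U^{\{\emptyset\}}_n|$ at the visits of the base process to $\emptyset$, erasing words to flush the residual content, and the iid structure to make the flushing happen with probability bounded away from zero --- and up to the choice of which process's return times to cycle on, this is the same forward-coupling mechanism the paper uses. The decisive step, however, is exactly where you wave your hands: ``translating this forward coincidence into the backward scheme via $\theta^{-n}$'' is not a legitimate move. Strong backwards coupling implies forward coupling, but the converse fails in general (the paper itself only records the easy direction), and in the absence of Loynes monotonicity --- which you correctly note is unavailable --- nothing in your argument shows that the backward sequence $\bar U_n = U^{\{\emptyset\}}_n\circ\theta^{-n}$ is eventually constant. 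Forward coalescence of $U^{\{Y\}}_n$ with $U^{\{\emptyset\}}_n$ on one copy of the input says nothing about the family of trajectories started at the shifted times $-n$, which is what backward stabilization requires.

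What the paper does at this point is verify the hypothesis of the Borovkov--Foss renovation theorem (Theorem 2.5.3 of \cite{BacBre02}) for the non-stationary renovating events $\maA_n(Y)=\{U^{\{Y\}}_n=\emptyset\}$: Proposition \ref{pro:bwiid} shows that
$\bpr{\bigcap_{k=0}^{\infty}\bigcup_{l=0}^{n}\maA_{l}(Y)\cap\theta^{k}\maA_{l+k}(Y)}\to 1$,
i.e.\ that with high probability there is a \emph{common} renovation epoch $l\le n$ working simultaneously for the process and all of its shifts $\theta^k$, $k\ge 0$. Securing that intersection over all $k$ is the real content of the proof: it is obtained by combining the a.s.\ finiteness of $\kappa=\sup\{k:\tau_1(Y)\circ\theta^{-k}>k\}$ (which reduces the intersection to finitely many shifts), the sub-additive bound $|U^{\{Y\}}_{\tilde\tau_i+k}\circ\theta^{-k}|\le H_\epsilon$ at the spaced return times, and the uniform lower bound $\beta_\epsilon$ on the probability that the next block of arrivals is an erasing word of whatever word of length at most $H_\epsilon$ is then in the buffer. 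Only after this does the renovation theorem deliver the strong backwards coupling, the limit $U$, and the functional equation (\ref{eq:recurstatUbar}); uniqueness then follows from the forward coupling of Proposition \ref{pro:fwiid} essentially as you describe. So your outline is not wrong in spirit, but it is missing the one argument --- the uniform-in-$k$ renovation structure --- on which the passage from forward to backward coupling actually rests.
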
 
At this point, it is useful to provide a list of simple cases in which Theorem \ref{thm:main} applies. 
It is well know that the following set of measures plays a key role in the stability of the system at hand 
(see e.g. the survey in Section 3 of \cite{BMM23}): For any compatibility graph $G=(\maV,\maE)$,  
\begin{equation}
\label{eq:Ncond}
\textsc{Ncond}(G)=\left\{\mu \in \mathscr M(\maV)\,:\,\mu(\maI) < \mu(\maE(\maI)),\,\mbox{ for all independent sets }\maI \mbox{ of }G\right\}.
\end{equation}
Observe that the Markov chain $\suite{U_n}$ is clearly irreducible on $\mathbb W_2$. 
So (H1) holds true whenever the chain is positive recurrent. Therefore, applying Theorem \ref{thm:main} of \cite{MBM21} for {\sc fcfm}, and Theorem 2 of \cite{MaiMoy16}, we obtain the following list of sufficient conditions for (H1),

\begin{proposition}
\label{prop:suffH3}
Condition (H1) holds true for any $\mathbb W_2$-valued initial condition $Y$, 
whenever $G$ is non-bipartite, $\mu\in\textsc{Ncond}(G)$, and in either one of the following cases:  
\begin{enumerate}
\item $\phi=\textsc{fcfm}$; 
\item $\phi=\textsc{ml}$; 
\item $\phi$ is any admissible policy and $G$ is complete $p$-partite for $p\ge 3$. 
\end{enumerate}
\end{proposition}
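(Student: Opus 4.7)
The plan is to reduce condition (H1) to positive recurrence of the even-time Markov chain $\suite{U_n}$ on $\mathbb W_2$, and then to invoke the existing stability results quoted in the introduction for each of the three cases.

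First, I would establish that $\suite{U_n}$ is an irreducible Markov chain on $\mathbb W_2$. Since $\mu$ has full support on $\maV$ and $G$ is non-bipartite (so that it contains an odd cycle), every state $w \in \mathbb W_2$ communicates with $\emptyset$: using an odd cycle of $G$ and the letters appearing in $w$, one can exhibit a finite deterministic sequence of pairs of arrivals that matches all pending items together with these new arrivals, and this sequence occurs with positive probability under $\mu$. Irreducibility being in place, standard Markov chain theory gives that $\mathbb E[\tau_1(\emptyset)] < \infty$ is equivalent to positive recurrence; the strong Markov property together with irreducibility then yields integrability of $\tau_1(Y)$ for arbitrary $Y \in \mathbb W_2$ via a first-passage decomposition through $\emptyset$.

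Second, I would transfer positive recurrence from the single-time buffer-content chain $\suite{W_n}$ on $\mathbb W$ (as given by the cited references) down to the even-time restriction $\suite{U_n}$ on $\mathbb W_2$. The sequence $\suite{W_n}$ has period $2$ on $\mathbb W$ because arrivals are single while matchings remove pairs, so $\mathbb W$ decomposes into two cyclic classes indexed by parity, namely $\mathbb W_2$ and $\mathbb W\setminus\mathbb W_2$; the restriction of $\suite{W_{2n}}$ to $\mathbb W_2$ is aperiodic and inherits positive recurrence from $\suite{W_n}$. Then I would appeal case by case to the cited stability results: for $\phi=\textsc{fcfm}$, Theorem 1 of \cite{MBM21} furnishes an explicit product-form stationary probability for $\suite{W_n}$ whenever $G$ is non-bipartite and $\mu\in\textsc{Ncond}(G)$, which implies positive recurrence; for $\phi=\textsc{ml}$, Theorem 2 of \cite{MaiMoy16} asserts that \textsc{ml} has a maximal stability region on any non-bipartite $G$, hence positive recurrence throughout $\textsc{Ncond}(G)$; and for any admissible $\phi$ on a complete $p$-partite graph with $p\ge 3$, the same Theorem 2 of \cite{MaiMoy16} guarantees the maximal stability region.

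The main obstacle is not in any of the three cases individually, as each is essentially a citation, but in the bookkeeping of parities that connects the two time-scales: one must verify that $\suite{U_n}$ is genuinely irreducible on $\mathbb W_2$ (which fails for bipartite $G$, and explains why non-bipartiteness is assumed) and that positive recurrence of the single-time chain transfers cleanly to the double-time restricted chain, so that (H1) holds uniformly over initial conditions $Y\in\mathbb W_2$ and not merely for $Y\equiv\emptyset$.
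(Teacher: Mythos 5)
Your proposal is correct and follows essentially the same route as the paper, which likewise observes that $\suite{U_n}$ is irreducible on $\mathbb W_2$, reduces (H1) to positive recurrence, and then invokes the product-form result of \cite{MBM21} for \textsc{fcfm} and Theorem 2 of \cite{MaiMoy16} for \textsc{ml} and for complete $p$-partite graphs. The additional detail you supply (the odd-cycle argument for irreducibility and the parity bookkeeping between the single-arrival and even-time chains) is exactly what the paper leaves implicit behind the word ``clearly''.
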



Theorem \ref{thm:main} is proved in Sub-section \ref{subsec:proofmain}. For this, we first need to introduce the notion of erasing words. 

\subsection{(Strong) Erasing words}
\label{subsec:erase}
\begin{definition}
Let $G=(\maV,\maE)$ be a connected graph, and $\phi$ be an admissible matching policy. Let $u \in \mathbb W_2$. 
We say that the word $z\in \maV^*$ is an {\em erasing word} of $u$ for $(G,\phi)$ if $|z|$ is even and 
for any two words $\varsigma'$ and $\varsigma$ possibly drawn by $\nu_\phi$ on $\mathcal S^*$ and  having respectively the same size as $z$ and $u$, we have that 
\begin{equation}
\label{eq:deferase}
W_\phi\left(z,\varsigma'\right)=\emptyset\quad \quad \mbox{ and }\quad\quad W_\phi\left(uz,\varsigma\varsigma'\right)=\emptyset.
\end{equation}
\end{definition}
In other words, an erasing word of $u$ has the twofold property of being perfectly matchable by $\phi$ alone, and together with $u$. 
The following proposition guarantees the existence of erasing words for any stabilizable graph and any sub-additive policy. 
\begin{proposition}
\label{pro:erasing}
Let $G$ be a non-bipartite graph and $\phi$ be a sub-additive matching policy. Then any word $u\in \mathbb W_2$ admits an erasing 
word for $(G,\phi)$. 
\end{proposition}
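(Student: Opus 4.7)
The plan is to construct the erasing word $z$ iteratively, combining the irreducibility of the Markov chain $\suite{U_n}$ on $\mathbb W_2$ (as observed in the paper just before Proposition \ref{prop:suffH3}) with the sub-additivity of $\phi$.

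First, by irreducibility applied at the state $u$, there exists a finite word $y_0 \in \mathcal V^*$ of even length with $W_\phi(u\, y_0) = \emptyset$ for at least one preference sequence drawn from $\nu_\phi$. To meet the stronger requirement of the definition (emptiness for every admissible $\varsigma, \varsigma'$), I would arrange the construction so that each incoming letter of $y_0$ has a unique compatible item in the current buffer; this makes $\phi$'s decision forced and independent of the preference sample. Such a ``forcing'' $y_0$ can be built using the connectedness of $G$. Setting $w_0 := W_\phi(y_0) \in \mathbb W_2$, if $w_0 = \emptyset$ then $y_0$ is already an erasing word for $u$.

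Otherwise, I would iterate: for $k \ge 1$, by irreducibility applied at $w_{k-1} \in \mathbb W_2 \setminus \{\emptyset\}$, pick a forcing $y_k$ such that $W_\phi(w_{k-1}\, y_k) = \emptyset$, and set $w_k := W_\phi(y_k)$. Writing $z_k := y_0 y_1 \cdots y_k$, the Markov property yields
\[
W_\phi(z_k) = W_\phi(w_{k-1}\, y_k) = \emptyset \quad\text{and}\quad W_\phi(u\, z_k) = W_\phi(y_k) = w_k,
\]
so $z_k$ is an erasing word precisely when $w_k = \emptyset$.

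The main obstacle will be showing termination of this iteration. Sub-additivity provides the crucial a priori control: by choosing $y_k$ of minimal size, one can ensure $|w_k| \le |w_{k-1}|$, since each item of $w_{k-1}$ absorbs at most one item of $y_k$ into a cross-match. Hence the residuals $(w_k)$ take values in the finite set $\mathbb W_2 \cap \{w : |w| \le |u|\}$. To rule out a plateau $|w_k| = c > 0$, I would invoke the non-bipartite structure of $G$: the existence of an odd cycle provides a ``parity-switching'' self-matching component that, inserted into $y_k$, strictly contracts the residual and thus forces $w_k = \emptyset$ after finitely many iterations. The delicate step, which I expect to be the crux, is making this odd-cycle insertion rigorous for a general sub-additive $\phi$ while preserving the forced-match property that guarantees independence from $\nu_\phi$.
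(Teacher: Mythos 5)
Your overall strategy (deplete $u$ by feeding compatible items, then repair the ``standalone'' matching of the appended word via the odd cycle, iterating until both matchings are perfect) has the right flavour, but two of its load-bearing steps do not hold as stated. First, the monotonicity $|w_k|\le|w_{k-1}|$ does not follow from sub-additivity: applied to the decomposition $w_{k-1}\cdot y_k$, sub-additivity only gives $0=|W_\phi(w_{k-1}y_k)|\le |w_{k-1}|+|W_\phi(y_k)|$, which is vacuous, and there is no super-additive counterpart bounding $|w_k|=|W_\phi(y_k)|$ in terms of $|w_{k-1}|$. A concrete plateau: if $w_{k-1}=i\cdots i$ and $y_k=j\cdots j$ for some $j\v i$, then $W_\phi(w_{k-1}y_k)=\emptyset$ while $W_\phi(y_k)=y_k$ has the same length as $w_{k-1}$, and the resulting ping-pong between $i$'s and $j$'s never terminates without a parity-breaking insertion. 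Second, that insertion via the odd cycle --- which you correctly identify as the crux --- is precisely the non-trivial content of the proposition, and your proposal leaves it entirely unproven; the remaining ingredients (irreducibility, ``forcing'' inputs whose matches do not depend on the preference draw) are comparatively routine.

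For comparison, the paper avoids the ping-pong altogether. It first constructs, for every two-letter word $ij$ with $i\pv j$, an explicit erasing word: the interior of a shortest path from $i$ to $j$ settles the odd-distance case, and in the even-distance case the leftover doubled endpoint is absorbed by a doubled path leading to an induced odd cycle traversed once; because only shortest paths and induced cycles are used, each arrival has a unique compatible class present, so all matches are forced and independent of $\nu_\phi$. Sub-additivity then enters in a direction opposite to yours: writing $u=u_1\cdots u_{2r}$ and letting $z^1$ erase $u_{2r-1}u_{2r}$, the split $uz^1=(u_1\cdots u_{2r-2})(u_{2r-1}u_{2r}z^1)$ yields $|W_\phi(uz^1)|\le |u|-2$, a strict decrease of the residual at every step; since each $z^i$ is perfectly matchable on its own, so is the concatenation $z^1\cdots z^\ell$, and the induction terminates in at most $|u|/2$ steps. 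To salvage your approach you would need to (i) replace the unsupported monotonicity claim by a splitting argument of this kind, and (ii) actually carry out the odd-cycle construction for a general sub-additive $\phi$.
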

Proposition \ref{pro:erasing} is proven in Section \ref{sec:appendixA}.

Clearly, uniqueness of the erasing words does not hold true. In particular, if $z^1$ and $z^2$ are both erasing words of the same word $u$ for $(G,\phi)$, then $z^1z^2$ also is. Hence the following, 
\begin{definition}
Let $u \in \mathbb W_2$. An erasing word $z$ of $u$ for $(G,\phi)$ is said to be {\em reduced}, if $z$ cannot be written as $z=z^1z^2$, where $z^1$ and $z^2$ are both non-empty erasing words of $u$.  A reduced erasing word $z$ of $u$ is said to be {\em minimal}, if it is of minimal length among all reduced erasing words of $u$. 
\end{definition}

\medskip 

To show our perfect simulation result, we will also need to strengthen the concept of erasing word. 
\begin{definition}
\label{def:strongerase}
Let $C\in\N_+$. A word $z \in \maV^*$ of even length $2p$ is said to be a $2C$-{\em strong erasing word} of the graph $G=(\maV,\maE)$ and the matching policy $\phi$ if 
\begin{enumerate}
\item[(i)] $z$ is completely matchable by $\phi$ together with any word of $\W_{2}(C)$, i.e. for any $w\in\W_{2}(C)$ and any two words $\varsigma$ and $\varsigma'$ of $\mathcal S^*$ whose letters can be possibly drawn by $\nu_\phi$ and of respective length $|w|$ and $2p$, we have that $W_\phi\left(wz,\varsigma\varsigma'\right)=\emptyset$; 
\item[(ii)] for any even prefix $\breve z$ of $z$ of length $2r$, for any $w\in\W_{2}(C)$ and any two words $\varsigma$ and $\breve\varsigma'$ of $\mathcal S^*$ whose letters can be possibly drawn by $\nu_\phi$ and of respective length $|w|$ and $2r$, we have that 
$W_\phi\left(w\breve z,\varsigma\breve\varsigma'\right) \le W_\phi\left(w,\varsigma\right)$. 
\end{enumerate}
\end{definition}
In other words, (i) a $2C$-strong erasing word of $(G,\phi)$ is perfectly matchable by $\phi$ alone, and together with any buffer content of size less or equal than $2C$ and (ii), the corresponding input never leads to an increase of the buffer size before depleting the system. In particular, plainly, a $2C$-strong erasing word for $(G,\phi)$ is a an erasing word for any $w\in \mathbb W_2(C)$. 

\begin{definition}
An $2q$-strong erasing word $z$ for $(G,\phi)$ is said to be reduced, if $z$ cannot be written as $z=z^1z^2$, where $z^1$ and $z^2$ are both non-empty $2C$-strong erasing words.  
\end{definition}

We start by observing the following composition rule regarding sub-additive policies, 
\begin{lemma}
\label{lemma:reducestrong}
Let $\phi$ be a sub-additive matching policy on $G$. Then, for any $C\in\N_+$, and any family 
$z^1,\cdots,z^C$ of (possibly equal) $2$-strong erasing words of $(G,\phi)$, the word $z=z^1z^2\cdots z^C$ is a $2C$-strong erasing word of $(G,\phi)$.
\end{lemma}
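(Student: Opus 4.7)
The plan is to exploit sub-additivity of $\phi$ together with a careful decomposition of any buffer content $w \in \W_2(C)$ into size-$2$ chunks, each of which can be absorbed by exactly one of the $z^i$. More precisely, fix $w \in \W_2(C)$ with $|w| = 2k$ for some $0 \le k \le C$, and split $w = w^{(1)} w^{(2)} \cdots w^{(k)}$ where each $w^{(i)}$ is made of two consecutive letters of $w$. Since $w \in \W_2$, the letters of $w$ form an independent set of $G$, and so do those of each chunk; in particular, each $w^{(i)} \in \W_2(1)$ and no internal match can occur within $w^{(i)}$.

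To verify property (i) of Definition \ref{def:strongerase} for $z = z^1 z^2 \cdots z^C$, one writes
\[ w z = \bigl(w^{(1)} z^1\bigr) \bigl(w^{(2)} z^2\bigr) \cdots \bigl(w^{(k)} z^k\bigr)\, z^{k+1} \cdots z^C, \]
and applies iterated sub-additivity to obtain
\[ |W_\phi(w z)| \le \sum_{i=1}^{k} |W_\phi(w^{(i)} z^i)| + \sum_{i=k+1}^{C} |W_\phi(z^i)|. \]
Property (i) of each $2$-strong erasing word $z^i$, applied to $w^{(i)} \in \W_2(1)$ (respectively to the empty buffer for $i > k$), makes every term vanish, yielding $W_\phi(w z) = \emptyset$.

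For property (ii), fix an even prefix $\breve z$ of $z$ and write $\breve z = z^1 \cdots z^j \tilde z$, where $0 \le j \le C$ and $\tilde z$ is an even prefix of $z^{j+1}$ (empty when $j = C$). When $j \ge k$, the same chunking gives, by sub-additivity, $|W_\phi(w \breve z)| \le |W_\phi(\tilde z)|$, and property (ii) of $z^{j+1}$ applied with the empty buffer bounds $|W_\phi(\tilde z)| \le 0$. When $j < k$, the decomposition
\[ w \breve z = \bigl(w^{(1)} z^1\bigr) \cdots \bigl(w^{(j)} z^j\bigr) \bigl(w^{(j+1)} \tilde z\bigr)\, w^{(j+2)} \cdots w^{(k)}, \]
together with sub-additivity and property (ii) of $z^{j+1}$ applied to $w^{(j+1)} \in \W_2(1)$, yields $|W_\phi(w \breve z)| \le 2 + 2(k-j-1) = 2(k-j) \le 2k = |W_\phi(w)|$. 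In either case one obtains $|W_\phi(w \breve z)| \le |W_\phi(w)|$, establishing (ii).

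The main obstacle is not any deep estimate but the identification of the right pairing of chunks with $z^i$'s: since each $z^i$ is only a $2$-strong erasing word, it can absorb at most one size-$2$ chunk of $w$. Once this pairing is set up, and the structural observation that $w \in \W_2$ forces each chunk to lie in $\W_2(1)$ is used, sub-additivity does the rest. The only delicate bookkeeping lies in the prefix case analysis of property (ii), where the integer $j$ can fall on either side of $k$.
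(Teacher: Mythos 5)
There is a genuine gap at the very first step of your argument for property (i), and it propagates to (ii). The identity $wz = \bigl(w^{(1)}z^1\bigr)\bigl(w^{(2)}z^2\bigr)\cdots\bigl(w^{(k)}z^k\bigr)z^{k+1}\cdots z^C$ is false as an equality of words: the left-hand side is $w^{(1)}w^{(2)}\cdots w^{(k)}z^1\cdots z^C$ (all of $w$ sits in the buffer before any letter of $z$ arrives), whereas the right-hand side interleaves the chunks of $w$ with the $z^i$'s, which is a different arrival order and hence a different matching. Sub-additivity, as given in Definition \ref{def:subadd}, only bounds $\left|W_\phi(z'z'')\right|$ for a split of the input into a contiguous prefix and suffix; it does not let you pair non-adjacent segments of the input. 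Applying iterated sub-additivity to the genuine contiguous factorization of $wz$ into the blocks $w^{(1)},\dots,w^{(k)},z^1,\dots,z^C$ only yields $\left|W_\phi(wz)\right| \le \sum_i \left|W_\phi(w^{(i)})\right| + \sum_j \left|W_\phi(z^j)\right| = 2k$, which is useless. The difficulty your pairing sweeps under the rug is precisely the crux of the lemma: when $z^1$ starts arriving, the buffer already contains all $2k$ letters of $w$, and a $2$-strong erasing word carries no guarantee against a buffer of size $2k>2$.

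The paper's proof instead argues sequentially, by induction on $C$: writing $w = w^1\cdots w^{C+1}$ and using sub-additivity with the single contiguous split between $w^1$ and $w^2\cdots w^{C+1}z^1\cdots z^C$, the induction hypothesis (that $z^1\cdots z^C$ is a $2C$-strong erasing word, hence empties the buffer started from $w^2\cdots w^{C+1}\in\W_2(C)$) gives $\left|W_\phi(wz^1\cdots z^C)\right|\le \left|W_\phi(w^1)\right| = 2$; the residual buffer is therefore an element of $\W_2(1)$, which the last word $z^{C+1}$ erases via the Markov identity $W_\phi(wz^1\cdots z^{C+1}) = W_\phi\bigl(W_\phi(wz^1\cdots z^C)\,z^{C+1}\bigr)$. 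In other words, each $z^i$ removes (at least) one two-letter block from the buffer, but only after the previous $z^1,\dots,z^{i-1}$ have already done their work---there is no static pairing of $z^i$ with a chunk of $w$. Your case analysis for property (ii) relies on the same invalid interleaving in the case $j<k$, and would need to be redone along the same sequential lines (peeling off $w^{(1)},\dots,w^{(k-j)}$ as a contiguous prefix and applying the induction hypothesis to what remains).
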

\begin{proof}
The arguments of this proof do not depend on the drawn lists of preferences, as long as they are fixed upon arrival. Again, we thus skip this parameter from all notations. It is immediate that assertion (ii) of Definition \ref{def:strongerase} holds for all $C$. We now show that it also the case for (i), and for this we proceed by induction on $C$. 
The property (i) holds by definition for $C=1$. Now suppose that it holds for a given $C$. Then, take a word $w\in\W_{2}(C+1)$ and a family $z^1,\cdots,z^{C+1}$ of $2$-strong erasing words. 
If $w\in\W_{2}(C)$, then $W_\phi(wz^1z^2\cdots z^{C})=\emptyset$ by the induction assumption, and thus 
\[W_\phi\left(wz^1\cdots z^{C}z^{C+1}\right)=W_\phi\left(W_\phi(wz^1\cdots z^{C})z^{C+1}\right)
=W_\phi\left(\emptyset z^{C+1}\right)=\emptyset.\]
Now suppose that $|w|=2(C+1)$. We write $w=w^1w^2\cdots w^{C+1}$, where the $w^k$'s are two-letter words of the form $w^k=ij$ for $i\pv j$. Again, we have that 
\begin{equation}
\label{eq:obel}
W_\phi\left(wz^1\cdots z^{C+1}\right) = W_\phi\left(W_\phi(wz^1\cdots z^{C})z^{C+1}\right)
=W_\phi\left(W_\phi\left(w^1w^2\cdots w^{C+1}z^1\cdots z^{C}\right)z^{C+1}\right).
\end{equation}
But, by the sub-additivity property and the recurrence assumption we get that 
\[|W_\phi\left(w^1w^2\cdots w^{C+1}z^1\cdots z^{C}\right)| \le |W_\phi(w^1)|+|W_\phi\left(w^2\cdots w^{C+1}z^1\cdots z^{C}\right)|=|W_\phi(w^1)|=2,\]
so $W_\phi\left(w^1w^2\cdots w^{C+1}z^1\cdots z^{C}\right)$ is either a two-letter word of the form $ij$ for $i\pv j$, or the empty word. Injecting this in \eqref{eq:obel}, in the first case we obtain that 
\[W_\phi\left(wz^1\cdots z^{C+1}\right) 
=W_\phi\left(ijz^{C+1}\right)=\emptyset,\]
while in the second, we get that 
\[W_\phi\left(wz^1\cdots z^{C+1}\right) 
=W_\phi\left(z^{C+1}\right)=\emptyset,\]
because $z^{C+1}$ is a $2$-strong erasing word. Hence $z^1\cdots z^{C+1}$ is a $2(C+1)$-strong erasing word, which concludes the proof.  

\end{proof}


%
%

 Hereafter we provide a list of cases for which the existence of $2C$-strong erasing words is granted, 

\begin{proposition}
\label{pro:erasing2}
Let $C\in\N_+$. 
The following conditions are sufficient for the existence of a $2C$-strong erasing word for $(G,\phi)$: 
\begin{itemize}
\item[(i)] $G$ is complete $p$-partite for $p\ge 3$ and $\phi$ is any sub-additive policy; 
\item[(ii)] $G$ is non-bipartite, and $\phi=\textsc{lcfm}$;
\item[(iii)] $G$ is an odd cycle, and $\phi=\textsc{fcfm}$;
\item[(iv)] $G$ is the `paw' graph of Figure \ref{Fig:paw}, and $\phi$ is any sub-additive policy. 
\end{itemize}
\end{proposition}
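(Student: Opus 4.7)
The natural plan is to reduce, via Lemma \ref{lemma:reducestrong}, the construction of a $2C$-strong erasing word to that of a $2$-strong one, i.e.\ to the case $C=1$. Indeed, every policy listed in (i)--(iv) is sub-additive (by Corollary \ref{cor:sub1} for random and \textsc{ml}, by Proposition \ref{prop:sub2} for \textsc{fcfm} and \textsc{lcfm}); so once a single $2$-strong erasing word $z^0$ is available, the concatenation of $C$ copies of $z^0$ is a $2C$-strong erasing word. It thus suffices to exhibit a $2$-strong erasing word in each of the four settings.

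In case (i), the complete $p$-partite structure with $p\ge 3$ makes $\mathbb{W}_{2}(1)$ very restricted: since items from distinct parts are pairwise compatible, any non-empty $w\in\mathbb{W}_{2}(1)$ has both letters inside a single part $V_\ell$. Fix representatives $a_1,a_2,a_3$ in three distinct parts; then a short explicit word such as $z^0=a_1a_2a_3a_1a_2a_3$ is verified to be $2$-strong by running, for each of the finitely many choices of $\ell$ and each admissible $\phi$, the dynamics of $z^0$ on top of $w$ and checking that the buffer vanishes while never exceeding $|w|$ on even prefixes. Case (iv) is similar in spirit: the paw graph has four classes and $\mathbb{W}_{2}(1)$ is a small explicit list; for each admissible initial pair one exhibits a short ``correcting'' block, and one obtains $z^0$ by concatenating these blocks while invoking sub-additivity of $\phi$ to control the buffer size all along.

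Cases (ii) and (iii) exploit the rigid dynamics of \textsc{lcfm} and \textsc{fcfm}. Under \textsc{lcfm} on a non-bipartite $G$, the buffer behaves as a stack and each compatible arrival removes the last item in line, so one designs a short prefix that peels off the (at most two) letters of $w$ without temporary buffer growth, and then appends the empty-state erasing word supplied by Proposition \ref{pro:erasing} to reach $\emptyset$. Under \textsc{fcfm} on an odd cycle $C_{2k+1}$, the cyclic structure allows an explicit walk: given any $w=ij\in\mathbb{W}_{2}(1)$, one prescribes arrivals that under the \textsc{fcfm} rule match first $i$ and then $j$ by travelling along consecutive cycle edges; the odd length of the cycle is crucial to close the walk so that the residual buffer empties after $w$ has been processed, and the matching order imposed by \textsc{fcfm} makes the verification of condition (ii) straightforward.

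The main obstacle, common to all four cases, is condition (ii) of Definition \ref{def:strongerase}: the requirement that the buffer size on \emph{every} even prefix be bounded by $|w|$ is strictly stronger than plain matchability, and the erasing words furnished by Proposition \ref{pro:erasing} need not satisfy it. Upgrading them to \emph{strong} erasing words thus forces one to order the arrivals carefully. In (ii)--(iii) this is controlled by the rigid matching rule of the policy; in (i) and (iv) it reduces to a finite but delicate case analysis across the small set $\mathbb{W}_{2}(1)$ and the possible matching policies.
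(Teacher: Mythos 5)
Your reduction to the case $C=1$ via Lemma \ref{lemma:reducestrong} is exactly the paper's first step, and your treatment of case (i) is essentially correct: since every non-empty $w\in\mathbb W_2(1)$ has both letters in a single part, a short word alternating representatives of three distinct parts is verified directly to be $2$-strong (the paper uses a word of length $2p$ with two letters per part, but the mechanism is the same).

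For cases (ii) and (iii), however, there is a genuine gap. A $2$-strong erasing word is a \emph{single} word $z^0$ that must satisfy both conditions of Definition \ref{def:strongerase} simultaneously for \emph{every} $w\in\mathbb W_2(1)$, i.e.\ for every pair $ij$ with $i\pv j$ \emph{and} for $w=\emptyset$. Your sketch constructs, for a \emph{given} $w=ij$, a ``peeling prefix'' (in (ii)) or a ``walk'' (in (iii)) tailored to that $w$; nothing in the argument shows that one word works uniformly over all $w$, and a prefix designed to peel off one pair $ij$ will in general leave residue, or violate the even-prefix bound, when run against a different pair $i'j'$ or against the empty buffer. Appending an erasing word from Proposition \ref{pro:erasing} cannot repair this: those words are specific to one $u$, carry no control on intermediate buffer sizes, and only guarantee final emptiness. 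This uniformity is precisely where the paper's work lies: for (ii) it first proves that any connected non-bipartite graph is spanned by an odd cycle $\mathscr C$ and takes $z^0$ to be $\mathscr C$ traversed \emph{four} times, then runs a case analysis on how $i$ and $j$ attach to $\mathscr C$ (the four traversals are needed because unmatched residue can be pushed forward several times before being absorbed); for (iii) it takes the cycle traversed twice forward and twice backward and analyses the parities of $i$ and $j$. Neither construction, nor any substitute for it, appears in your sketch. For (iv) the paper simply exhibits $z^1=234234$ and checks it by hand; your proposal of concatenating per-$w$ ``correcting blocks'' again does not obviously yield a single word satisfying the prefix condition for all $w$ at once, though here the finite verification could in principle be supplied.
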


\begin{proof}
In view of Proposition \ref{lemma:reducestrong}, as $\phi$ is sub-additive it is enough to check that there exists a $2$-strong erasing word $z^1$ in all cases, since for all $C$, it is then enough to set $z=\underbrace{z^1\cdots z^1}_{C}$ to obtain a $2C$-strong erasing word. 

(i) Suppose that $G$ is complete $p$-partite for $p \ge 3$, and let $\maI_1,...,\maI_p$ be the corresponding maximal independent sets. 
Let $z$ be a word of length $2p$ containing exactly two (possibly identical) letters belonging to each one of the $\maI_i$'s, $i=1,...,p$, but such each letter of odd index and the immediate succeeding letter are of two different maximal independent sets 
$\maI_j$ and $\maI_k$, $j \ne k$. Then it is immediate that $z^1$ is a $2$-strong erasing word for $(G,\phi)$ for any admissible $\phi$. 

(ii) The proof of Assertion (ii) is given in Appendix \ref{sec:appendixB}. 
 
(iii) The proof of Assertion (iii) is given in Appendix \ref{sec:appendixC}. 

(iv) It can be immediately checked by hand that $z^1=234234$ is a $2$-strong erasing word for the paw graph of Figure \ref{Fig:paw}. 
\end{proof}

\begin{remark}\rm
It is immediate that the result of (ii) can be extended to any admissible policy $\phi$ inducing the same choices as {\sc lcfm} on the input $ijz^1$, for any $i\pv j$. This is true in particular for any priority policy such that, in the spanning cycle $\mathscr C$, for any $j\in \llbracket 2,2q+1\rrbracket$, $c_j$ prioritizes $c_{j-1}$ over any other node, and $c_1$ prioritizes $c_{2q+1}$ over any other node. 
\end{remark}
\begin{ex}
\rm
\label{ex:emullcfmsep}
Consider the compatibility graph $G$ on $\maV=\llbracket 1,6 \rrbracket$, represented in Figure \ref{Fig:sep2}. It is immediate that $G$ is complete $3$-partite, of $3$-partition 
$$\maI_1\cup\maI_2\cup\maI_3:=\{1,4\}\cup\{2,5\}\cup\{3,6\}.$$
Then from the proof of (i) above, the word $z^1=121653$ is a strong erasing word for any admissible $\phi$. 
\begin{figure}[h!]
\begin{center}
\begin{tikzpicture}[scale=0.8]
\draw[-] (1,1) -- (2,-0.2);
\draw[-] (1,1) -- (3,0);
\draw[-] (1,1) -- (2,1.2);
\draw[-] (1,1) -- (3,1);
\draw[-] (2,1.2) -- (1,0);
\draw[-] (2,1.2) -- (3,0);
\draw[-] (2,1.2) -- (3,1);
\draw[-] (3,1) -- (1,0);
\draw[-] (3,1) -- (2,-0.2);
\draw[-] (1,0) -- (2,-0.2);
\draw[-] (1,0) -- (3,0);
\draw[-] (2,-0.2) -- (3,0);
\fill (1,1) circle (2pt) node[above] {\small{1}} ;
\fill (2,1.2) circle (2pt) node[above right] {\small{2}} ;
\fill (3,1) circle (2pt) node[above] {\small{3}} ;
\fill (1,0) circle (2pt) node[below] {\small{4}} ;
\fill (2,-0.2) circle (2pt) node[below] {\small{5}} ;
\fill (3,0) circle (2pt) node[below] {\small{6}} ;
\end{tikzpicture}
\caption[smallcaption]{$3$-partite complete compatibility graph.}
\label{Fig:sep2}
\end{center}
\end{figure}
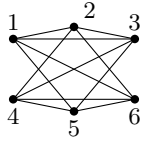
\end{ex}


We believe that the above list of sufficient conditions for the existence of strong erasing is far from exhaustive. 
Checking that given words are strongly erasing for particular graph geometries is a very interesting problem 
involving intricate combinatorial arguments - see the proofs of assertions (ii) and (iii) in Appendix \ref{sec:appendixB} and \ref{sec:appendixC}. We leave this more thorough investigation for future research.

\subsection{Proof of Theorem \ref{thm:main}}
\label{subsec:proofmain} 
Define the following family of events for any $\mathbb W_2$-valued r.v. $Y$,  
\begin{align*}
\maA_n(Y) &=\left\{U^{\{Y\}}_{n}=\emptyset\right\}= \Bigl\{W_\phi(YV^0V^1V^0\circ\theta\,V^1\circ\theta\,...\,V^0\circ\theta^{n-1}V^1\circ\theta^{n-1})=\emptyset\Bigl\},\quad n\ge 0, 
\end{align*}
We first have the following result, 
\begin{proposition}
\label{pro:bwiid}
For any $Y \in \mathscr Y_2^{\infty}$, if (H1) holds, then we have 
\begin{equation}
\label{eq:renov0}
\lim_{n\to\infty} \bpr{\bigcap_{k=0}^{\infty} \bigcup_{l=0}^n \maA_{l}(Y) \cap \theta^k\maA_{l+k}(Y)}=1.
\end{equation}
\end{proposition}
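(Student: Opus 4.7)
My plan is to reduce the infinite intersection to a countable family of individual statements via monotone convergence, and then to prove each one using the regenerative structure of the process $\suite{U^{\{Y\}}_n}$ given by (H1), combined with Birkhoff's ergodic theorem on the canonical stationary ergodic space $\mathscr Q^0$.

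First, I would observe that the sequence $E_n := \bigcap_{k \ge 0}\bigcup_{l=0}^n \maA_l(Y)\cap \theta^k\maA_{l+k}(Y)$ is non-decreasing in $n$, so that by continuity of $\bp$ from below the limit in \eqref{eq:renov0} equals $\bp\bigl(\bigcap_{k \ge 0}\bigcup_{l \ge 0}\maA_l(Y)\cap \theta^k\maA_{l+k}(Y)\bigr)$. Countable sub-additivity applied to the complements then reduces the claim to showing, for each fixed $k \ge 0$,
\begin{equation*}
\bp\Big(\bigcup_{l \ge 0}\maA_l(Y)\cap \theta^k\maA_{l+k}(Y)\Big)=1.
\end{equation*}

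Second, I would exploit (H1) to set up a regenerative structure. Since $\tau := \tau_1(Y) < \infty$ $\bp$-a.s., the strong Markov property applied at $\tau$ together with the i.i.d.\ nature of the input yields $U^{\{Y\}}_{\tau+j}=F_j\circ\theta^\tau$ for all $j \ge 0$, where $F_j := U^{\{\emptyset\}}_j$. The successive return times $(\tau_{j+1}(Y)-\tau_j(Y))_{j \ge 1}$ are therefore i.i.d.\ copies of $\tau_1(\emptyset)$; a coupling with the chain started from $Y$ combined with (H1) yields the integrability of $\tau_1(\emptyset)$. By the strong law of large numbers, the visit set $T:=\{l\ge 0 :\maA_l(Y)\text{ holds}\}$ has positive asymptotic density $1/\E[\tau_1(\emptyset)]>0$ $\bp$-a.s.\ and is in particular unbounded. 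By the shift-invariance of $\bp$, the same regenerative argument applies to the ``shifted'' trajectory $(U^{\{Y\}}_{l+k}\circ\theta^k)_{l}$ defining the event $\theta^k\maA_{l+k}(Y)$: the corresponding set $S_k := \{l \ge 0 :\theta^k\maA_{l+k}(Y)\text{ holds}\}$ is also unbounded with positive density $\bp$-a.s.

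Third, to conclude I would show $T\cap S_k\neq\emptyset$ $\bp$-a.s. Once both trajectories have passed their respective first visits to $\emptyset$, the indicator $l\mapsto\mathbf{1}_T(l)\,\mathbf{1}_{S_k}(l)$ becomes a measurable function of the stationary ergodic input sequence on $\mathscr Q^0$. Applying Birkhoff's ergodic theorem yields the almost sure convergence
\begin{equation*}
\frac{1}{n}\sum_{l=0}^{n-1}\mathbf{1}_T(l)\,\mathbf{1}_{S_k}(l)\ \underset{n\to\infty}{\longrightarrow}\ p_k,
\end{equation*}
where $p_k$ is the stationary probability that both trajectories are simultaneously empty at the appropriate $k$-offset time instants. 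The principal obstacle will be to argue that $p_k>0$: this requires combining the positive stationary probability of $\{\emptyset\}$ for each trajectory individually with a conditioning/coupling argument that exploits the i.i.d.\ structure of the input to preclude perfect anti-correlation of the two events. Once $p_k>0$ is established, $T\cap S_k$ is a.s.\ infinite, and in particular non-empty, which completes the proof.
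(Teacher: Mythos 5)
Your opening reduction already contains a genuine error. Continuity from below gives $\lim_n\bpr{E_n}=\bpr{\bigcup_n E_n}$, and $\bigcup_n E_n$ is the event that a \emph{single} $n$ works simultaneously for every $k$; this is strictly contained in $\bigcap_{k\ge 0}\bigcup_{l\ge 0}\maA_l(Y)\cap\theta^k\maA_{l+k}(Y)$, which only asks that for each $k$ some $l_k$ works, with possibly $\sup_k l_k=\infty$. So proving your per-$k$ statement with probability one does not yield the proposition: the whole difficulty of \eqref{eq:renov0} is the uniformity in $k$ of the common emptiness time. The paper closes exactly this gap by introducing the random variable $\kappa=\sup\{k\in\N:\tau_1(Y)\circ\theta^{-k}>k\}$, a.s.\ finite under (H1): for $k>\kappa$ the trajectory started from $Y$ at time $-k$ has already visited $\emptystate$ by time $0$ and from then on coincides with one of the finitely many trajectories started empty at times $-k'$, $0\le k'\le\kappa$. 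The intersection over infinitely many $k$ is thereby reduced to a finite one, which is what restores a uniform $n$. Your proposal would need an analogous step.

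The second, and central, gap is the positivity $p_k>0$ (equivalently $T\cap S_k\neq\emptyset$ a.s.), which you correctly identify as ``the principal obstacle'' but do not resolve. Two subsets of $\N$ of positive density need not meet, and the two trajectories, although driven by the same input from time $0$ onwards, sit in different states; the regenerative structure of each one separately gives no mechanism forcing them to be empty at the \emph{same} instant. (Note also that Birkhoff's theorem does not apply as stated: $l\mapsto\ind_T(l)\ind_{S_k}(l)$ is not of the form $f\circ\theta^l$ because neither chain is stationary --- constructing the stationary version is precisely the goal of the section.) This is exactly where the paper's sub-additivity hypothesis and the erasing words of Proposition \ref{pro:erasing} do the work: at each return time $\tilde\tau_i$ of the reference chain to $\emptystate$, sub-additivity bounds the shifted chain's buffer content by a deterministic constant $H_\epsilon$, so that with probability at least $\beta_\epsilon>0$ (uniform over all such states, by finiteness of $\W_2\cap\{|w|\le H_\epsilon\}$) the next at most $\ell_\epsilon$ pairs of arrivals form an erasing word of that content --- emptying the shifted chain while, by definition of an erasing word, leaving the reference chain empty as well. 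Independence of the input over disjoint blocks then makes the failure probability decay geometrically over successive attempts. Without some concrete mechanism of this kind your ``conditioning/coupling argument to preclude perfect anti-correlation'' remains a placeholder for the actual proof, and the proposal cannot be completed as written.
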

\begin{proof}
Fix throughout $\epsilon>0$, and $r\in \N_+$ such that $Y \in\mathscr Y_2^r$. 
As a consequence of the integrability of $\tau_1(Y)$, the random variable 
\[\kappa = \sup\left\{k\in \N\,:\,\tau_1(Y)\circ\theta^{-k}>k\right\}\]
that is, the largest horizon in the past from which the first visit to $\emptyset$ takes place after time 0, is a.s. finite. In particular there exists a positive integer $K_{\epsilon}$ such that 
\begin{equation}
\label{eq:prK}
\bpr{\kappa > K_\epsilon} < {\epsilon \over 5}.
\end{equation}
Again in view of (H3), there exists an integer $T_\epsilon>0$ such that 
\begin{equation}
\label{eq:prT}
\bpr{ \tau_1(Y)> T_\epsilon} < {\epsilon \over 5},
\end{equation}
and let us denote $H_\epsilon:=2K_\epsilon + 2r + T_\epsilon.$ 
We know from Proposition \ref{pro:erasing} that any word admits at least one minimal erasing word. Also, there are finitely many words in $\mathbb W_2$ of size less than $H_\epsilon$, 
and thus finitely many minimal erasing words of those words. So the following integer is well defined, and depends only on $H_\epsilon$, 
\begin{align}
\ell_\epsilon &= {1\over 2}\max_{u \in \mathbb W_2: |w| \le H_\epsilon} \min_{\substack{z\in \maV^*:\\ z\mbox{ \scriptsize{minimal erasing word of }}u}} |z|.\label{eq:defell}
\end{align}

We now define the sequence $\suitei{\tilde\tau_i}$ (where we drop the dependence on $Y$ for notational convenience), as the following subsequence of $\suitei{\tau_i(Y)}$: 
$$\tilde\tau_1 :=\tau_1(Y), \quad \tilde\tau_i := \inf \left\{n > \tilde\tau_{i-1} + \ell_\epsilon,\, U_{n}^{\{Y\}} = \emptystate \right\}, \; i\geq 2.$$ 
Also define the following family of events: for all $k\in\N$ and $i\in\N_+$,  
\begin{equation*}
\maD^{k}_i(Y) =\bigcup_{m=1}^{\ell_\epsilon} \Bigl\{V^0\circ\theta^{\tilde\tau_i + k}\,V^1\circ\theta^{\tilde\tau_i +k}\,...\,V^0\circ\theta^{\tilde\tau_i+k+m-1}\,V^1\circ\theta^{\tilde\tau_i+k+m-1}
\mbox{ \small{is an erasing word of} }U^{\{Y\}}_{\tilde\tau_i+k}\Bigl\},
\end{equation*}
and for any $k,n\in\N$, 
\begin{equation}
\maD^{k,n}(Y) =\bigcup_{\substack{i\in\N_+:\\\tilde\tau_i + \ell_\epsilon \le 2n}} \,\,\maD^{k}_i(Y),\quad k\in\N,\,n \in \N_+. \label{eq:defDkn}
\end{equation}
For any $k \in \N$ and $i\in\N_+$, on $\theta^k\maD^{k}_i(Y)$ we first have that for some (unique, and even) integer $m \le \ell_\epsilon$, 
$U^{\{Y\}}_{\tilde\tau_i+k+m}\circ\theta^{-k}= \emptyset$, and second, that $U^{\{Y\}}_{\tilde\tau_i+m}=\emptyset$, since 
\begin{multline*}
\left|U^{\{Y\}}_{\tilde\tau_i+m}\right|=\left|W_\phi\left(Y\,V^0\,V^1\,...\,V^0\circ\theta^{\tilde\tau_i+m-1}\,V^1\circ\theta^{\tilde\tau_i+m-1}\right)\right|\\
\shoveleft{\le \left|W_\phi\left(Y\,V^0\,V^1\,...\,V^0\circ\theta^{\tilde\tau_i-1}\,V^1\circ\theta^{\tilde\tau_i-1}\right)\right|}\\
\shoveright{+\left|W_\phi\left(V^0\circ\theta^{\tilde\tau_i}\,V^1\circ\theta^{\tilde\tau_i}\,...\,V^0\circ\theta^{\tilde\tau_i+m-1}\,V^1\circ\theta^{\tilde\tau_i+m-1}\right)\right|}\\ 
=0,
\end{multline*}
where the two terms in the third and fourth line above are zero from the very definitions of $\tilde\tau_i$ and an erasing word, respectively. 
Consequently, we have that 
\begin{equation}
\label{eq:finale1}
\theta^k\maD^{-k,n}(Y) \subseteq \bigcup_{l=0}^n \maA_{l}(Y) \cap \theta^k\maA_{l+k}(Y),\quad k,n \in \N.  
\end{equation}
Second, fix $n\in\N$ and a sample $\omega \in \{\kappa \le K_\epsilon\}\cap\bigcap_{k'=0}^{K_\epsilon}\theta^{k'}\maD^{k',n}(\emptyset)$ and an integer 
$k \ge K+1$. By the definition of $\kappa$, $U_0\left(\theta^{-k}\omega\right)=Y(\theta^{-k}\omega)$ entails that $U_{k-k'}\left(\theta^{-k}\omega\right)=\emptyset$ for some $k'\le K_\epsilon$;   
in other words $U^{\{\emptyset\}}_n \left(\theta^{-k'}\omega\right)$ equals $U^{\{Y\}}_{n+k-k'}\left(\theta^{-k}\omega\right)$ for any $n\ge 0$. But as $\theta^{-k'}\omega \in \maD^{k',n}(\emptyset)$ by assumption, 
we obtain that $\theta^{-k}\omega \in \maD^{k,n}(Y)$. Consequently we have that 
\begin{equation*}
\{\kappa \le K_\epsilon\}\cap\bigcap_{k=0}^{K_\epsilon}\theta^{k'}\maD^{k',n}(\emptyset) \subseteq \{\kappa \le K_\epsilon\}\cap\bigcap_{k=K+1}^{\infty}\theta^k\maD^{k,n}(Y)
\end{equation*}
and thereby
\[\{\kappa \le K_\epsilon\}\cap\bigcap_{k=0}^{K_\epsilon}\theta^k\left(\maD^{k,n}(Y) \cap \maD^{k,n}(\emptyset)\right) \subseteq \{\kappa \le K_\epsilon\}\cap\bigcap_{k=0}^{\infty}\theta^k\maD^{k,n}(Y).\]
This, together with (\ref{eq:finale1}), yields for any $n\in\N$ to 
\begin{equation}
\label{eq:finale2}
\{\kappa \le K_\epsilon\}\cap\bigcap_{k=0}^{K_\epsilon}\theta^k\left(\maD^{k,n}(Y) \cap \maD^{k,n}(\emptyset)\right) \subseteq \{\kappa \le K_\epsilon\}\cap\bigcap_{k=0}^{\infty} \bigcup_{l=0}^n \maA_{l}(Y) \cap \theta^k\maA_{l+k}(Y).  
\end{equation}

Now recall (\ref{eq:defell}). In words, $\ell_\epsilon$ is (half of) the minimal length of word that can accommodate at least one erasing word of any admissible word of even size 
bounded by $H_\epsilon$. Therefore, in view of the iid assumptions the following is a well defined element of $]0,1[$: 
\begin{multline}
\label{eq:defbeta}
\beta_\epsilon = \min_{u \in \mathbb W_2\,:\,|u| \le H_\epsilon} \bp\Biggl[\bigcup_{m=1}^{\ell_\epsilon}\Bigl\{V^0\,V^1\,V^0\circ\theta\,V^1\circ\theta\,...\,V^0\circ\theta^{m-1}\,V^0\circ\theta^{m-1}\\
 \mbox{ is a minimal erasing word of } u\Bigl\}\Biggl]. 
\end{multline}
Let 
\[M_\epsilon=\left\lceil {\mbox{Log}\epsilon - \mbox{Log}5 - \mbox{Log} (K_\epsilon+1) \over \mbox{Log}(1-\beta_\epsilon)}\right\rceil,\] 
that is, the least integer that is such that 
\begin{equation}
\label{eq:prM}
\left(1-\beta_\epsilon\right)^{M_\epsilon} < {\epsilon \over 5(K_\epsilon+1)}. 
\end{equation}
Again from (H3) and (IID), there exists a positive integer $N_\epsilon$ such that 
\begin{equation}
\label{eq:prN}
\bpr{\tilde \tau_{M_\epsilon} +  \ell_\epsilon > N_\epsilon}<{\epsilon \over 5}.
\end{equation}
All in all, we obtain that for all $n > N_\epsilon$, 
\begin{multline}
\bpr{\overline{\bigcap_{k=0}^{\infty} \bigcup_{l=0}^n \maA_{l}(Y) \cap \theta^k\maA_{l+k}(Y)}}\\
\shoveleft{\le \bpr{\overline{\bigcap_{k=0}^{\infty} \bigcup_{l=0}^n \maA_{l}(Y) \cap \theta^k\maA_{l+k}(Y)}\cap \{\tilde \tau_{M_\epsilon} +  \ell_\epsilon \le N_\epsilon\} 
\cap \{\kappa \le K_\epsilon\} \cap \{\tau_1(Y) \le  T_\epsilon\}}}\\ \shoveright{+\bpr{\tilde \tau_{M_\epsilon} +  \ell_\epsilon > N_\epsilon} + \bpr{\kappa > K_\epsilon}+ \bpr{\tau_1(Y)> T_\epsilon}}\\
\shoveleft{\le \bpr{\overline{\bigcap_{k=0}^{K_\epsilon}\theta^k\left(\maD^{k,n}(Y) \cap \maD^{k,n}(\emptyset)\right)} \,\,\cap \{\tilde \tau_{M_\epsilon} +  \ell_\epsilon \le N_\epsilon\} \cap \{\tau_1(Y) \le  T_\epsilon\}}  
+ {3\epsilon \over 5}}\\
\le \sum_{k=0}^{K_\epsilon}\bpr{\left(\bigcap_{i=1}^{M_\epsilon}\,\overline{\theta^k\maD^{k}_i(Y)}\right)\cap \{\tau_1(Y) \le  T_\epsilon\}}
+\sum_{k=0}^{K_\epsilon}\bpr{\left(\bigcap_{i=1}^{M_\epsilon}\,\overline{\theta^k\maD^{k}_i(\emptyset)}\right)\cap \{\tau_1(Y) \le  T_\epsilon\}}  + {3\epsilon \over 5}, \label{eq:finale3}
\end{multline}
where we use (\ref{eq:prK}), (\ref{eq:prT}), (\ref{eq:finale2}) and (\ref{eq:prN}) in the second inequality, and recalling (\ref{eq:defDkn}). 
Now let $u_\epsilon$ be an element of $\mathbb W_2$ such that $\left|u_\epsilon\right| \le H_\epsilon$ and achieving the minimum in (\ref{eq:defbeta}), i.e. 
\[\beta_\epsilon = \bpr{\bigcup_{m=1}^{\ell_\epsilon}\Bigl\{\mbox{$V^0\,V^1\,V^0\circ\theta\,V^1\circ\theta\,...\,V^0\circ\theta^{m-1}\,V^0\circ\theta^{m-1}$ \small{is a minimal erasing word of} $u_\epsilon$}\Bigl\}},\]
and define the events 
\begin{align*}
\check{\maD}_i &= \bigcup_{m=1}^{\ell_\epsilon} \Bigl\{\mbox{$V^0\circ\theta^{\tilde\tau_i}\,V^1\circ\theta^{\tilde\tau_i}\,...\,V^0\circ\theta^{\tilde\tau_i+m-1}\,V^0\circ\theta^{\tilde\tau_i+m-1}$ \small{is a minimal erasing word of }$u_\epsilon$}\Bigl\},\, i\in \N.
\end{align*}
From assumption (IID), the events $\check{\maD}_i,i\in\N$, are iid of probability $\beta_\epsilon$. 
On another hand, on the event $\{\tau_1(Y) \le  T_\epsilon\}$, for any $0 \le k \le K_\epsilon$, 
\[\left|U^{\{Y\}}_{\tau_1(Y)+k}\circ\theta^{-k}\right| \le |Y| + 2k + \tau_1(Y)  \le 2r + 2K_\epsilon +T_\epsilon = H_\epsilon.\]
 Thus, as $W_\phi\left(V^0\circ\theta^{\tilde\tau_i}\,V^1\circ\theta^{\tilde\tau_i}\,...\,V^0\circ\theta^{\tilde\tau_{i+1}-1}\,V^1\circ\theta^{\tilde\tau_{i+1}-1}\right)=\emptyset$ 
for all $i$, the sub-additivity of $\phi$ and an immediate induction entail that $\left|U^{\{Y\}}_{\tilde \tau_i + k}\circ\theta^{-k}\right| \le H_\epsilon$ for all $i \ge 1$. 
Therefore, for any $k \le K_\epsilon$ and any $i\in \N_+$, by the very definition of $\beta_\epsilon$ we have that 
$\bpr{\theta^k\maD^{k}_i(Y)} \ge \bpr{\check{\maD}_i}=\beta_\epsilon$, and in turn by independence of the $\check{\maD}_i$'s, that for all $k \le K_\epsilon$, 
\begin{equation}
\label{eq:finale4}
\bpr{\left(\bigcap_{i=1}^{M_\epsilon}\,\overline{\theta^k\maD^{k}_i(Y)}\right)\cap \{\tau_1(Y) \le  T_\epsilon\}} \le \prod_{i=1}^{M_\epsilon}\,\bpr{\overline{\check{\maD}_i}} = \left(1-\beta_\epsilon\right)^{M_\epsilon}.
\end{equation} 
All the same, on the event $\{\tau_1(Y) \le  T_\epsilon\}$, for any $0 \le k \le K_\epsilon$ we have that  
\[\left|U^{\{\emptyset\}}_{\tau_1(Y)+k}\circ\theta^{-k}\right| \le 2k + \tau_1(Y)  \le H_\epsilon,\]
thus we can conclude similarly that 
\[\bpr{\left(\bigcap_{i=1}^{M_\epsilon}\,\overline{\theta^k\maD^{k}_i(\emptyset)}\right)\cap \{\tau_1(Y) \le  T_\epsilon\}} \le \left(1-\beta_\epsilon\right)^{M_\epsilon}.\]
Injecting this together with (\ref{eq:finale4}) and (\ref{eq:prM}) in (\ref{eq:finale3}) entails that, for any $n > N_\epsilon$, 
\[\bpr{\overline{\bigcap_{k=0}^{\infty} \bigcup_{l=0}^n \maA_{l}(Y) \cap \theta^k\maA_{l+k}(Y)}} < \epsilon,\]
which concludes the proof.  
\end{proof} 

We now prove the following forward coupling result,  

\begin{proposition}
\label{pro:fwiid}
Under (H1), there is forward coupling between $\suite{U^{\{Y\}}_n}$ and $\suite{U^{\{Y^*\}}_n}$, for any 
two r.v.'s $Y$ and $Y^*$ in $\mathscr Y_2^\infty$.  
\end{proposition}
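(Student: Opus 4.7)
The plan is to establish the stronger statement that, almost surely, there exists a finite integer $N$ with $U^{\{Y\}}_N = U^{\{Y^*\}}_N = \emptystate$. Once such an $N$ exists, both processes evolve thereafter from $\emptystate$ under the common shifted input $\left((V^0,\Sigma^0,V^1,\Sigma^1)\circ\theta^n\right)_{n\ge N}$ and therefore coincide for all $n \ge N$, which is precisely the forward coupling statement. Hence it is enough to exhibit, on a $\bp$-a.s. event, a common hitting time of the pair at $(\emptystate,\emptystate)$.

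I would adapt the renovation argument of Proposition \ref{pro:bwiid} to the joint dynamics of $(U^{\{Y\}}_n, U^{\{Y^*\}}_n)$. Fix $\epsilon > 0$. By (H1) applied both to $Y$ and to $Y^*$ (the latter following from the former by irreducibility of $\suite{U_n}$ on $\mathbb W_2$), both processes return to $\emptystate$ infinitely often. Let $(\tilde\tau_i)_{i\ge 1}$ denote a spaced subsequence of visits of $U^{\{Y\}}$ to $\emptystate$ with successive gaps at least $\ell_\epsilon$, exactly as in the proof of Proposition \ref{pro:bwiid}. The first task is to uniformly control $|U^{\{Y^*\}}_{\tilde\tau_i}|$. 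To this end, let $\sigma(\tilde\tau_i):=\max\{n\le \tilde\tau_i : U^{\{Y^*\}}_n=\emptystate\}$ be the most recent visit of $U^{\{Y^*\}}$ to $\emptystate$ before $\tilde\tau_i$; the trivial growth bound $|U^{\{Y^*\}}_{\tilde\tau_i}|\le 2(\tilde\tau_i-\sigma(\tilde\tau_i))$ combined with (H1) for $Y^*$ and the regenerative structure of $U^{\{Y^*\}}$ yields an event of $\bp$-probability at least $1-\epsilon/2$ on which $|U^{\{Y^*\}}_{\tilde\tau_i}| \le H_\epsilon$ for every $i\le M_\epsilon$, for suitable $\epsilon$-dependent constants $H_\epsilon$ and $M_\epsilon$.

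On this event, each $U^{\{Y^*\}}_{\tilde\tau_i}$ lies in the finite set $\mathbb W_2(H_\epsilon/2)$. By Proposition \ref{pro:erasing}, every such state admits a minimal erasing word, and the maximum of their lengths over the finitely many possibilities can be bounded by some $2\ell_\epsilon$. The probability that the input segment $((V^0,V^1)\circ\theta^{\tilde\tau_i + j})_{0\le j<\ell_\epsilon}$ begins with an erasing word of $U^{\{Y^*\}}_{\tilde\tau_i}$ is therefore bounded below by a positive constant $\beta_\epsilon$, uniformly in $i$ and in the history up to $\tilde\tau_i$. When such an erasing word of length $2m\le 2\ell_\epsilon$ occurs, the defining property of erasing words gives simultaneously $U^{\{Y^*\}}_{\tilde\tau_i+m}=\emptystate$ (the word erases $U^{\{Y^*\}}_{\tilde\tau_i}$) and $U^{\{Y\}}_{\tilde\tau_i+m}=\emptystate$ (the word matches perfectly on its own, and $U^{\{Y\}}_{\tilde\tau_i}=\emptystate$), which is the common hit we seek.

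A standard geometric / Borel--Cantelli argument, identical in structure to the one at the end of the proof of Proposition \ref{pro:bwiid} and based on the iid structure of the input across the disjoint segments of length $\ell_\epsilon$ following each $\tilde\tau_i$, then produces such a successful trial among the first $M_\epsilon$ with probability at least $1-(1-\beta_\epsilon)^{M_\epsilon}\ge 1-\epsilon/2$. Combining this with the earlier state-size estimate and letting $\epsilon\to 0$ yields the $\bp$-a.s. existence of the coupling time $N$. The main technical obstacle is precisely the uniform bound on $|U^{\{Y^*\}}_{\tilde\tau_i}|$: unlike in Proposition \ref{pro:bwiid}, where the size at relevant times is controlled directly via the initial size of $Y$, the $T_\epsilon$-bound on $\tau_1(Y)$ and the constraint $k\le K_\epsilon$, here one must interleave the independent regeneration structures of the two processes, which is the step where the argument genuinely departs from that of Proposition \ref{pro:bwiid}.
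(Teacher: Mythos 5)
Your overall route is the one the paper takes: reduce forward coupling to the existence of a common time at which both chains equal $\emptystate$ (after which the common input forces them to coincide forever), then rerun the erasing-word renovation argument of Proposition \ref{pro:bwiid} along the spaced return times $\tilde\tau_i$ of $U^{\{Y\}}$ to $\emptystate$, with the second chain $U^{\{Y^*\}}$ now playing the role that the shifted copies $U^{\{Y\}}_{\cdot+k}\circ\theta^{-k}$ played there. The paper's own proof is only a few lines and asserts exactly this analogy.

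However, the step you yourself single out as the main obstacle --- the uniform bound $|U^{\{Y^*\}}_{\tilde\tau_i}|\le H_\epsilon$ for all $i\le M_\epsilon$ --- is not actually closed, and the mechanism you propose (the age of the current regeneration cycle of $U^{\{Y^*\}}$) creates a genuine circularity among the constants: $H_\epsilon$ must dominate the first $M_\epsilon$ states of $U^{\{Y^*\}}$ and so depends on $M_\epsilon$; $\ell_\epsilon$ and hence $\beta_\epsilon$ are determined by $H_\epsilon$; and $M_\epsilon$ must in turn satisfy $(1-\beta_\epsilon)^{M_\epsilon}\le\epsilon/2$. Neither order of fixing $H_\epsilon$ and $M_\epsilon$ terminates. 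The clean resolution --- which is precisely the role of the sub-additivity hypothesis, and is used verbatim at the corresponding spot in the proof of Proposition \ref{pro:bwiid} --- is that the input block between two consecutive times $\tilde\tau_i$ and $\tilde\tau_{i+1}$ is perfectly matchable by $\phi$, since it takes $U^{\{Y\}}$ from $\emptystate$ back to $\emptystate$; sub-additivity then gives $\left|U^{\{Y^*\}}_{\tilde\tau_{i+1}}\right|\le\left|U^{\{Y^*\}}_{\tilde\tau_i}\right|+0$. The sizes of the second chain at the trial times are therefore non-increasing in $i$, and the single estimate $\left|U^{\{Y^*\}}_{\tilde\tau_1}\right|\le |Y^*|+2\tau_1(Y)$, controlled on an event of the form $\{\tau_1(Y)\le T_\epsilon\}\cap\{|Y^*|\le 2r^*\}$, propagates to every $i$. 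This yields an $H_\epsilon$ that does not depend on $M_\epsilon$ and breaks the loop. (Your route could alternatively be repaired by running infinitely many trials and invoking a conditional Borel--Cantelli argument, but that is a different and longer proof than the one you wrote.) With this substitution the remainder of your argument goes through and matches the paper's.
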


\begin{proof}
We aim at proving that the stopping time 
\[\rho(Y,Y^*):= \inf\left\{n \ge 0: U^{\{Y\}}_l = U^{\{Y^*\}}_l\mbox{ for all }l \ge n\right\}\]
is a.s. finite, that is 
\begin{equation}
\label{eq:fwcouple1}
\lim_{n\to\infty}\bpr{\rho(Y,Y^*) \le n}=1.
\end{equation}
Observe that, as the two recursions $\suite{U^{\{Y\}}_n}$ and $\suite{U^{\{Y^*\}}_n}$ are driven by the same input, they coalesce as soon as they meet for the first time. Hence, 
(\ref{eq:fwcouple1}) holds true in particular if 
\begin{equation}
\label{eq:fwcouple2}
\lim_{n\to\infty}\bpr{\bigcup_{l=0}^n \,\Bigl\{U^{\{Y\}}_l=U^{\{Y^*\}}_l=\emptyset\Bigl\}}=\lim_{n\to\infty}\bpr{\bigcup_{l=0}^n \,\mathscr A_l(Y)\cap \mathscr A_l(Y^*)}=1.
\end{equation} 
From Proposition \ref{pro:bwiid}, the latter holds true whenever we replace $Y^*$ by $U_0^{\{Z\}}\circ\theta^{-k}$ for any finite $\mathbb W_2$-valued r.v. $Z$ and any $k \in \N$. 
The proof of (\ref{eq:fwcouple2}) for any finite $Y^*$ is analog. 
\end{proof}

We are now ready to conclude the proof of Theorem \ref{thm:main}. 

\begin{proof}[Proof of Theorem \ref{thm:main}]
Fix a r.v. $Y\in\mathscr Y_2^{\infty}$, and let $r$ be such that $Y \in \mathscr Y_2^r$. From Proposition \ref{pro:bwiid}, (\ref{eq:renov0}) holds true. Moreover, the sequence 
$\suite{\maA_n(Y)}$ is clearly a sequence of renovating events of length 1 for 
$\suite{U^{\{Y\}}_{n}}$ (see \cite{Foss92,Foss94}). It then follows from Theorem 2.5.3 of \cite{BacBre02} that $Y$ converges with strong backwards coupling, and thereby also in the forward sense, to a stationary sequence $\suite{U\circ\theta^n}$, where $U\in\mathscr Y_2^{\infty}$. Now, Proposition \ref{pro:fwiid} entails in particular that any pair of such stationary sequences $\suite{U\circ\theta^n}$ and $\suite{U^*\circ\theta^n}$ couple, and therefore coincide almost surely. Thus there exists a unique solution $U$ to (\ref{eq:recurstatUbar}) in $\mathscr Y_2^{\infty}$.   
\end{proof}

\subsection{Consequences}
\begin{corollary}
\label{cor:uniqueiid}
Under the assumptions of Theorem \ref{thm:main}, the bi-infinite stationary version of the even buffer content sequence 
$\suitez{U_n}$ is unique. 
\end{corollary}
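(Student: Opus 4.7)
The strategy is to reduce the statement directly to the uniqueness part of Theorem \ref{thm:main}. The first step is to realize an arbitrary bi-infinite stationary version $(U_n^*)_{n\in\Z}$ of the even buffer content, together with the bi-infinite input, on the canonical space $\mathscr Q^0=(\Omega^0,\mathscr F^0,\bp,\theta)$. Once transferred to $\mathscr Q^0$, stationarity and compatibility with the dynamics (\ref{eq:recurUbar}) force $U_n^*=U^*\circ\theta^n$ for every $n\in\Z$, where $U^*$ is a $\mathbb W_2$-valued r.v. on $\Omega^0$ solving the functional equation (\ref{eq:recurstatUbar}). This canonical reduction is standard in the Baccelli--Br\'emaud framework (Section 2.1 of \cite{BacBre02}).

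The second step is the key observation that, since the state space $\mathbb W_2$ is made of finite-length words, $U^*$ is automatically an a.s.-finite $\mathbb W_2$-valued random variable, hence $U^*\in\mathscr Y_2^{\infty}$. Theorem \ref{thm:main} then asserts the uniqueness of such a solution to (\ref{eq:recurstatUbar}), so $U^*=U$ almost surely, where $U$ is the stationary solution constructed in that theorem. Applying the shift yields $(U_n^*)_{n\in\Z}=(U\circ\theta^n)_{n\in\Z}$ almost surely.

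There is essentially no obstacle beyond this reduction, since the heavy lifting (existence, uniqueness in $\mathscr Y_2^{\infty}$, strong backwards coupling) is already done in Theorem \ref{thm:main}. As an alternative, one could also argue via forward coupling: given two stationary versions $(U\circ\theta^n)$ and $(U^*\circ\theta^n)$, Proposition \ref{pro:fwiid} provides a finite $\rho(U,U^*)$ after which both sequences coincide; since $\bpr{U\circ\theta^n=U^*\circ\theta^n}\to 1$ as $n\to\infty$ while this probability equals $\bpr{U=U^*}$ by stationarity, one obtains $U=U^*$ almost surely. Either route closes the argument.
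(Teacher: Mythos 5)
Your ``alternative'' argument is essentially the paper's own proof: the authors also apply Proposition \ref{pro:fwiid} to the two stationary versions to get an a.s.\ finite meeting time $N^+(0)$, and then use stationarity to push the starting time to $-\infty$ and conclude that $\pr{U^*_m\ne\tilde U^*_m}=\lim_{n\to-\infty}\pr{N^+(0)>m-n}=0$. Your primary route (reducing to the uniqueness clause of Theorem \ref{thm:main}) is acceptable in spirit but quietly assumes that an arbitrary bi-infinite stationary version is shift-compatible, i.e.\ of the form $U^*\circ\theta^n$ for a single r.v.\ $U^*$ solving (\ref{eq:recurstatUbar}); the paper sidesteps this by arguing directly on the two sequences using only their joint stationarity with the input, which is the cleaner way to close the argument.
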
 

\begin{proof}
Consider a couple of stationary versions $\suitez{U^*_n}$ and $\suitez{\tilde U^*_n}$. Fix $m\in \Z$. 
Then we have by definition that 
\[\{U^*_m \ne \tilde U^*_m\} = \bigcap_{n\le m} \left\{U^{\{U^*_{n}\}}_m\circ\theta^n \ne U^{\{\tilde U^*_{n}\}}_m\circ\theta^n \right\}
:=  \bigcap_{n\le m} \mathcal B_n.\]
The sequence of events $\left(\mathcal B_n\right)_{n\le m}$ is clearly decreasing for inclusion as $n$ decreases - it is in fact constant. 
Therefore we get that 
\begin{equation}
\label{eq:unique1}
\pr{U^*_m \ne \tilde U^*_m} = \pr{\bigcap_{n\le m} \mathcal B_n} = \lim_{n\to -\infty} \pr{\mathcal B_n}.
\end{equation}
Now, denoting for any $n \in \Z$, 
\[N^+(n) = \inf\left\{k\ge n\,:\,U^{\{U^*_n\}}_k\circ\theta^n = U^{\{\tilde U^*_n\}}_k\circ\theta^n\right\} =\inf\left\{k\ge n\,:\,U^*_k = \tilde U^*_k\right\},\]
we get that for any $n\le m$, 
\[\pr{\mathcal B_n} = \pr{N^+(n) > m - n} = \pr{N^+(0) > m - n},\]
where we use the stationarity of the input $\suitez{(V^0_n,\Sigma^0_n,V^1_n,\Sigma^1_n)}$, 
and of $\suitez{U^*_n}$ and $\suitez{\tilde U^*_n}$ in the second inequality. 
But applying Proposition \ref{pro:fwiid} to the r.v.'s $U^*_0$ and $\tilde U^*_0$, we obtain that $N^+(0)$ is a.s. finite. 
Thus 
$$\lim_{n\to -\infty} \pr{N^+(0) > m - n} = 0,$$
and we conclude using (\ref{eq:unique1}) that $U^*_m = \tilde U^*_m$ a.s.. As this is true for any $m\in \Z$, 
the proof is complete. 

\end{proof}

We deduce from the above results that  

\begin{corollary}
\label{cor:bwiid}
Under the assumptions of Theorem \ref{thm:main}, for any $Y \in \mathscr Y_2^{\infty}$ and any $m\in\Z$ we have that 
\[\lim_{k\to\infty} U^{\{Y\}}_m\circ\theta^{-k} = U^*_m \quad \mbox{ a.s.,}\]
where $\suitez{U^*_n}$ is the only stationary version of $\suitez{U_n}$. 
\end{corollary}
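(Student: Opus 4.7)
The plan is to derive Corollary \ref{cor:bwiid} as a direct consequence of the strong backwards coupling established in Theorem \ref{thm:main}, together with the $\theta$-stationarity of $\bp$ and the uniqueness guaranteed by Corollary \ref{cor:uniqueiid}. The point is that Theorem \ref{thm:main} already yields the backward coupling statement at ``time 0'', and it suffices to transport this statement to ``time $m$'' by composing with the shift $\theta^m$, exploiting the fact that $U^*_m = U \circ \theta^m$ where $U=U^*_0$ is the unique fixed point of \eqref{eq:recurstatUbar}.

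Concretely, I would fix $Y \in \mathscr Y_2^\infty$ and invoke Theorem \ref{thm:main} to obtain that $\bp$-a.s.\ there exists a finite random index $N^*(Y)$ such that, for every $n \ge N^*(Y)$,
\[
U^{\{Y\}}_n \circ \theta^{-n} \;=\; U.
\]
Evaluating this identity at the shifted point $\theta^m\omega$ rather than at $\omega$, which is legitimate because $\bp$ is $\theta$-invariant and hence $N^*(Y) \circ \theta^m$ is $\bp$-a.s.\ finite, I would obtain that $U^{\{Y\}}_n(\theta^{m-n}\omega) = U(\theta^m\omega) = U^*_m(\omega)$ holds for every $n \ge N^*(Y)\circ\theta^m$. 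Setting $k := n-m$ and rewriting this as
\[
U^{\{Y\}}_{m+k}\circ\theta^{-k}(\omega) \;=\; U^*_m(\omega) \quad \text{for every } k \ge N^*(Y)\circ\theta^m - m,
\]
I would conclude the claimed a.s.\ convergence (in fact, an eventual equality, which is strictly stronger). Corollary \ref{cor:uniqueiid} ensures that the limiting sequence $\suitez{U^*_n}$ on the right-hand side is unambiguously defined.

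There is essentially no obstacle in this argument: the only point requiring a touch of care is the measurability and $\theta$-covariance bookkeeping, namely verifying that $N^*(Y)\circ\theta^m$ is a.s.\ finite (immediate from $\theta$-invariance of $\bp$) and that the composition rule $(U^{\{Y\}}_n\circ\theta^{-n})\circ\theta^m = U^{\{Y\}}_n\circ\theta^{m-n}$ used above is indeed correct, which is clear from the definition of the recursion \eqref{eq:recurUbar}. All the analytical content has already been absorbed by Propositions \ref{pro:bwiid} and \ref{pro:fwiid} and by Theorem \ref{thm:main}, so the corollary is a short deduction rather than a new argument.
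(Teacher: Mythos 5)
Your proposal is correct and follows essentially the same route as the paper: the paper also deduces the claim for $m=0$ directly from the strong backwards coupling of Theorem \ref{thm:main} and then extends it to arbitrary $m\in\Z$ by stationarity of the input and of $\suitez{U^*_n}$, which is exactly your composition with $\theta^m$. Your version merely makes the shift bookkeeping (the identification of $U^{\{Y\}}_{m+k}\circ\theta^{-k}$ with the quantity in the statement, and the a.s.\ finiteness of $N^*(Y)\circ\theta^m$) more explicit than the paper does.
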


\begin{proof}
In view of Theorem \ref{thm:main}, there exists a.s. a finite $N^-$ such that  $U^{\{Y\}}_0\circ\theta^{-k} = U^*_0$ 
for all $k \ge N$. Hence the claim for $m=0$. 
It can be generalized to any fixed $m\in\Z$ in view of the stationarity of the input and of $\suitez{U^*_n}$. 
%
%
\end{proof}

\section{$\phi$-matchings}
\label{sec:matching} 


As is easily seen, any model $(G,\mu,\phi)$ generates a family of random graphs, as follows. 
For any $n\in \N$, we consider the matching  
$$\mathbf M^{\{\emptyset,0\}}_{n}(\phi):=M_\phi(V^0_0V^1_{0}\,...\,V^0_{n-1}V^0_{n-1}\,,\,\Sigma^0_0\Sigma^1_0\,...\,\Sigma^0_{n-1}\Sigma^1_{n-1}),$$ 
that is, the random graph in which the nodes are the entered items from $0$ to $n-1$ (on the even time scale introduced above),  
and there is an edge between two nodes if and only if the corresponding items are matched according to $\phi$. 
 In any realization of $\mathbf M^{\{\emptyset,0\}}_{n}(\phi)$, all nodes have thus 0 or 1 neighbor, in other words they are 
of degree 0 or 1. We can naturally extend this definition by denoting, for a $\mathbb W_2$-valued r.v. $Y$, by 
$\mathbf M^{\{Y,0\}}_{n}(\phi)$ the matching of all initially stored items (represented by the initial condition $Y$), together with the items entered 
up to time $n$ excluded. The realization of a finite matching $\mathbf M^{\{Y,0\}}_{n}(\phi)$ is then said to be {\em perfect} 
if all of its nodes are of degree 1. 

\subsection{Infinite $\phi$-matchings at even times}  
It is immediate that for any $Y \in \mathscr Y_2^{\infty}$ and any $n$, 
$\mathbf M^{\{Y,0\}}_{n}(\phi)$ is a.s. an induced subgraph of 
$\mathbf M^{\{Y,0\}}_{n+1}(\phi)$, see an example for $\phi=\textsc{lcfm}$ in Figure \ref{fig:increase}. 

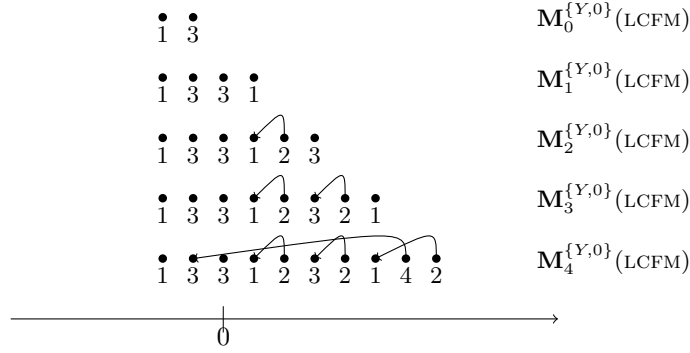
\begin{figure}[h!]
\begin{center}
\begin{tikzpicture}[scale=0.8]
\fill (-1,5) circle (2pt) node[below] {\small{1}};
\fill (-0.5,5) circle (2pt) node[below] {\small{3}};
\fill (5,5) node[right] {\small{$\mathbf M^{\{Y,0\}}_0(\textsc{lcfm})$}};
\fill (-1,4) circle (2pt) node[below] {\small{1}};
\fill (-0.5,4) circle (2pt) node[below] {\small{3}};
\fill (0,4) circle (2pt) node[below] {\small{3}};
\fill (0.5,4) circle (2pt) node[below] {\small{1}};
\fill (5,4) node[right] {\small{$\mathbf M^{\{Y,0\}}_1(\textsc{lcfm})$}};
\fill (-1,3) circle (2pt) node[below] {\small{1}};
\fill (-0.5,3) circle (2pt) node[below] {\small{3}};
\fill (0,3) circle (2pt) node[below] {\small{3}};
\fill (0.5,3) circle (2pt) node[below] {\small{1}};
\fill (1,3) circle (2pt) node[below] {\small{2}};
\draw[->, thin] (1,3) .. controls +(up:0.5cm)  .. (0.5,3);
\fill (1.5,3) circle (2pt) node[below] {\small{3}};
\fill (5,3) node[right] {\small{$\mathbf M^{\{Y,0\}}_2(\textsc{lcfm})$}};
\fill (-1,2) circle (2pt) node[below] {\small{1}};
\fill (-0.5,2) circle (2pt) node[below] {\small{3}};
\fill (0,2) circle (2pt) node[below] {\small{3}};
\fill (0.5,2) circle (2pt) node[below] {\small{1}};
\fill (1,2) circle (2pt) node[below] {\small{2}};
\draw[->, thin] (1,2) .. controls +(up:0.5cm)  .. (0.5,2);
\fill (1.5,2) circle (2pt) node[below] {\small{3}};
\fill (2,2) circle (2pt) node[below] {\small{2}};
\draw[->, thin] (2,2) .. controls +(up:0.5cm)  .. (1.5,2);
\fill (2.5,2) circle (2pt) node[below] {\small{1}};
\fill (5,2) node[right] {\small{$\mathbf M^{\{Y,0\}}_3(\textsc{lcfm})$}};
\fill (-1,1) circle (2pt) node[below] {\small{1}};
\fill (-0.5,1) circle (2pt) node[below] {\small{3}};
\fill (0,1) circle (2pt) node[below] {\small{3}};
\fill (0.5,1) circle (2pt) node[below] {\small{1}};
\fill (1,1) circle (2pt) node[below] {\small{2}};
\draw[->, thin] (1,1) .. controls +(up:0.5cm)  .. (0.5,1);
\fill (1.5,1) circle (2pt) node[below] {\small{3}};
\fill (2,1) circle (2pt) node[below] {\small{2}};
\draw[->, thin] (2,1) .. controls +(up:0.5cm)  .. (1.5,1);
\fill (2.5,1) circle (2pt) node[below] {\small{1}};
\fill (3,1) circle (2pt) node[below] {\small{4}};
\draw[->, thin] (3,1) .. controls +(up:0.5cm)  .. (-0.5,1);
\fill (3.5,1) circle (2pt) node[below] {\small{2}};
\draw[->, thin] (3.5,1) .. controls +(up:0.5cm)  .. (2.5,1);
\fill (5,1) node[right] {\small{$\mathbf M^{\{Y,0\}}_4(\textsc{lcfm})$}};
\draw[->] (-3.5,0) -- (5.5,0);
\fill (0,0) node[]{$|$} node[below]{$0$};
\end{tikzpicture}
\caption[smallcaption]{Construction of the increasing sequence 
$\suite{\mathbf M^{\{Y,0\}}_n(\phi)}$ for a given sample $\omega$, $Y(\omega)=13$, 
$\phi=\textsc{lcfm}$ and $G$ the compatibility graph of 
Figure \ref{Fig:paw}.}
\label{fig:increase}
\end{center}
\end{figure} 

We aim at constructing from the increasing sequence $\suite{\mathbf M^{\{Y,0\}}_{n}(\phi)}$, the limiting object 
$\mathbf M^{\{Y,0\}}_{\infty}(\phi)$ 
as the infinite random graph obtained when letting $n$ go to infinity in the above. 
For this, suppose that $G$ is non-bipartite, $\phi$ is sub-additive and $\mu\in\textsc{Ncond}(G)$. Consider the (unique, from Theorem  \ref{thm:main}) stationary version $\suitez{U^*_n}$ 
of the even buffer content chain of the model. 
In these conditions, $U^*_n \sim \Pi_U$ for all $n$ and $\Pi_U(\emptyset)>0$, so the family of integers 
\[\mathscr C^*_{\ge 0}:=\left\{n \in \N\,:\, U^*_n = \emptyset\right\}\]
is a.s. infinite. The elements of the latter are called {\em construction points} of the model over $\N$. 
In particular, an infinite $\phi$-matching $\mathbf M^{*,0}_{\infty}(\phi)$ can be constructed from the stationary version $\suitez{U^*_n}$ 
as the union of the $\phi$-matchings between construction points. In other words, letting 
$c^*_0 < c^*_1 < c^*_2 <... $ be the elements of $\mathscr C^*_{\ge 0}$ in increasing order, we set 
(with obvious notation) 
\begin{equation}
\label{eq:definfmatch}
\mathbf M^{*,0}_{\infty}(\phi):=\mathbf M^{\{U^*_0,0\}}_{\infty}(\phi)=\mathbf M^{\{U^*_0,0\}}_{c^*_{0}}(\phi)\cup\,\bigcup_{i=0}^{\infty} \mathbf M^{\{\emptyset,c^*_i\}}_{c^*_{i+1}}(\phi),
\end{equation} 
that is, the random graph obtained by concatenating the perfect matching of the initially stored items corresponding to $U^*_0$ together with all arrivals 
until the first non-negative construction point, and all other perfect matchings between construction points. 
Thus according to the above definition, the infinite matching $\mathbf M^{*,0}_{\infty}(\phi)$ is perfect. We call it the {\em stationary} perfect infinite $\phi$-matching (at even times) 
of the model. 

Now, in view of Theorem \ref{thm:main} there is strong backwards coupling for the Markov chain $\suite{U^{\{Y\}}_n}$ 
with the stationary version $\suitez{U^*_n}$.  
In particular, there is also forward coupling between these two sequences, and we let $N^+$ be the a.s. finite coupling time. 
Letting also
$J:=\inf\{i\in\N\,:\,c^*_i \ge N^+\}$ be the first construction point after the coupling time, 
denote 
$$\mathbf M^{\{Y,0\}}_{\infty}(\phi):=\mathbf M^{\{Y,0\}}_{c^*_{J}}(\phi)\cup\,\bigcup_{i=J}^{\infty} \mathbf  M^{\{\emptyset,c^*_i\}}_{c^*_{i+1}}(\phi).$$
Because $U^{\{Y,0\}}_{c^*_{J}}=\emptyset$, the infinite $\phi$-matching $\mathbf M^{\{Y,0\}}_{\infty}(\phi)$ is perfect, and coincides with $\mathbf M^{*,0}_{\infty}(\phi)$ from $c^*_J$ onwards. 
We have thus proven the following result, 

\begin{proposition}
\label{prop:matchingN}
Under the assumptions of Theorem \ref{thm:main}, for any $Y \in \mathscr Y_2^{\infty}$ 
there exists a.s. a unique infinite $\phi$-matching at even times, 
$\mathbf M^{\{Y,0\}}_{\infty}(\phi)$, starting with the even buffer-content $Y$ at time $0$. 
This infinite matching is a.s. perfect, and coincide a.s. in finite time with the stationary infinite 
$\phi$-matching $\mathbf M^{*,0}_{\infty}(\phi)$ of the model, defined by (\ref{eq:definfmatch}). 
\end{proposition}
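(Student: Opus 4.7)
The argument is essentially contained in the discussion preceding the statement, and my plan is to consolidate it into a coherent proof built around two ingredients: the monotonicity of the matching sequence along a fixed sample path, and the strong backwards coupling furnished by Theorem \ref{thm:main}. For a fixed $\omega$, the sequence $\suite{\mathbf M^{\{Y,0\}}_n(\phi)}$ is increasing for the induced-subgraph order (as pointed out around Figure \ref{fig:increase}), since no edge is ever broken once formed under an admissible policy. Consequently, the limiting object $\mathbf M^{\{Y,0\}}_\infty(\phi) := \bigcup_n \mathbf M^{\{Y,0\}}_n(\phi)$ is unambiguously well defined as an infinite random graph on the countable vertex set $Y \cup \{V^0_n, V^1_n\}_{n \ge 0}$; this takes care of the existence and uniqueness parts of the statement.

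To establish the perfectness and the identification with the stationary matching, I would first invoke Theorem \ref{thm:main} to produce the unique stationary sequence $\suitez{U^*_n}$ in $\mathscr Y_2^\infty$ and, combined with Proposition \ref{pro:fwiid}, an a.s. finite forward coupling time $N^+$ with $U^{\{Y\}}_n = U^*_n$ for all $n \ge N^+$. Because $U^*$ is produced as the strong backwards-coupling limit of $\suite{U^{\{\emptyset\}}_n \circ \theta^{-n}}$, which under (H1) returns to $\emptyset$ infinitely often, one has $\bpr{U^*_0 = \emptyset} > 0$, and by ergodicity of $\mathscr Q^0$ the set of construction points $\mathscr C^*_{\ge 0} = \{n \in \N : U^*_n = \emptyset\}$ is a.s. infinite. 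In particular $J := \inf\{i \in \N : c^*_i \ge N^+\}$ is a.s. finite, and $U^{\{Y\}}_{c^*_J} = U^*_{c^*_J} = \emptyset$, i.e., every item initially present in $Y$ or entering up to the $c^*_J$-th even time slot has been matched by $\phi$. Thus the initial block $\mathbf M^{\{Y,0\}}_{c^*_J}(\phi)$ is a finite perfect matching, and the decomposition
\[
\mathbf M^{\{Y,0\}}_\infty(\phi) = \mathbf M^{\{Y,0\}}_{c^*_J}(\phi) \cup \bigcup_{i \ge J} \mathbf M^{\{\emptyset, c^*_i\}}_{c^*_{i+1}}(\phi)
\]
exhibits $\mathbf M^{\{Y,0\}}_\infty(\phi)$ as a concatenation of perfect finite matchings between consecutive construction points, so it is itself perfect; from the index $J$ onward, block by block, it coincides with the stationary matching $\mathbf M^{*,0}_\infty(\phi)$ defined in (\ref{eq:definfmatch}).

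The main (modest) technical point is the nonnegativity $\bpr{U^*_0 = \emptyset} > 0$, which guarantees that construction points accumulate a.s.; this is not an extra hypothesis but a direct byproduct of the renovation scheme used in the proof of Theorem \ref{thm:main}, since the renovating events $\maA_n(Y)$ appearing there are exactly the events $\{U^{\{Y\}}_n = \emptyset\}$. Once this is granted, everything else is deterministic on a set of full probability: the monotonicity in $n$ gives the limit, the finite stopping time $c^*_J$ gives perfectness, and the forward coupling gives the tail equality with the stationary matching.
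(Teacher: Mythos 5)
Your proposal is correct and follows essentially the same route as the paper, whose proof is precisely the discussion preceding the statement: monotonicity of $\suite{\mathbf M^{\{Y,0\}}_n(\phi)}$ for the limit, positivity of $\Pi_U(\emptyset)$ under (H1) to get infinitely many construction points, and the forward coupling time $N^+$ from Theorem \ref{thm:main} together with the first subsequent construction point $c^*_J$ to obtain perfectness and the eventual agreement with $\mathbf M^{*,0}_\infty(\phi)$. The only cosmetic difference is that you define the limit as the increasing union and then identify it with the block decomposition, whereas the paper takes the block decomposition as the definition; both rest on the same observation that the buffer is empty at each construction point past $c^*_J$.
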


\begin{remark}
\rm
In graph theory, a matching on a graph is an induced subgraph in which any node is of {degree} 0 or 1. 
A matching is said {maximal} if there exists no matching strictly inducing it, i.e., there is no matching including it and having more edges. It is said {perfect} if it contains only nodes of degree 1. 
There is a simple and insightful connection between $\phi$--matchings and matchings on infinite random graphs. In fact, the present procedure  
builds the random graph {\em together} with a matching on it, and the matching algorithm is `online', in the sense that once an edge is added to the matching it cannot be discarded latter to optimize the matching size. 
See details in section 9 of \cite{MoyPer17}, or in \cite{SJM23}. 
\end{remark}

\subsection{Bi-infinite $\phi$-matchings at even times}

Clearly, by the stationarity assumption, Proposition \ref{prop:matchingN} can be generalized to any arbitrary starting point $m\in\Z$ instead of time 0, and we replace for any associated variable, the superscript $^{.,0}$ by $^{.,m}$. 
One is then naturally led to consider {\em bi-infinite} $\phi$-matchings, by also letting $m$ go to $-\infty$ in this construction. 
However, doing so requires more care. Indeed, adding an arrival at the beginning of an input may change the whole matching of that input. 
Specifically, for any $m\in\Z$, it is not true in general that $\mathbf M^{\{Y,m\}}_{\infty}(\phi)$ is an induced subgraph of $\mathbf M^{\{Y,m-1\}}_{\infty}(\phi)$, 
see an example for $\phi=\textsc{lcfm}$ on Figure \ref{fig:nonincrease}. 

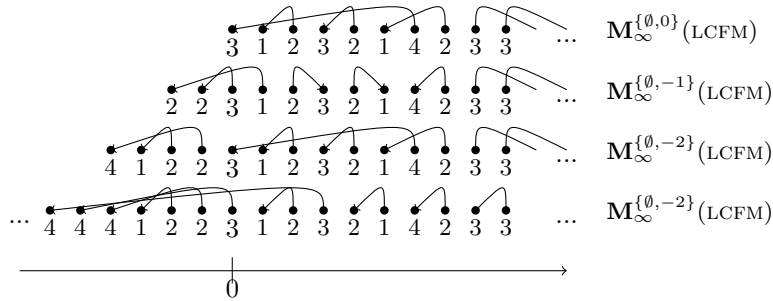
\begin{figure}[h!]
\begin{center}
\begin{tikzpicture}[scale=0.8]
\fill (0,4) circle (2pt) node[below] {${3}$};
\fill (0.5,4) circle (2pt) node[below] {\small{1}};
\fill (1,4) circle (2pt) node[below] {\small{2}};
\draw[->, thin] (1,4) .. controls +(up:0.5cm)  .. (0.5,4);
\fill (1.5,4) circle (2pt) node[below] {\small{3}};
\fill (2,4) circle (2pt) node[below] {\small{2}};
\draw[->, thin] (2,4) .. controls +(up:0.5cm)  .. (1.5,4);
\fill (2.5,4) circle (2pt) node[below] {\small{1}};
\fill (3,4) circle (2pt) node[below] {\small{4}};
\draw[->, thin] (3,4) .. controls +(up:0.5cm)  .. (0,4);
\fill (3.5,4) circle (2pt) node[below] {\small{2}};
\draw[->, thin] (3.5,4) .. controls +(up:0.5cm)  .. (2.5,4);
\fill (4,4) circle (2pt) node[below] {\small{3}};
\draw[-, thin] (4,4) .. controls +(up:0.5cm)  .. (5,4);
\fill (4.5,4) circle (2pt) node[below] {\small{3}};
\draw[-, thin] (4.5,4) .. controls +(up:0.5cm)  .. (5.5,4);
\fill (5.5,4) node[below] {...};
\fill (6,4) node[right] {\small{$\mathbf M^{\{\emptyset,0\}}_{\infty}(\textsc{lcfm})$}};
\fill (-1,3) circle (2pt) node[below] {\small{2}};
\fill (-0.5,3) circle (2pt) node[below] {\small{2}};
\fill (0,3) circle (2pt) node[below] {${3}$};
\draw[->, thin] (0,3) .. controls +(up:0.5cm)  .. (-0.5,3);
\fill (0.5,3) circle (2pt) node[below] {\small{1}};
\draw[->, thin] (0.5,3) .. controls +(up:0.5cm)  .. (-1,3);
\fill (1,3) circle (2pt) node[below] {\small{2}};
\fill (1.5,3) circle (2pt) node[below] {\small{3}};
\draw[->, thin] (1,3) .. controls +(up:0.5cm)  .. (1.5,3);
\fill (2,3) circle (2pt) node[below] {\small{2}};
\fill (2.5,3) circle (2pt) node[below] {\small{1}};
\draw[->, thin] (2,3) .. controls +(up:0.5cm)  .. (2.5,3);
\fill (3,3) circle (2pt) node[below] {\small{4}};
\fill (3.5,3) circle (2pt) node[below] {\small{2}};
\draw[->, thin] (3.5,3) .. controls +(up:0.5cm)  .. (3,3);
\fill (4,3) circle (2pt) node[below] {\small{3}};
\draw[-, thin] (4,3) .. controls +(up:0.5cm)  .. (5,3);
\fill (4.5,3) circle (2pt) node[below] {\small{3}};
\draw[-, thin] (4.5,3) .. controls +(up:0.5cm)  .. (5.5,3);
\fill (5.5,3) node[below] {...};
\fill (6,3) node[right] {\small{$\mathbf M^{\{\emptyset,-1\}}_{\infty}(\textsc{lcfm})$}};
\fill (-2,2) circle (2pt) node[below] {\small{4}};
\fill (-1.5,2) circle (2pt) node[below] {\small{1}};
\fill (-1,2) circle (2pt) node[below] {\small{2}};
\draw[->, thin] (-1,2) .. controls +(up:0.5cm)  .. (-1.5,2);
\fill (-0.5,2) circle (2pt) node[below] {\small{2}};
\draw[->, thin] (-0.5,2) .. controls +(up:0.5cm)  .. (-2,2);
\fill (0,2) circle (2pt) node[below] {${3}$};
\fill (0.5,2) circle (2pt) node[below] {\small{1}};
\fill (1,2) circle (2pt) node[below] {\small{2}};
\draw[->, thin] (1,2) .. controls +(up:0.5cm)  .. (0.5,2);
\fill (1.5,2) circle (2pt) node[below] {\small{3}};
\fill (2,2) circle (2pt) node[below] {\small{2}};
\draw[->, thin] (2,2) .. controls +(up:0.5cm)  .. (1.5,2);
\fill (2.5,2) circle (2pt) node[below] {\small{1}};
\fill (3,2) circle (2pt) node[below] {\small{4}};
\draw[->, thin] (3,2) .. controls +(up:0.5cm)  .. (0,2);
\fill (3.5,2) circle (2pt) node[below] {\small{2}};
\draw[->, thin] (3.5,2) .. controls +(up:0.5cm)  .. (2.5,2);
\fill (4,2) circle (2pt) node[below] {\small{3}};
\draw[-, thin] (4,2) .. controls +(up:0.5cm)  .. (5,2);
\fill (4.5,2) circle (2pt) node[below] {\small{3}};
\draw[-, thin] (4.5,2) .. controls +(up:0.5cm)  .. (5.5,2);
\fill (5.5,2) node[below] {...};
\fill (6,2) node[right] {\small{$\mathbf M^{\{\emptyset,-2\}}_{\infty}(\textsc{lcfm})$}};
\fill (-3,1) circle (2pt) node[below] {\small{4}};
\fill (-2.5,1) circle (2pt) node[below] {\small{4}};
\fill (-2,1) circle (2pt) node[below] {\small{4}};
\fill (-1.5,1) circle (2pt) node[below] {\small{1}};
\fill (-1,1) circle (2pt) node[below] {\small{2}};
\draw[->, thin] (-1,1) .. controls +(up:0.5cm)  .. (-1.5,1);
\fill (-0.5,1) circle (2pt) node[below] {\small{2}};
\draw[->, thin] (-0.5,1) .. controls +(up:0.5cm)  .. (-2,1);
\fill (0,1) circle (2pt) node[below] {${3}$};
\draw[->, thin] (0,1) .. controls +(up:0.5cm)  .. (-2.5,1);
\fill (0.5,1) circle (2pt) node[below] {\small{1}};
\fill (1,1) circle (2pt) node[below] {\small{2}};
\draw[->, thin] (1,1) .. controls +(up:0.5cm)  .. (0.5,1);
\fill (1.5,1) circle (2pt) node[below] {\small{3}};
\draw[->, thin] (1.5,1) .. controls +(up:0.5cm)  .. (-3,1);
\fill (2,1) circle (2pt) node[below] {\small{2}};
\fill (2.5,1) circle (2pt) node[below] {\small{1}};
\draw[->, thin] (2.5,1) .. controls +(up:0.5cm)  .. (2,1);
\fill (3,1) circle (2pt) node[below] {\small{4}};
\fill (3.5,1) circle (2pt) node[below] {\small{2}};
\draw[->, thin] (3.5,1) .. controls +(up:0.5cm)  .. (3,1);
\fill (4,1) circle (2pt) node[below] {\small{3}};
\fill (4.5,1) circle (2pt) node[below] {\small{3}};
\draw[-, thin] (4.5,1) .. controls +(up:0.5cm)  .. (4,1);
\fill (5.5,1) node[below] {...};
\fill (6,1) node[right] {\small{$\mathbf M^{\{\emptyset,-2\}}_{\infty}(\textsc{lcfm})$}};
\fill (-3.5,1) node[below] {...};
\draw[->] (-3.5,0) -- (5.5,0);
\fill (0,0) node[]{$|$} node[below]{$0$};
\end{tikzpicture}
\caption[smallcaption]{Backwards construction of the sequence  
$\left(\mathbf M^{\{\emptyset,m\}}_\infty(\phi)\right)_{m\in\Z}$ for $\phi=\textsc{lcfm}$ and $G$, 
the compatibility graph of Figure \ref{Fig:paw}. 
Adding nodes on the left of the matching may break the matches performed at the previous step. }
\label{fig:nonincrease}
\end{center}
\end{figure}

Thus the very definition of what we will call a {\em bi-infinite $\phi$-matching}, i.e., the limiting graph 
obtained when letting $m$ go to $-\infty$ in $\mathbf M^{\{Y,m\}}_{\infty}(\phi)$, is {\em a priori} problematic. 
We show hereafter that this object is well defined, at least under stability and sub-additivity assumptions. 

To see this, first observe that under the assumptions of Theorem \ref{thm:main}, the following set is also a.s. infinite, 
\[\mathscr C^*_{<0}:=\left\{n < 0\,:\, U^*_n = \emptyset\right\},\]
and denote its elements as $... < c^*_{-2} <  c^*_{-1} < 0 $. Altogether, the points $(c^*_i)_{i\in\Z}$ are the construction points of the system on $\Z$. See an example on Figure \ref{fig:construction}. 

\begin{figure}[h!]
\begin{center}
\begin{tikzpicture}[scale=0.8]
\fill (-3.5,1) node[below] {...};
\fill (-3,1) circle (2pt) node[below] {\small{4}};
\fill (-2.5,1) circle (2pt) node[below] {\small{4}};
\fill (-2,1) circle (2pt) node[below] {\small{4}};
\fill (-1.5,1) circle (2pt) node[below] {\small{1}};
\fill (-1,1) circle (2pt) node[below] {\small{2}};
\draw[->, thin] (-1,1) .. controls +(up:0.5cm)  .. (-1.5,1);
\fill (-0.5,1) circle (2pt) node[below] {\small{2}};
\draw[->, thin] (-0.5,1) .. controls +(up:0.5cm)  .. (-2,1);
\fill (0,1) circle (2pt) node[below] {${3}$};
\draw[->, thin] (0,1) .. controls +(up:0.5cm)  .. (-2.5,1);
\fill (0.5,1) circle (2pt) node[below] {\small{1}};
\fill (1,1) circle (2pt) node[below] {\small{2}};
\draw[->, thin] (1,1) .. controls +(up:0.5cm)  .. (0.5,1);
\fill (1.5,1) circle (2pt) node[below] {\small{3}};
\draw[->, thin] (1.5,1) .. controls +(up:0.5cm)  .. (-3,1);
\fill (2,1) circle (2pt) node[below] {\small{2}};
\fill (2.5,1) circle (2pt) node[below] {\small{1}};
\draw[->, thin] (2.5,1) .. controls +(up:0.5cm)  .. (2,1);
\fill (3,1) circle (2pt) node[below] {\small{4}};
\fill (3.5,1) circle (2pt) node[below] {\small{2}};
\draw[->, thin] (3.5,1) .. controls +(up:0.5cm)  .. (3,1);
\fill (4,1) circle (2pt) node[below] {\small{3}};
\fill (4.5,1) circle (2pt) node[below] {\small{3}};
\draw[-, thin] (4.5,1) .. controls +(up:0.5cm)  .. (4,1);
\fill (5.5,1) node[below] {...};
\fill (6,1) node[right] {\phantom{\small{$\mathbf M^{\{\emptyset,-2\}}_{\infty}(\textsc{lcfm})$}}};
\draw[->] (-3.5,0) -- (5.5,0);
\fill (-3,0) node[]{$\star$} node[below]{$c^*_{-1}$};
\fill (0,0) node[]{$|$} node[below]{$0$};
\fill (2,0) node[]{$\star$} node[below]{$c^*_{0}$};
\fill (3,0) node[]{$\star$} node[below]{$c^*_{1}$};
\fill (4,0) node[]{$\star$} node[below]{$c^*_{2}$};
\end{tikzpicture}
\caption[smallcaption]{Construction points of the model for $\phi=\textsc{lcfm}$ and $G$ 
the compatibility graph of Figure \ref{Fig:paw}. }
\label{fig:construction}
\end{center}
\end{figure}
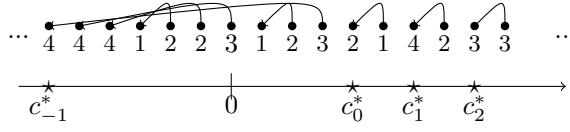 

A unique perfect, bi-infinite, $\phi$-matching can thus be obtained from $\suitez{U^*_n}$. It is formally defined as 
\begin{equation}
\label{eq:defbiinfmatch}
\mathbf M^{*,-\infty}_{\infty}(\phi) = \bigcup_{i\in \Z}\mathbf  M^{\{\emptyset,c^*_i\}}_{c^*_{i+1}}(\phi),
\end{equation} 
and induces $\mathbf M^{*,0}_{\infty}(\phi)$ as a sub-graph. It is called stationary perfect {bi-infinite} $\phi$-matching (at even times) of the model. 
A constructive definition of this object is provided hereafter. 

Observe that or any $m\in\Z$, 
for any $k\in \N$ such that $-k < m$ and any $Y \in \mathscr Y_2^{\infty}$ , the trajectory of the process 
$\suitekk{U^{\{Y\}}_n\circ\theta^{-k}}$ determines uniquely the matches from time $-k$ onwards, and in particular, the restriction of the infinite matching 
$\mathbf M^{\{Y,-k\}}_{\infty}(\phi)$ from time $m$ onwards, that is, $\mathbf M^{\{U^{\{Y,-k\}}_m,m\}}_{\infty}(\phi)$. 
From Corollary \ref{cor:bwiid}, the sequence $\left(U^{\{Y\}}_m\circ\theta^{-k}\right)_{k\in\N}$ is a.s. constant from a certain rank, and 
its limiting value is precisely $U^*_m$. Therefore the following almost sure limit is well defined,  
\[
\lim_{k\to \infty} \mathbf M^{\{U^{\{Y,-k\}}_m,m\}}_{\infty}(\phi) = \mathbf M^{\{U^{*}_m,m\}}_{\infty}(\phi)=
\mathbf M^{*,m}_{\infty}(\phi).\]
As this is true for any $m\in\Z$, we obtain hat the unique stationary bi-infinite matching on $\Z$, namely $\mathbf M^{*,-\infty}_{\infty}(\phi)$ defined by 
(\ref{eq:defbiinfmatch}), a.s. coincides with the above limit. We have proven that 

\begin{theorem}
\label{thm:biinfmatch}
Under the assumptions of Theorem \ref{thm:main},  
there exists a unique perfect bi-infinite $\phi$-matching at even times. 
It coincides almost surely with the 
stationary bi-infinite matching $\mathbf M^{*,-\infty}_{\infty}(\phi)$ defined by (\ref{eq:defbiinfmatch}), and is obtained, for any $Y \in \mathscr Y_2^{\infty}$, by constructing the infinite matching 
$\mathbf M^{\{Y,-k\}}_{\infty}(\phi)$ starting from $Y$ at time $-k$, and letting $k$ go to infinity.  
\end{theorem}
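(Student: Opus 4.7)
The plan is to split the statement into two parts -- (a) identification of the a.s.\ limit $\lim_{k\to\infty}\mathbf M^{\{Y,-k\}}_{\infty}(\phi)$ with $\mathbf M^{*,-\infty}_{\infty}(\phi)$, and (b) uniqueness of a perfect bi-infinite $\phi$-matching -- and to handle each by reducing to a statement about buffer-content sequences, for which Theorem \ref{thm:main} and its corollaries do the heavy lifting.

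For part (a), I would fix $m\in\mathbb Z$ arbitrarily and restrict attention to the items arriving at times $\ge m$. For every $k\in\mathbb N$ with $-k<m$, the portion of $\mathbf M^{\{Y,-k\}}_{\infty}(\phi)$ on that half-line is entirely determined by (i) the input from time $m$ onwards, which does not depend on $k$, and (ii) the buffer content seen at time $m$ when the system is initiated at $Y$ at time $-k$, namely $U^{\{Y\}}_{m+k}\circ\theta^{-k}$. Thus this restriction equals the infinite $\phi$-matching $\mathbf M^{\{U^{\{Y\}}_{m+k}\circ\theta^{-k},m\}}_{\infty}(\phi)$. Corollary \ref{cor:bwiid} gives the a.s.\ convergence $U^{\{Y\}}_{m+k}\circ\theta^{-k}\to U^{*}_m$ (in fact, by strong backwards coupling, the sequence is a.s.\ eventually constant), so for every fixed $m$ the restriction to times $\ge m$ stabilizes almost surely to $\mathbf M^{*,m}_{\infty}(\phi)$. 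Letting $m\to-\infty$ along a countable cofinal set and invoking the consistency of the family $\bigl(\mathbf M^{*,m}_{\infty}(\phi)\bigr)_{m\in\mathbb Z}$ via the construction points $(c^*_i)_{i\in\mathbb Z}$, the pieces glue together to give exactly $\mathbf M^{*,-\infty}_{\infty}(\phi)$ as defined in \eqref{eq:defbiinfmatch}.

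For part (b), any perfect bi-infinite $\phi$-matching determines, by the functional update rule $\odot_\phi$ and the convention of Remark \ref{rem:equiv}, a bi-infinite even-buffer-content process $\suitez{U^{\sharp}_n}$ that takes the value $\emptyset$ infinitely often in both directions and satisfies the stationary recursion \eqref{eq:recurstatUbar}. Corollary \ref{cor:uniqueiid} then forces $U^{\sharp}_n=U^{*}_n$ almost surely for every $n$, and since the input together with the buffer contents encodes all the matches performed by $\phi$, the bi-infinite matching coincides almost surely with $\mathbf M^{*,-\infty}_{\infty}(\phi)$.

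The main obstacle is the non-monotonicity pictured in Figure \ref{fig:nonincrease}: prepending arrivals at $-k-1,-k$ can break matches previously performed on the window $[m,\infty)$, so one cannot argue by monotone limits on the matching itself. The trick is that Corollary \ref{cor:bwiid} bypasses this difficulty by establishing pointwise stabilization of the \emph{buffer content} at the single time $m$, and once stabilization at $m$ has occurred, the forward input freezes the matching on $[m,\infty)$. Carrying this out simultaneously for every $m\in\mathbb Z$ then yields a coherent bi-infinite limit, avoiding any need for a monotonicity argument on the random graphs themselves.
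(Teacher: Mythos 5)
Your proposal is correct and follows essentially the same route as the paper: for each fixed $m$ the restriction of $\mathbf M^{\{Y,-k\}}_{\infty}(\phi)$ to times $\ge m$ is determined by the buffer content $U^{\{Y\}}_m\circ\theta^{-k}$, which is a.s.\ eventually constant equal to $U^*_m$ by Corollary \ref{cor:bwiid}, and the resulting restrictions glue into $\mathbf M^{*,-\infty}_{\infty}(\phi)$. Your part (b), reducing uniqueness to Corollary \ref{cor:uniqueiid} via the induced buffer-content process, makes explicit a point the paper leaves implicit, but does not change the substance of the argument.
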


\subsection{Back to the original time scale} 
Come back to the original time scale, i.e., arrivals take place one by one, and we work on the image probability space of the sequence 
$\suitez{(V_n,\Sigma_n)}$, i.e., on the sample space $(\mathcal V\times\mathcal S)^\Z$, equipped with the probability measure 
$\bigotimes_{\Z}(\mu \otimes \nu_\phi)$. Then, first, it is clear how the dynamics of the GM model $(G,\mu,\phi)$ can be used to construct finite, or infinite (on one side) $\phi$-matchings of these items arrived one by one, 
following the same procedure as above. It leads to the analog result as Proposition \ref{prop:matchingN}, only expressed on a different time scale. 

We now turn to the construction of bi-infinite $\phi$-matchings on the original time scale, considering single arrivals. 
Then observe that, due to the 2-periodicity of the model, we have two stationary versions of the natural Markov chain 
$\suitez{W_n}$, depending on whether the item entering at time 0 is of class and lists of preference $(V^0_0,\Sigma^0_0)$ (i.e. it is the first arrival of a batch of two in the even time scale) 
or $(V^1_0,\Sigma^1_0)$ (i.e. it is the second arrival of the batch). 
Specifically, 
we saw that under the assumptions of Theorem \ref{thm:main}, 
there exists a unique stationary even buffer-content sequence $\suitez{U^*_n}$ on the 
canonical space 
of $\suitez{\left(V_{2_n},\Sigma_{2n},V_{2n+1},\Sigma_{2n+1}\right)}$ and likewise, there exists a 
unique stationary even buffer-content sequence $\suitez{U^{**}_n}$ on the canonical space of 
$\suitez{\left(V_{2_n-1},\Sigma_{2n-1},V_{2n},\Sigma_{2n}\right)}$. 
We can then define the two following versions of the recursion $\suitez{W_n}$, 
namely $\suitez{W^*_n}$ and $\suitez{W^{**}_n}$, as follows: 
\begin{align}
\label{eq:defWfinal0}
&\left\{\begin{array}{ll}
W^*_{2n} &=U^*_{n}\\
W^*_{2n+1} &=\left(U^*_{n}\right) \odot_\phi \left(V_{n},\Sigma_{n}\right)
\end{array}
\right.\quad\quad n\in \mathbb Z;\\
\label{eq:defWfinal1}
&\left\{\begin{array}{ll}
W^{**}_{2n} &=\left(U^{**}_{n-1}\right) \odot_\phi \left(V_{2n-1},\Sigma_{2n-1}\right)\\
W^{**}_{2n+1} &=U^{**}_{n},
\end{array}
\right.\quad\quad n\in \mathbb Z,
\end{align}
which correspond respectively to a buffer content sequence that is stationary on $\mathbb W_2$ at even times, and to a buffer 
content sequence that is stationary on $\mathbb W_2$ at odd times. 
By construction, both sequences $\suitez{W^*_n}$ and $\suitez{W^{**}_n}$ have infinitely many construction points (only at even - respectively, odd - times), 
therefore we can construct from each one, a unique bi-infinite perfect $\phi$-matching on the sample space of 
$\suitez{\left(V_{n},\Sigma_{n}\right)}$, having construction points only at even (resp., odd) times. 

Then, following the same steps as in the arguments leading to Theorem 3, these two matchings are explicitly constructed as follows: 
The first matching is obtained 
by constructing (now on the original time scale, with single arrivals), 
the infinite matching $\mathbf M^{\{\emptyset,-2k\}}_{\infty}(\phi)$ starting from $\emptyset$ 
(or any initial condition $Y \in \mathscr Y_2^{\infty}$) 
at time $-2k$, and letting $k$ go to infinity. The second one, by constructing the infinite matching $\mathbf M^{\{\emptyset,-2k+1\}}_{\infty}(\phi)$ starting from $\emptyset$ 
(or any $Y \in \mathscr Y_2^{\infty}$) at time $-2k+1$, and letting $k$ go to infinity. To summarize, 

\begin{corollary}
\label{cor:biinfdouble}
Let $(G,\mu,\phi)$ be a GM model satisfying the assumptions of Theorem \ref{thm:main}. 
Then, on the original time scale there exist exactly two perfect bi-infinite $\phi$-matchings. They coincide a.s. respectively with the 
two stationary perfect bi-infinite $\phi$-matchings induced by the two sequences $\suitez{W^*_n}$ and $\suitez{W^{**}_n}$ defined by (\ref{eq:defWfinal0}) and (\ref{eq:defWfinal1}), 
and can be obtained, for any $Y \in \mathscr Y_2^{\infty}$, by constructing 
the infinite matchings $\mathbf M^{\{Y,-2k\}}_{\infty}(\phi)$ and $\mathbf M^{\{Y,-2k+1\}}_{\infty}(\phi)$ starting from $Y$ respectively at time $-2k$ and $-2k+1$, and letting $k$ go to infinity.  
\end{corollary}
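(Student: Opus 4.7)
The plan is to apply Theorem \ref{thm:biinfmatch} twice, on the two natural canonical spaces associated with the two possible groupings of single arrivals into consecutive pairs, and then observe that the $2$-periodicity of the model forces every perfect bi-infinite $\phi$-matching on the original time scale to belong to exactly one of the two resulting families.

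First I would introduce, alongside the canonical space $\mathscr Q^0$ of Sub-section \ref{subsec:statergo}, a second canonical space $\mathscr Q^{00}$ built from the generic sample $\suitez{(V_{2n-1},\Sigma_{2n-1},V_{2n},\Sigma_{2n})}$ instead of $\suitez{(V_{2n},\Sigma_{2n},V_{2n+1},\Sigma_{2n+1})}$. Under the IID assumption on the input, $\mathscr Q^{00}$ is again stationary ergodic, so Theorem \ref{thm:main} applies verbatim on it and produces a unique stationary solution $\suitez{U^{**}_n}$ to the analog of (\ref{eq:recurstatUbar}). The sequences $\suitez{W^*_n}$ and $\suitez{W^{**}_n}$ defined by (\ref{eq:defWfinal0}) and (\ref{eq:defWfinal1}) are then easily checked to solve $W_{n+1}=W_n\odot_\phi(V_n,\Sigma_n)$ for every $n\in\Z$, the former being stationary at even times and the latter at odd times.

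Next, existence of two perfect bi-infinite $\phi$-matchings follows directly from Theorem \ref{thm:biinfmatch} applied on each canonical space. On $\mathscr Q^0$ the construction-point set is a.s.\ infinite, so concatenating the $\phi$-matchings between consecutive construction points produces the first perfect bi-infinite $\phi$-matching $\mathbf M^{*,-\infty}_\infty(\phi)$ of (\ref{eq:defbiinfmatch}); the same procedure performed on $\mathscr Q^{00}$ yields the second one, whose construction points lie at odd times.

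The main subtle point is uniqueness: I need to argue that any perfect bi-infinite $\phi$-matching must belong to one of these two families. The key observation is that since matches come in pairs, between any two construction points $c<c'$ of a perfect bi-infinite $\phi$-matching the number of arrivals $c'-c$ is necessarily even, so all construction points share a common parity. Each of the two possible parities then corresponds to a stationary solution (on $\mathscr Q^0$ or $\mathscr Q^{00}$) of the even buffer-content recursion, which is unique by Theorem \ref{thm:main} and Corollary \ref{cor:uniqueiid}; hence exactly two such matchings exist. Finally, the backwards construction is a direct translation of Theorem \ref{thm:biinfmatch}: for any $Y\in\mathscr Y_2^{\infty}$, the infinite matching $\mathbf M^{\{Y,-2k\}}_\infty(\phi)$ built on the original time scale coincides, after pair-grouping, with the one built on $\mathscr Q^0$ from $Y$ at time $-k$, and Corollary \ref{cor:bwiid} ensures that beyond the a.s.\ finite coupling time it agrees with the stationary matching associated with $\suitez{W^*_n}$; the analogous argument on $\mathscr Q^{00}$ handles $\mathbf M^{\{Y,-2k+1\}}_\infty(\phi)$.
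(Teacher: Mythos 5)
Your proposal is correct and follows essentially the same route as the paper: two canonical spaces for the two pair-groupings, Theorem \ref{thm:main} applied on each to get $\suitez{U^*_n}$ and $\suitez{U^{**}_n}$, the definitions (\ref{eq:defWfinal0})--(\ref{eq:defWfinal1}), and the backwards construction of Theorem \ref{thm:biinfmatch} transposed to each time shift. Your explicit parity argument for why there are \emph{exactly} two (construction points of a perfect matching are separated by an even number of arrivals, hence share a common parity) is a welcome elaboration of a step the paper only asserts via ``the 2-periodicity of the model.''
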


See an example on Figure \ref{Fig:perfectmatch}. 

\begin{figure}[h!]
\begin{center}
\begin{tikzpicture},1
\draw[-] (0,1) -- (11,1);
\draw[-] (0,0) -- (11,0);
\draw[->, thin] (0,0) .. controls +(up:0.5cm)  .. (0.5,0);
\fill (0,1) circle (2pt) node[below] {\small{4}};
\fill (0.5,1) circle (2pt) node[below] {\small{2}};
\draw[->, thin] (-0.5,1) .. controls +(up:0.5cm)  .. (0.5,1);
\fill (0,0) circle (2pt) node[below] {\small{4}};
\fill (0.5,0) circle (2pt) node[below] {\small{2}};
\fill (1,1) circle (2pt) node[below] {\small{3}};
\draw[->, thin] (0,1) .. controls +(up:0.5cm)  .. (1,1);
\fill (1.5,1) circle (2pt) node[below] {\small{2}};
\draw[->, thin] (1.5,1) .. controls +(up:0.5cm)  .. (2,1);
\fill (1,0) circle (2pt) node[below] {\small{3}};
\draw[->, thin] (1,0) .. controls +(up:0.5cm)  .. (1.5,0);
\fill (1.5,0) circle (2pt) node[below] {\small{2}};
\fill (2,1) circle (2pt) node[below] {\small{1}};
\fill (2.5,1) circle (2pt) node[below] {\small{1}};
\draw[->, thin] (2.5,1) .. controls +(up:0.5cm)  .. (3.5,1);
\fill (2,0) circle (2pt) node[below] {\small{1}};,1
\draw[->, thin] (2,0) .. controls +(up:0.5cm)  .. (3.5,0);
\fill (2.5,0) circle (2pt) node[below] {\small{1}};
\fill (3,1) circle (2pt) node[below] {\small{3}};
\draw[->, thin] (3,1) .. controls +(up:0.5cm)  .. (4,1);
\fill (3.5,1) circle (2pt) node[below] {\small{2}};
\fill (3,0) circle (2pt) node[below] {\small{3}};
\draw[->, thin] (3,0) .. controls +(up:0.5cm)  .. (4,0);
\fill (3.5,0) circle (2pt) node[below] {\small{2}};
\fill (4,1) circle (2pt) node[below] {\small{4}};
\fill (4.5,1) circle (2pt) node[below] {\small{3}};
\draw[->, thin] (4.5,1) .. controls +(up:0.5cm)  .. (5,1);
\fill (4,0) circle (2pt) node[below] {\small{4}};
\draw[->, thin] (4.5,0) .. controls +(up:0.5cm)  .. (5,0);
\fill (4.5,0) circle (2pt) node[below] {\small{3}};
\fill (5,1) circle (2pt) node[below] {\small{4}};
\fill (5.5,1) circle (2pt) node[below] {\small{1}};
\draw[->, thin] (5.5,1) .. controls +(up:0.5cm)  .. (7.5,1);
\fill (5,0) circle (2pt) node[below] {\small{4}};
\draw[->, thin] (6.5,0) .. controls +(up:0.5cm)  .. (7,0);
\fill (5.5,0) circle (2pt) node[below] {\small{1}};
\fill (6,1) circle (2pt) node[below] {\small{1}};
\draw[->, thin] (6.5,1) .. controls +(up:0.5cm)  .. (7,1);
\fill (6.5,1) circle (2pt) node[below] {\small{3}};
\fill (6,0) circle (2pt) node[below] {\small{1}};
\draw[->, thin] (2.5,0) .. controls +(up:0.5cm)  .. (7.5,0);
\fill (6.5,0) circle (2pt) node[below] {\small{3}};
\fill (7,1) circle (2pt) node[below] {\small{4}};
\draw[->, thin] (6,1) .. controls +(up:0.5cm)  .. (8,1);
\fill (7.5,1) circle (2pt) node[below] {\small{2}};
\fill (7,0) circle (2pt) node[below] {\small{4}};
\draw[->, thin] (5.5,0) .. controls +(up:0.5cm)  .. (8,0);
\fill (7.5,0) circle (2pt) node[below] {\small{2}};
\fill (8,1) circle (2pt) node[below] {\small{2}};
\fill (8.5,1) circle (2pt) node[below] {\small{1}};
\draw[->, thin] (8.5,1) .. controls +(up:0.5cm)  .. (9.5,1);
\fill (8,0) circle (2pt) node[below] {\small{2}};
\draw[->, thin] (8.5,0) .. controls +(up:0.5cm)  .. (9.5,0);
\fill (8.5,0) circle (2pt) node[below] {\small{1}};
\fill (9,1) circle (2pt) node[below] {\small{4}};
\draw[->, thin] (9,1) .. controls +(up:0.5cm)  .. (10,1);
\fill (9.5,1) circle (2pt) node[below] {\small{2}};
\fill (9,0) circle (2pt) node[below] {\small{4}};
\draw[->, thin] (9,0) .. controls +(up:0.5cm)  .. (10,0);
\fill (9.5,0) circle (2pt) node[below] {\small{2}};
\fill (10,1) circle (2pt) node[below] {\small{3}};
\fill (10.5,1) circle (2pt) node[below] {\small{2}};
\draw[->, thin] (10.5,1) .. controls +(up:0.5cm)  .. (11,1);
\fill (10,0) circle (2pt) node[below] {\small{3}};
\draw[->, thin] (10.5,0) .. controls +(up:0.5cm)  .. (11,0);
\fill (10.5,0) circle (2pt) node[below] {\small{2}};
\fill (11,1) circle (2pt) node[below] {\small{1}};
\fill (11,0) circle (2pt) node[below] {\small{1}};
\draw[->] (-0.5,-1) -- (11.5,-1);
\fill (5.5,-1) node[]{$|$} node[below]{$0$};
\end{tikzpicture}
\caption[smallcaption]{Two perfect bi-infinite {\sc fcfm}- (or {\sc ml}-) matchings corresponding to the graph of 
Figure \ref{Fig:paw} and the same input.}
\label{Fig:perfectmatch}
\end{center}
\end{figure}
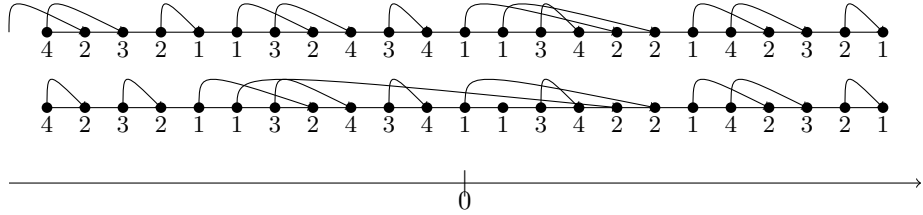

\subsection{Perfect {\sc fcfm}-matchings in reverse time}
\label{FCFMreverse}
If the matching policy is {\sc fcfm}, perfect bi-infinite matchings have an interesting property. 
First, observe that we can complete the `exchange" mechanism introduced in Section 3 of \cite{MBM21}, using construction points as follows: 
Start from a construction point, and then replace all items from left to the right by the copy of the class of their match, on the fly, as soon as they are matched (see again the construction in Section 3 of \cite{MBM21}). 
We illustrate this procedure in Figure \ref{Fig:exchange}, by the completion of the exchanges over two perfectly matched blocks, for the compatibility graph of Figure \ref{Fig:paw}, on a given arrival scenario. 
\begin{figure}[h!]
\begin{center}
\begin{tikzpicture}[scale=0.8]
%
\draw[-] (-1,1) -- (11,1);
\fill (0,1) circle (2pt) node[below] {\small{1}};
\draw[-, thin] (0,1) .. controls +(up:0.5cm)  .. (3,1);
\fill (1,1) circle (2pt) node[below] {\small{3}};
\draw[-, thin] (1,1) .. controls +(up:0.5cm)  .. (2,1);
\fill (2,1) circle (2pt) node[below] {\small{4}};
\fill (3,1) circle (2pt) node[below] {\small{$2$}};
\fill (4,1) circle (2pt) node[below] {\small{3}};
\draw[-, thin] (4,1) .. controls +(up:0.5cm)  .. (7,1);
\fill (5,1) circle (2pt) node[below] {\small{1}};
\draw[-, thin] (5,1) .. controls +(up:0.5cm)  .. (8,1);
\fill (6,1) circle (2pt) node[below] {\small{3}};
\draw[-, thin] (6,1) .. controls +(up:0.5cm)  .. (10,1);
\fill (7,1) circle (2pt) node[below] {\small{2}};
\fill (8,1) circle (2pt) node[below] {\small{2}};
\fill (9,1) circle (2pt) node[below] {\small{1}};
\draw[-, thin] (9,1) .. controls +(up:0.5cm)  .. (11,1);
\fill (10,1) circle (2pt) node[below] {\small{4}};
\fill (11,1) circle (2pt) node[below] {\small{2}};
\draw[-] (-1,-0.5) -- (11,-0.5);
\fill (0,-0.5) circle (2pt) node[below] {\small{$\bar 2$}};
\draw[-, thin] (0,-0.5) .. controls +(up:0.5cm)  .. (3,-0.5);
\fill (1,-0.5) circle (2pt) node[below] {\small{$\bar 4$}};
\draw[-, thin] (1,-0.5) .. controls +(up:0.5cm)  .. (2,-0.5);
\fill (2,-0.5) circle (2pt) node[below] {\small{$\bar 3$}};
\fill (3,-0.5) circle (2pt) node[below] {\small{$\bar 1$}};
\fill (4,-0.5) circle (2pt) node[below] {\small{$\bar 2$}};
\draw[-, thin] (4,-0.5) .. controls +(up:0.5cm)  .. (7,-0.5);
\fill (5,-0.5) circle (2pt) node[below] {\small{$\bar 2$}};
\draw[-, thin] (5,-0.5) .. controls +(up:0.5cm)  .. (8,-0.5);
\fill (6,-0.5) circle (2pt) node[below] {\small{$\bar 4$}};
\draw[-, thin] (6,-0.5) .. controls +(up:0.5cm)  .. (10,-0.5);
\fill (7,-0.5) circle (2pt) node[below] {\small{$\bar 3$}};
\fill (8,-0.5) circle (2pt) node[below] {\small{$\bar 1$}};
\fill (9,-0.5) circle (2pt) node[below] {\small{$\bar 2$}};
\draw[-, thin] (9,-0.5) .. controls +(up:0.5cm)  .. (11,-0.5);
\fill (10,-0.5) circle (2pt) node[below] {\small{$\bar 3$}};
\fill (11,-0.5) circle (2pt) node[below] {\small{$\bar 1$}};
\end{tikzpicture}
\caption[smallcaption]{Top: Two blocks matched in 
{\sc fcfm}. Bottom: completion of the exchanges by matchings.} 
\label{Fig:exchange}
\end{center}
\end{figure}
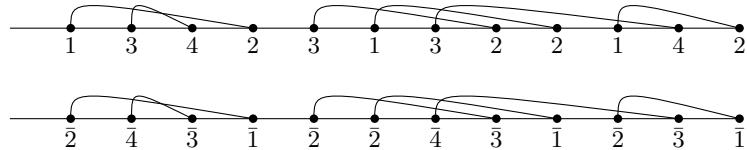

Now observe the following: between any pair of construction points (if any), after completion of the exchanges on the perfectly matched block by {\sc fcfm}, 
by reading the arrivals on the matched block from right to left, we see nothing but a {\sc fcfm} matching of the items of classes in 
$\td\maV$ (the set of copies of classes in $\maV$). To prove this, let the four nodes $i$, $j$, $k$ and $\ell$ be such that in $G$, $i \v k$, $j \v k$ and $i \v \ell$, 
and suppose that, after the exchange, four copies $\td i$, $\td j$, $\td k$ and $\td{\ell}$ are read in that order, in reverse time, i.e. from right to left. 
Let us also assume that the {\sc fcfm} rule in reverse time is violated on this quadruple: then 
the $\td k$ item is matched with the $\td j$ item while the $\td i$ item is still unmatched, and then the latter item is matched with the $\td{\ell}$ item. This occurs if and only if, in direct time, the four items of 
classes $i$, $j$, $k$ and $\ell$ arrive in that order, and the $k$ item choses the $j$ item over the $i$ item for its match, and then the unmatched $i$ item is matched with the $\ell$ item. 
This violates in turn the {\sc fcfm} policy, according to which the $k$ item should have been matched with the $i$ item instead of the $j$ item. Hence the assertion above: over any perfectly matched block 
in {\sc fcfm}, the block of exchanged items read in reverse time is also perfectly matched in {\sc fcfm} - see the bottom display of Figure \ref{Fig:exchange}. 

Now assume that $G$ is non-bipartite and that $\mu\in \textsc{Ncond}(G)$. Hence, there exists a.s. infinitely many construction points on $\mathbb Z$, and 
exactly two perfect bi-infinite {\sc fcfm}-matchings of the entered items. Generalizing the above observation to all perfectly matched blocks on $\mathbb Z$ between pairs of construction points, as {\sc fcfm} is sub-additive we conclude from Corollary \ref{cor:biinfdouble} that there exist exactly two perfect 
bi-infinite {\sc fcfm}-matchings of the exchanged items in reverse time, corresponding respectively to the two aforementioned perfect {\sc fcfm}-matchings in direct time, after complete exchanges over blocks, read from right to left. 

\section[Perfect simulation]{A perfect simulation algorithm for finite-capacity systems}
\label{sec:PW}
The celebrated  ``Coupling From The Past" (CFTP for short) algorithm introduced by Propp and Wilson \cite{PW96} allows to sample values of the considered Markov chain (here, the buffer content process $\suite{W_n}$) from its stationary distribution, even if the latter is not know explicitly. 
As formalized in Section 4 of \cite{FT98}, CFTP, and in particular, the coalescence of the Markov chains corresponding to a given set of initial conditions, can be phrased in terms of strong backwards coupling convergence of stochastic recursions, as in the present framework. We show hereafter how the sub-additivity of matching models can allow to construct a simple CFTP algorithm, for models 
having a finite buffer. 

\subsection{A finite-buffer system}
\label{subsec:modelfinite}
Consider a finite-buffer version of the (even) system at hand: namely, we suppose that there is a finite buffer 
of size $2C$, where $C\in\N_+$. Then, the matching model has exactly the dynamics that is specified in Section \ref{sec:model}, aside from the following situation: whenever the buffer reaches its capacity $2C$ and a group of two successive incoming items are incompatible with any element in the buffer and with one another, they are both discarded from the system. In detail, suppose that the buffer is full, and that 
two entering item of respective classes, say, $i$ and then $j$, are proposed for matching to the elements in the buffer. Then we face the following alternative: 
\begin{enumerate}
\item If the $i$-item finds a match in the buffer, then the corresponding match is executed right away and creates a new free spot in the buffer. Then: 
	\begin{enumerate}
	\item[(1a)] either the $j$-item also finds a match in the buffer, and the later is executed right away. 
	\item[(1b)] else, the $j$-item is stored in the buffer, occupying the new spot.  
	\end{enumerate}
\item Else, the $i$-item is not matched, but is blocked. Then, we investigate whether $j$ is matchable in the buffer or with the $i$-item, following the prescribed matching policy, and: 
	\begin{enumerate}
	\item[(2a)] If the $j$-item is matched with the $i$-item, 
	the two incoming items are matched together and leave the system;
	\item[(2b)] If the $j$-item is matched with another item in the buffer, this creates a free spot, and the $i$-item 
	is stored in it. 
	\item[(2c)] If the $j$-item is not matchable at all, then the two incoming items cannot be stored in the (full) buffer, 
	and are both discarded right away. 
	\end{enumerate}
\end{enumerate}
We append a superscript $C$ to the processes relative to this modified model. In particular, we let 
$U^C_n$ be the even buffer content at time $n$, and let $W^{C}_\phi(v)$ be the buffer content of the system fed by the input $v$.

\subsection{Perfect simulation at even times}

It is clear that $\suite{U^C_n}$ is a Markov chain of finite state space $\mathbb W_2(C)$. 
As it is clearly irreducible, $\suite{U^C_n}$ is ergodic on the finite state space $\mathbb W_2(C)$. However, determining explicitly the stationary distribution of that chain is a priori out of reach for a general matching policy $\phi$. 
On them aims at the construction of a perfect sampling algorithm by a CFTP algorithm, whose general procedure is as follows: simulating versions of the Markov chain $\suite{U^C_n}$ starting from {all} possible initial states in $\mathbb W_2(C)$, and fed by the same input, by starting at a time point that is far away in the past, to guarantee that all versions of the Markov chain coalesce (and then, coincide forever) before time 0. Then, see e.g. \cite{PW96,FT98}, the value of any version of the chains at time zero provides a sample from the stationary distribution of the even buffer content of the model. Then, re-iterating this procedure for a large number of samples then allows the estimation of mean characteristics of the system in steady state, e.g. using Monte Carlo methods. 

It is also well known that such CFTP algorithm starting from {\em all} possible states becomes prohibitively hard in terms of complexity, as the size of the state space gets large. Instead, one is typically led to propose other, simpler, perfect simulation algorithms. 
The typical cases in the literature where such simplified algorithms exist are the monotone CFTP of Propp and Wilson \cite{PW96}, the Dominated Coupling From the Past of Kendall (see e.g. \cite{Ken98,KM00}), or algorithms based on bounding chains \cite{huber2004}. 
But it is immediate to observe that, in the present model, the chain $\suite{U_n}$ is {\em not} monotonic, and in fact, it is {\em a priori} hard to construct an ordering on the sate space $\mathbb W_2(p)$ that would allow the domination or the bounding by a simpler chain. Instead, we use an approach that is closely related to the control method of \cite{MM22}, and to the small set approach of  \cite{MG98,Wil00}: Checking that the arrival scenario includes strong erasing words, guaranteeing that all versions of the Markov chain coalesce to the empty state. Then, the unique version of the chain can be simulated from that instant on, instead of simulating all trajectories of the chains for all possible initial values. 


Specifically, suppose that $(G,\phi)$ admits strong erasing words, and suppose that we dispose of a dictionary  
$\mathscr S_{2}(p)$ of $2C$-strong erasing words of length $2p$. Let us consider Algorithm \ref{algo1}. 
\begin{algorithm}
\caption{Simulation of the stationary probability of $U$ - Matching model with finite buffer.}
\KwData {A probability distribution $\nu$ on $\mathbb{V}$, a set $\mathscr S_2(p)$ of strong erasing words of length $2p$} 
$z\gets \emptyset$ \;
\For{$j \gets -p \ \KwTo \ 1 $}
{{ \bfseries{draw} $(v_{2j},\zeta_{2j},v_{2j+1},\zeta_{2j+1})$  \bfseries{from} $\mu \otimes \nu_\phi\otimes\mu\otimes\nu_\phi$ \;}
$z \gets zv_j$ \tcc*[l]{We construct the arrival scenario from time $-2p$ to time $-1$ and set $z=z_1\cdots z_{2p}=v_{-2p}\cdots v_{-1}$ }}
$i \gets -p$ \;
\While{$z \,\mbox{{\em is not an element of $\mathscr S_2(p)$}}$}
{ $i \gets i-1$ \;
\bfseries{draw} $(v_{2i},\zeta_{2i},v_{2i+1},\zeta_{2j+1})$  \bfseries{from} $\mu \otimes \nu_\phi\otimes\mu\otimes\nu_\phi$  \;
$z \gets v_{2i}v_{2i+1}z_2z_3\cdots z_{2p-2}$ \tcc*[l]{We update the last $2p$ arrivals backwards in time, $2$ by $2$, until we reach the first strong erasing word of length $2p$}}
$i \gets i+p$ \;
$U \gets \emptyset $ \tcc*[l]{We reset the system just after the arrival of the strong erasing word, and assign to $U$ the empty set.}
\While{$i < 0$}
{$U \longleftarrow  \left(U \odot_\phi (v_{2i},\zeta_{2i})\right) \odot_\phi (v_{2i+1},\zeta_{2i+1}) $ \tcc{ We now transition the chain $U$ at even times until time $0$.}
$i \longleftarrow i+1$ \;}
\KwRet{$U$}.\\
\text{ }\\
\label{algo1}
\end{algorithm}

Then we have the following result, 

\begin{theorem}
\label{thm:mainC}
Suppose that for some $p\in\N_+$, there exists a $2C$-strong erasing word of $\mathbb W_2(p)$. 
Then, the Markov chain $\suite{U_n}$ is uniformly ergodic. Moreover, Algorithm \ref{algo1} terminates almost surely, and its output is sampled from the stationary distribution $\pi$ on 
$\mathbb W_2(C)$. 
\end{theorem}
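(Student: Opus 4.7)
The plan is to verify the three assertions (uniform ergodicity, almost-sure termination, and correctness of the sampling) in order, all resting on a single observation: an occurrence of a $2C$-strong erasing word $z = z_1 \cdots z_{2p} \in \mathscr S_2(p)$ in the input depletes the finite-buffer chain $\suite{U^C_n}$ to $\emptystate$ in $p$ even-time steps, regardless of its starting state in $\W_2(C)$. I would first record the key preliminary fact that, by property (ii) of Definition \ref{def:strongerase}, along such a trajectory the buffer-size cap $2C$ is never exceeded, so the finite-buffer dynamics $U^C$ coincide with the infinite-buffer ones $W_\phi$ throughout. This is what lets us transfer Definition \ref{def:strongerase} directly to the finite-buffer model, and is really the only subtle compatibility issue of the proof.

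For \textbf{uniform ergodicity}, the state space $\W_2(C)$ is finite, so I would verify a one-step Doeblin minorization. Because $\mu$ has full support and the input is i.i.d., the event that the first $p$ even-time arrivals realize the pattern $z$ has probability $\alpha := \prod_{k=1}^{2p} \mu(z_k) > 0$, independently of the drawn lists of preferences. On this event, property (i) of Definition \ref{def:strongerase} yields $U^C_p = \emptystate$ from every $U^C_0 = w \in \W_2(C)$, hence $\mathbb P(U^C_p = \emptystate \mid U^C_0 = w) \ge \alpha$ uniformly in $w$. Geometric uniform ergodicity then follows from classical Doeblin theory.

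For \textbf{almost-sure termination}, I would partition the past (in even time, going backwards) into disjoint blocks of $p$ steps and let $\mathcal E_k$ be the event that the $k$-th such block of arrivals matches $z$. The events $\left(\mathcal E_k\right)_{k\in\N_+}$ are i.i.d.~with probability $\alpha > 0$, so by the second Borel--Cantelli lemma one of them occurs a.s.~in finite time, with geometric tail on the hitting time. Since the check-window of Algorithm \ref{algo1} shifts backward by one even-time step at each iteration, it will contain $z$ after a.s.~finitely many iterations, and the while-loop halts.

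For \textbf{correctness}, this is the classical Propp--Wilson CFTP argument. Let $T$ be the (random) even-time instant at which the algorithm halts, so that the arrivals over $\llbracket T, T+p-1 \rrbracket$ form a $2C$-strong erasing word. By property (i) of Definition \ref{def:strongerase}, for every $w \in \W_2(C)$ the finite-buffer chain started from $w$ at time $T$ reaches $\emptystate$ at time $T+p$; from $T+p$ onwards all such chains, being driven by the same input, coincide with the chain that Algorithm \ref{algo1} simulates forward from $\emptystate$ at time $T+p$. Applying this coupling in particular to $w = U^{C,*}_T$, where $\suitez{U^{C,*}_n}$ is the stationary version of $\suite{U^C_n}$ provided by uniform ergodicity, one gets that the output of the algorithm equals $U^{C,*}_0$ almost surely, and hence has distribution $\pi$. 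The main conceptual obstacle is really the coalescence step in this last paragraph: it must be formulated so as to be compatible with the finite-buffer truncation, which is exactly what property (ii) of Definition \ref{def:strongerase} enables; once that point is secured, the rest is standard Doeblin and CFTP bookkeeping.
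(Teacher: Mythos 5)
Your proof is correct and follows essentially the same route as the paper's: the key step in both is that property (ii) of Definition \ref{def:strongerase} keeps the finite-buffer dynamics identical to the infinite-buffer ones along the erasing word, so that property (i) forces coalescence to $\emptystate$ from every state of $\mathbb W_2(C)$, after which termination and correctness follow from the i.i.d.\ structure of the input. The only difference is one of presentation: where the paper delegates the perfect-sampling correctness and the equivalence with uniform ergodicity to Theorems 4.1 and 4.2 of \cite{FT98}, you make the Doeblin minorization and the CFTP coupling with the stationary version explicit.
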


\begin{proof}
Let $z$ be a $2C$-strong erasing word of length $2p$ for $(G,\phi)$. For all $w\in\mathbb W_2(C)$ and for all compatible 
$\zeta,\zeta'\in\mathcal S^*$, 
from (ii) of Definition \ref{def:strongerase} we get that 
\[\left|W_\phi(wz_1\cdots z_{2r},\zeta\zeta'_{1}\cdots\zeta'_{2r})\right| \le \left|W_\phi(w,\zeta)\right|\le 2C,\] 
for any even prefix $z_1\cdots z_{2r}$ of $z$. In particular, the $2C$-finite buffer system operates exactly like the corresponding infinite buffer system 
along the input $(z,\zeta')$, because no element is lost because of a full buffer. In particular, conditional on the input being $(z,\zeta')$ over the time interval $\llbracket 1,2p \rrbracket$, with obvious notation 
we get that 
\begin{equation}
\label{eq:blade}
U^{\{w\},C}_{p}=U^{\{w\}}_{p}=W_\phi(wz,\zeta\zeta')=\emptyset,
\end{equation}
applying (i) of Definition \ref{def:strongerase} in the last equality. Let us now set 
\[N^-:=\inf\left\{n\ge 0:\mbox{for some }p,\,V_{-2n}V_{-2n+1}\cdots V_{-2(n-p)-1} \mbox{ is a $2C$-strong erasing word for $(G,\phi)$}\right\},\]
which is set to $\infty$ if the above set is empty. 
Then any model started before time $-2N^-$ is empty at time $-2(N^--p)$. This implies that 
for all $n\ge N^-$, for all $w,w'\in \mathbb W_2(C)$, we get that 
\[U^{\{w\},C}_n\circ\theta^{-n} = U^{\{w'\},C}_n\circ\theta^{-n}=U^{\{\emptyset\},C}_n\circ\theta^{-(N^--p)},\]
that is, the buffer content at time 0 of a system started empty at time $-(N^--p)$ (on the even time scale). 
In other words, $N^-$ is a backwards coalescence time for the $2C$-finite buffer system. Clearly, from the IID assumptions, it is a.s. finite whenever 
$(G,\phi)$ admits a $2C$-strong erasing words. In that case, the backwards coalescence time is successful (in the terminology of \cite{FT98}), and so 
the output of Algorithm \ref{algo1}, which is precisely $U^{\{\emptyset\},C}_n\circ\theta^{-(N^--p)}$, is sampled from the unique stationary 
distribution of $\suite{U_n}$ from Theorem 4.1 in \cite{FT98}. From Theorem 4.2. in \cite{FT98}, this is equivalent to saying that the chain is uniformly ergodic. 
\end{proof}

Algorithm \ref{algo1} terminates as soon as $2p$ consecutive arrivals form a $2C$-strong erasing word of our dictionary 
$\mathscr S_2(p)$. Then (and only then), the buffer content at even times $\suite{U_n}$ is simulated until time zero, and its value at that time is precisely distributed from the stationary distribution of $\suite{U_n}$. Clearly, the efficiency of Algorithm \ref{algo1} resides in the 
ability to determine the largest possible dictionary $\mathscr S_2(p)$ of $2C-$strong erasing words of length $2p$. For this, one is led to apply combinatorial arguments similar to the proof of Proposition \ref{pro:erasing2}. 
Clearly, the very existence of such a dictionary $\mathscr S_2(p)$ and its construction are highly dependent on the graph geometry, and a more complete study in that direction is left for future research. 

Notice, however, that the argument in the proof of Proposition \ref{pro:erasing2} suggests a crucial shortcut to the `brute force' numerical task of testing whether {all} arrival scenarios of length $2p$ in a given set of interest are $2C$-erasing words. If $\phi$ is sub-additive, it is indeed sufficient to fix a given length $q$, and checking the $2$-strong erasing property for all words of length $2q$ in a given subset of words of interest. Then, 
all combinations $z^1\cdots z^C$ of $C$ (possible equal) such $2$-strong erasing words is itself, a $2C$-strong erasing word of length $2Cq$ for $(G,\phi)$. Then, we can apply Algorithm \ref{algo1} to $p=Cq$, and taking as dictionary $\mathscr S_{2}(Cq)$, the set of all such combinations of $2$-strong erasing words.

\providecommand{\bysame}{\leavevmode\hbox to3em{\hrulefill}\thinspace}
\providecommand{\MR}{\relax\ifhmode\unskip\space\fi MR }
\providecommand{\MRhref}[2]{%
  \href{http://www.ams.org/mathscinet-getitem?mr=#1}{#2}
}
\providecommand{\href}[2]{#2}

\newpage
\appendix

\section{Proof of Proposition \ref{pro:erasing}}
\label{sec:appendixA}
As will be clear below, the arguments of this proof do not depend on the drawn lists of preferences, as long as they are fixed upon arrival. For notational convenience, we thus skip this parameter from all notations 
(i.e. we write for instance $W_\phi(u)$ instead of $W_\phi(u,\varsigma)$, and so on). We first show that any admissible word of size 2 admits an erasing word $y$; so let us consider a word $ij$ where $i \pv j$. 
%


\medskip

As $G$ is connected, $i$ and $j$ are connected at distance, say, $p\ge 2$, i.e. there exists a shortest path $i \v i_1 \v ... \v i_{p-1} \v j$ connecting $i$ to $j$. 
If $p$ is odd, then just set $y=i_1i_2...i_{p-1}$. Clearly, $W_\phi(w)=\emptyset$ and as the path has no short-cut, in $M_\phi(ijz)$ $i_1$ is matched with $i$, $i_3$ is matched with $i_2$, and so on, 
until $i_{p-1}$ is matched with $j$. So $W_\phi(ijy)=\emptyset$, and (\ref{eq:deferase}) follows. 

We now assume that $p$ is even. Set $y^1=i_1i_2...i_{p-1}i_{p-1}$. Then,  
in $M_\phi(ijy^1)$ $i_1$ is matched with $i$, $i_3$ with $i_2$, and so on, until both $j$ and $i_{p-2}$ are matched with an $i_{p-1}$ item. So $W_\phi(ijy^1)=\emptyset$, however 
$W_\phi(y^1) = i_{p-1}i_{p-1}$. But as $G$ is non-bipartite, it contains an odd cycle. Thus (see e.g. the proof of Lemma 3 in \cite{MoyPer17}) there necessarily exists an {\em induced} odd cycle in $G$ (meaning that no shortest path exist between two of its nodes), 
say of length $2r+1$, $r \ge 1$. As $G$ is connected, there exists a path connecting $i_{p-1}$ to any element of the latter cycle. Take the shortest one (which may intersect with the path between $i$ to $j$, or coincide with a part of it), and denote it $i_{p-1} \v j_1 \v j_2 \v ... \v j_q \v k_1$, where $k_1$ is the first element of the latter path belonging to the odd cycle, and by $k_1 \v k_2 \v ... \v k_{2r+1} \v k_1$, the elements of the cycle. 
See an example in Figure \ref{Fig:erasingpath}. 

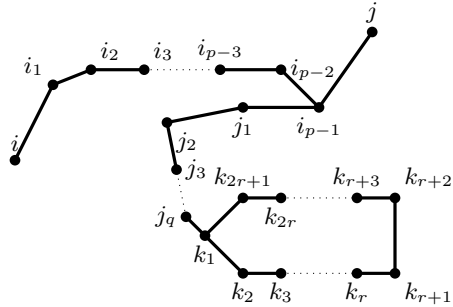
\begin{figure}[h!]
\begin{center}
\begin{tikzpicture}
\draw[-,very thick] (0.5,0) -- (1,1);
\draw[-,very thick] (1,1) -- (1.5,1.2);
\draw[-,very thick] (1.5,1.2) -- (2.2,1.2);
\draw[-,dotted] (2.2,1.2) -- (3.2,1.2);
\draw[-,very thick] (3.2,1.2) -- (4,1.2);
\draw[-,very thick] (4,1.2) -- (4.5,0.7);
\draw[-,very thick] (4.5,0.7) -- (5.2,1.7);
\draw[-,very thick] (4.5,0.7) -- (3.5,0.7);
\draw[-,very thick] (3.5,0.7) -- (2.5,0.5);
\draw[-,very thick] (2.5,0.5) -- (2.625,-0.125);
\draw[-,dotted] (2.5,0.5) -- (2.75,-0.75);
\draw[-,very thick] (2.75,-0.75) -- (3,-1);
\draw[-,very thick] (3,-1) -- (3.5,-0.5);
\draw[-,very thick] (3,-1) -- (3.5,-1.5);
\draw[-,very thick] (3.5,-1.5) -- (4,-1.5);
\draw[-,dotted] (4,-1.5) -- (5.5,-1.5);
\draw[-,dotted] (5,-1.5) -- (5.5,-1.5);
\draw[-,very thick] (5.5,-1.5) -- (5.5,-0.5);
\draw[-,very thick] (3.5,-0.5) -- (4,-0.5);
\draw[-,dotted] (4,-0.5) -- (5.5,-0.5);
\draw[-,dotted] (5,-1.5) -- (5.5,-1.5);
\draw[-,very thick] (5,-0.5) -- (5.5,-0.5);
\draw[-,very thick] (5,-1.5) -- (5.5,-1.5);
\fill  (0.5,0) circle (2pt) node[above] {\small{$i$}} ;
\fill (1,1) circle (2pt) node[above left] {\small{$i_1$}} ;
\fill (1.5,1.2) circle (2pt) node[above right] {\small{$i_2$}} ;
\fill (2.2,1.2) circle (2pt) node[above right] {\small{$i_3$}} ;
\fill (3.2,1.2) circle (2pt) node[above] {\small{$i_{p-3}$}} ;
\fill (4,1.2) circle (2pt) node[right] {\small{$i_{p-2}$}} ;
\fill (4.5,0.7) circle (2pt) node[below] {\small{$i_{p-1}$}} ;
\fill (5.2,1.7) circle (2pt) node[above] {\small{$j$}} ;
\fill (3.5,0.7) circle (2pt) node[below] {\small{$j_{1}$}} ;
\fill (2.5,0.5) circle (2pt) node[below right] {\small{$j_{2}$}} ;
\fill (2.625,-0.125) circle (2pt) node[right] {\small{$j_{3}$}} ;
\fill (2.75,-0.75) circle (2pt) node[left] {\small{$j_{q}$}} ;
\fill (3,-1) circle (2pt) node[below] {\small{$k_{1}$}} ;
\fill (3.5,-0.5) circle (2pt) node[above] {\small{$k_{2r+1}$}} ;
\fill (3.5,-1.5) circle (2pt) node[below] {\small{$k_{2}$}} ;
\fill (4,-0.5) circle (2pt) node[below] {\small{$k_{2r}$}} ;
\fill (5,-0.5) circle (2pt) node[above] {\small{$k_{r+3}$}} ;
\fill (4,-1.5) circle (2pt) node[below] {\small{$k_{3}$}} ;
\fill (5.5,-0.5) circle (2pt) node[above right] {\small{$k_{r+2}$}} ;
\fill (5.5,-1.5) circle (2pt) node[below right] {\small{$k_{r+1}$}} ;
\fill (5,-1.5) circle (2pt) node[below] {\small{$k_{r}$}} ;
\end{tikzpicture}
\caption[smallcaption]{The path from $i$ to $j$ and then to an odd cycle}
\label{Fig:erasingpath}
\end{center}
\end{figure}

Then set 
\[y^2 = j_1 j_1 j_2 j_2 ... j_q j_q k_1 k_1 k_2 k_3 ... k_{2r} k_{2r+1}.\]
We are in the following alternative: 
\begin{itemize}
\item if $q$ is even, then in $M_\phi(y^1y^2)$ the two nodes $i_{p-1}$ are matched with the two nodes $j_1$, the two $j_2$ with the two $j_3$, and so on, until the two $j_{q}$ are matched with the two $k_1$, and then, as the 
cycle is induced, $k_2$ is matched with $k_3$, $k_4$ with $k_5$ and so on, until $k_{2p}$ is matched with $k_{2p+1}$. 
On the other hand, in $M_\phi(y^2)$, the two $j_1$ are matched with the two $j_2$, the two $j_3$ with the two $j_4$, and so on, until the two $j_{q-1}$ are matched with the two $j_q$. 
Then, a $k_1$ is matched with $k_2$, $k_3$ with $k_4$ and so on, until $k_{2p-1}$ is matched with $k_{2p}$ and $k_{2p+1}$ is matched with the remaining $k_1$. 
\item if $q$ is odd, then the edges of $M_\phi(y^1y^2)$ are as in the first case, until the two nodes $j_{q-1}$ are matched with the two nodes $j_q$. But then, whatever $\phi$ is, 
one of the two nodes $k_1$ is matched with $k_2$, $k_3$ with $k_4$, and so on, until $k_{2p-1}$ is matched with $k_{2p}$, and $k_{2p+1}$ is matched with the remaining $k_1$. 
Also, in $M_\phi(y^2)$, the two $j_1$ are matched with the two $j_2$, the two $j_3$ with the two $j_4$, and so on, until the two $j_{q-2}$ are matched with the two $j_{q-1}$.  
Then, the two $j_q$ are matched with the two $k_1$, $k_2$ is matched with $k_3$, and so on, until $k_{2p}$ is matched with $k_{2p+1}$.
\end{itemize} 
In both cases, we obtain that both $W_\phi(y^1y^2)=\emptyset$ and that $W_\phi(y^2)=\emptyset$. In particular, as $W_\phi(ijy^1)=\emptyset$ we have $W_\phi(ijy^1y^2)=\emptyset$. 
Therefore $y=y^1y^2$ is an erasing word for $ij$. See an example in Figure \ref{Fig:erasingword}. 

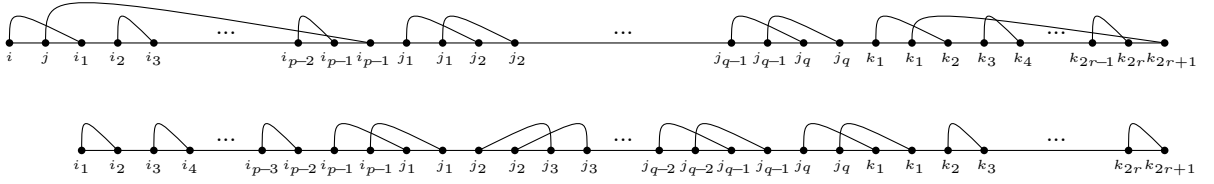
\begin{figure}[h!]
\begin{center}
\begin{tikzpicture}[scale=0.95]
\draw[-] (-0.5,0) -- (15.5,0);
\fill (-0.5,0) circle (1.5pt) node[below] {\tiny{$i$}};
\fill (0,0) circle (1.5pt) node[below] {\tiny{$j$}};
\fill (0.5,0) circle (1.5pt) node[below] {\tiny{$i_1$}};
\draw[-, thin] (-0.5,0) .. controls +(up:0.5cm)  .. (0.5,0);
\fill (1,0) circle (1.5pt) node[below] {\tiny{$i_2$}};
\fill (1.5,0) circle (1.5pt) node[below] {\tiny{$i_3$}};
\draw[-, thin] (1,0) .. controls +(up:0.5cm)  .. (1.5,0);
\fill (2.5,0) node[above] {\small{...}};
\fill (3.5,0) circle (1.5pt) node[below] {\tiny{$i_{p\!-\!2}$}};
\fill (4,0) circle (1.5pt) node[below] {\tiny{\,\,$i_{p\!-\!1}$}};
\draw[-, thin] (3.5,0) .. controls +(up:0.5cm)  .. (4,0);
\fill (4.5,0) circle (1.5pt) node[below] {\tiny{\,\,$i_{p\!-\!1}$}};
\draw[-, thin] (0,0) .. controls +(up:0.75cm)  .. (4.5,0);
\fill (5,0) circle (1.5pt) node[below] {\tiny{$j_1$}};
\fill (5.5,0) circle (1.5pt) node[below] {\tiny{\,\,$j_1$}};
\fill (6,0) circle (1.5pt) node[below] {\tiny{$j_2$}};
\fill (6.5,0) circle (1.5pt) node[below] {\tiny{\,\,$j_2$}};
\draw[-, thin] (5,0) .. controls +(up:0.5cm)  .. (6,0);
\draw[-, thin] (5.5,0) .. controls +(up:0.5cm)  .. (6.5,0);
\fill (8,0) node[above] {\small{...}};
\fill (9.5,0) circle (1.5pt) node[below] {\tiny{$j_{q\!-\!1}$}};
\fill (10,0) circle (1.5pt) node[below] {\tiny{\,\,$j_{q\!-\!1}$}};
\fill (10.5,0) circle (1.5pt) node[below] {\tiny{$j_q$}};
\fill (11,0) circle (1.5pt) node[below] {\tiny{\,\,$j_q$}};
\draw[-, thin] (9.5,0) .. controls +(up:0.5cm)  .. (10.5,0);
\draw[-, thin] (10,0) .. controls +(up:0.5cm)  .. (11,0);
\fill (11.5,0) circle (1.5pt) node[below] {\tiny{$k_1$}};
\fill (12,0) circle (1.5pt) node[below] {\tiny{\,\,$k_1$}};
\fill (12.5,0) circle (1.5pt) node[below] {\tiny{\,\,$k_2$}};
\draw[-, thin] (11.5,0) .. controls +(up:0.5cm)  .. (12.5,0);
\fill (13,0) circle (1.5pt) node[below] {\tiny{\,\,$k_3$}};
\fill (13.5,0) circle (1.5pt) node[below] {\tiny{\,\,$k_4$}};
\draw[-, thin] (13,0) .. controls +(up:0.5cm)  .. (13.5,0);
\fill (14,0) node[above] {\small{...}};
\fill (14.5,0) circle (1.5pt) node[below] {\tiny{$k_{2r\!-\!1}$}};
\fill (15,0) circle (1.5pt) node[below] {\tiny{\,\,$k_{2r}$}};
\draw[-, thin] (14.5,0) .. controls +(up:0.5cm)  .. (15,0);
\fill (15.5,0) circle (1.5pt) node[below] {\tiny{\,\,\,\,\,$k_{2r+1}$}};
\draw[-, thin] (12,0) .. controls +(up:0.5cm)  .. (15.5,0);
\draw[-] (0.5,-1.5) -- (15.5,-1.5);
\fill (0.5,-1.5) circle (1.5pt) node[below] {\tiny{$i_1$}};
\fill (1,-1.5) circle (1.5pt) node[below] {\tiny{$i_2$}};
\draw[-, thin] (0.5,-1.5) .. controls +(up:0.5cm)  .. (1,-1.5);
\fill (1.5,-1.5) circle (1.5pt) node[below] {\tiny{$i_3$}};
\fill (2,-1.5) circle (1.5pt) node[below] {\tiny{$i_4$}};
\draw[-, thin] (1.5,-1.5) .. controls +(up:0.5cm)  .. (2,-1.5);
\fill (2.5,-1.5) node[above] {\small{...}};
\fill (3,-1.5) circle (1.5pt) node[below]{\tiny{$i_{p\!-\!3}$}};
\fill (3.5,-1.5) circle (1.5pt) node[below] {\,\tiny{$i_{p\!-\!2}$}};
\draw[-, thin] (3,-1.5) .. controls +(up:0.5cm)  .. (3.5,-1.5);
\fill (4,-1.5) circle (1.5pt) node[below] {\tiny{\,\,$i_{p\!-\!1}$}};
\fill (4.5,-1.5) circle (1.5pt) node[below] {\tiny{\,\,\,\,$i_{p\!-\!1}$}};
\fill (5,-1.5) circle (1.5pt) node[below] {\tiny{\,$j_1$}};
\fill (5.5,-1.5) circle (1.5pt) node[below] {\tiny{\,\,$j_1$}};
\draw[-, thin] (4,-1.5) .. controls +(up:0.5cm)  .. (5,-1.5);
\draw[-, thin] (4.5,-1.5) .. controls +(up:0.5cm)  .. (5.5,-1.5);
\fill (6,-1.5) circle (1.5pt) node[below] {\tiny{$j_2$}};
\fill (6.5,-1.5) circle (1.5pt) node[below] {\tiny{\,\,$j_2$}};
\fill (7,-1.5) circle (1.5pt) node[below] {\tiny{$j_3$}};
\fill (7.5,-1.5) circle (1.5pt) node[below] {\tiny{\,\,$j_3$}};
\draw[-, thin] (7,-1.5) .. controls +(up:0.5cm)  .. (6,-1.5);
\draw[-, thin] (7.5,-1.5) .. controls +(up:0.5cm)  .. (6.5,-1.5);
\fill (8,-1.5) node[above] {\small{...}};
\fill (8.5,-1.5) circle (1.5pt) node[below]{\tiny{$j_{q\!-\!2}$}};
\fill (9,-1.5) circle (1.5pt) node[below] {\,\tiny{$j_{q\!-\!2}$}};
\fill (9.5,-1.5) circle (1.5pt) node[below] {\tiny{\,\,$j_{q\!-\!1}$}};
\fill (10,-1.5) circle (1.5pt) node[below] {\tiny{\,\,\,\,$j_{q\!-\!1}$}};
\draw[-, thin] (8.5,-1.5) .. controls +(up:0.5cm)  .. (9.5,-1.5);
\draw[-, thin] (9,-1.5) .. controls +(up:0.5cm)  .. (10,-1.5);
\fill (10.5,-1.5) circle (1.5pt) node[below] {\tiny{$j_q$}};
\fill (11,-1.5) circle (1.5pt) node[below] {\tiny{\,\,$j_q$}};
\fill (11.5,-1.5) circle (1.5pt) node[below] {\tiny{$k_1$}};
\fill (12,-1.5) circle (1.5pt) node[below] {\tiny{\,\,$k_1$}};
\draw[-, thin] (10.5,-1.5) .. controls +(up:0.5cm)  .. (11.5,-1.5);
\draw[-, thin] (11,-1.5) .. controls +(up:0.5cm)  .. (12,-1.5);
\fill (12.5,-1.5) circle (1.5pt) node[below] {\tiny{\,\,$k_2$}};
\fill (13,-1.5) circle (1.5pt) node[below] {\tiny{\,\,$k_3$}};
\draw[-, thin] (12.5,-1.5) .. controls +(up:0.5cm)  .. (13,-1.5);
\fill (14,-1.5) node[above] {\small{...}};
\fill (15,-1.5) circle (1.5pt) node[below] {\tiny{$k_{2r}$}};
\fill (15.5,-1.5) circle (1.5pt) node[below] {\tiny{\,\,\,\,$k_{2r+1}$}};
\draw[-, thin] (15,-1.5) .. controls +(up:0.5cm)  .. (15.5,-1.5);
\end{tikzpicture}
\caption[smallcaption]{The two perfect matchings $M_{\textsc{fcfm}}(ijy^1y^2)$ and $M_{\textsc{fcfm}}(y^1y^2)$, for an even $p$ and an odd $q$.}
\label{Fig:erasingword}
\end{center}
\end{figure}

\medskip
We now consider any word $u \in \mathbb W_2$, say $u=u_1u_2...u_{2r_1}$. First, as we just proved, there exists an erasing word, say $z^{1}$, for the two-letter word $u_{2r_1-1}u_{2r_1}$. 
In particular, we have that $W_\phi\left(u_{2r_1-1}u_{2r_1}z^1\right)=\emptyset.$ Thus, the sub-additivity of $\phi$ entails that 
\[\left|W_\phi\left(uz^1\right)\right| \le \left|W_\phi\left(u_1u_2...u_{2r_1-2}\right)\right|+\left|W_\phi\left(u_{2r_1-1}u_{2r_1}z^1\right)\right| = \left|W_\phi\left(u\right)\right| - 2,\]
in other words the input of $z^1$ strictly decreases the size of the buffer content $u$, that is, if we let $u^{2}=W_\phi\left(uz^1\right)$, then $u^2$ is of even length $2r_2$, where $r_2 <r_1$. 
We then apply the same argument as above for $u^{2}$ instead of $u$: there exists an erasing word $z^2$ for the two-letter word $u^{2}_{2r_2-1}u^{2}_{2r_2}$ gathering the last two letters of $u^{2}$, so 
as above, 
\[\left|W_\phi\left(uz^1z^2\right)\right| = \left|W_\phi\left(u^{2}z^2\right)\right| \le \left|W_\phi\left(u^{2}\right)\right| - 2 .\] 
We can continue this construction by induction, until we reach an index $\ell$ such that 
\begin{equation}
\label{eq:OK}
W_\phi\left(uz^1z^2...z^\ell\right) = \emptyset.
\end{equation}
Observe that, as $z^1,...z^\ell$ are all erasing words, we have that $W_\phi(z^1)=W_\phi(z^2)=...=W_\phi(z^\ell)=\emptyset.$ 
Thus $W_\phi(z^1z^2...z^\ell)=\emptyset$, which shows, together with (\ref{eq:OK}), that $z=z^1z^2...z^\ell$ is an erasing word for $u$.

\section{Proof of assertion (ii) of Proposition \ref{pro:erasing2}}
\label{sec:appendixB}

We first need the following lemma, 
\begin{lemma}
\label{lemma:spanning}
Any connected non-bipartite graph $G=(\maV,\maE)$ can be spanned by an odd cycle, i.e. there exists 
an odd cycle $\maC=c_1\v c_2 \v \cdots \v c_{2q} \v c_{2q+1}\v c_1$, in which all the nodes of $\maV$ appear at least once.  
\end{lemma}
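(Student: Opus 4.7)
The plan is to construct the desired odd closed walk by gluing together two auxiliary closed walks based at a common vertex: one that is guaranteed to visit every node (but has even length), and one that is guaranteed to have odd length (but need not cover everything). Concatenation then yields an odd closed walk through all vertices, which is precisely what the statement calls an odd cycle spanning $\maV$.

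First I would fix any vertex $v_0 \in \maV$ and let $T$ be a spanning tree of $G$ rooted at $v_0$; this exists because $G$ is connected. A depth-first traversal of $T$ starting and ending at $v_0$ produces a closed walk $\maW_1$ based at $v_0$ that visits every vertex of $\maV$ and traverses each edge of $T$ exactly twice, so $\maW_1$ has length $2(|\maV|-1)$, which is even. Crucially, consecutive vertices of $\maW_1$ are adjacent in $T$ and hence in $G$.

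Next, I would use the non-bipartiteness of $G$ to produce an odd closed walk $\maW_2$ based at the same vertex $v_0$. Since $G$ is non-bipartite, it contains at least one odd cycle $C = w_1 \v w_2 \v \cdots \v w_{2s+1}\v w_1$. Pick any vertex $w$ on $C$ and a path $P = v_0 \v u_1 \v \cdots \v u_\ell \v w$ from $v_0$ to $w$ (e.g. the unique path in $T$). Define $\maW_2$ by following $P$ from $v_0$ to $w$, then traversing all of $C$ returning to $w$, and then following $P$ backwards from $w$ to $v_0$. The length of $\maW_2$ is $2(\ell+1) + (2s+1)$, which is odd, and again every pair of consecutive vertices is adjacent in $G$.

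Finally, concatenating $\maW_1$ with $\maW_2$ at $v_0$ produces a closed walk $\maC$ based at $v_0$ whose length is even $+$ odd $=$ odd, whose consecutive vertices are all adjacent in $G$, and which visits every node of $\maV$ (inherited from $\maW_1$). Relabelling the vertices of $\maC$ as $c_1, c_2, \ldots, c_{2q+1}$ with $c_1 = v_0$ gives the odd spanning cycle in the sense of the statement. There is no real obstacle here; the only point to be careful about is that the notion of ``cycle'' in the statement permits repeated vertices (it is a closed walk rather than a simple cycle), which is exactly why the tree-traversal/odd-cycle gluing argument works without further combinatorial effort.
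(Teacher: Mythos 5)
Your proof is correct and rests on the same key idea as the paper's: since the ``cycle'' in the statement is really a closed walk, one glues an odd cycle (for parity) to doubled paths/walks (which contribute even length) at a common base vertex to cover all of $\maV$. The only cosmetic difference is the covering device --- you use a spanning-tree Euler tour plus one doubled path to reach the odd cycle, whereas the paper starts from a vertex of the odd cycle and appends a doubled shortest path to each uncovered vertex in turn; both yield the same parity count and the same conclusion.
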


\begin{proof}
As $G$ is non-bipartite, $G$ contains an elementary odd cycle $\breve{\maC}:=k_1 \v k_2 \v \, \cdots \, \v k_{2p} \v k_{2p+1}\v k_1.$ 
Let $s \in \N$ be the number of nodes of $\maV$ which do not appear in the latter cycle, and denote by 
$i_1,...,i_s$, these nodes. By connectedness, there exists for any $j \in \llbracket 1,s \rrbracket$, a minimal path $\maP_j$ of length, say, $\ell_j$, 
from $k_1$ to $i_j$. Then, we can connect $k_1$ to itself by following, first, the cycle $\maC$, and then all the paths $\maP_j$ from $k_1$ to $i_j$ and then the reversed path of $\maP_j$ from $i_j$ to $k_1$, 
successively for all 
$j \in \llbracket 1,s \rrbracket$. The resulting path is a cycle connecting to $k_1$ to itself and spanning the whole set $\maV$, and its length is 
$$2p+1 + \sum_{j=1}^s 2\ell_j=2\left(p+\sum_{j=1}^s \ell_j\right)+1=:2q+1,$$ 
an odd number. 
\end{proof}

We can now prove assertion (ii) of Proposition \ref{pro:erasing2} for $G$ any non-bipartite graph, 
and $\phi=\textsc{lcfm}.$ First, from Lemma \ref{lemma:spanning}, there exists a cycle $\maC=c_1\v c_2 \v ... \v c_{2q+1}$ that spans 
the graph $G$. 
We then let $z^1$ be the word consisting of all the nodes of $\maC$ visited 4 times in that order, i.e. 
\[z^1=c_1c_2...c_{2q+1}c_1...c_{2q+1}c_1...c_{2q+1}c_1...c_{2q+1}.\]
We drop again the lists of permutations from all notation. 
Clearly, we have that 
\begin{equation}
\label{eq:voisC}
\maE\left(\left\{c_1,c_2,...,c_{2q+1}\right\}\right)=\maV,
\end{equation} 
and, as $\maC$ is a cycle, $W_{\textsc{lcfm}}(z)=\emptyset$, as for any admissible policy. Second, as $\maC$ is a cycle it is also clear that any prefix of $z^1$ of even size is completely matchable by any admissible policy. Thus, for any $w$
Now fix $i$ and $j$ in $\maV$ such that $i \pv j$. To show that $z^1$ is a $2$-strong erasing word, we need to check that $W_{\textsc{lcfm}}(ijz^1)=\emptyset$. 
For this let us define the following sets for $k \in \{i,j\}$, 
\begin{align*}
\mathcal H(k) &= \left\{\mbox{even indexes $2\ell$ in }\llbracket 1, 2q+1 \rrbracket\,:\,c_{2\ell} \v k \right\};\\
\mathcal O(k) &= \left\{\mbox{odd indexes $2\ell+1$ in }\llbracket 1, 2q+1 \rrbracket\,:\,c_{2\ell+1} \v k \right\}.
\end{align*}
We are in the following alternative: 
\begin{enumerate}
\item[Case 1:] $\mathcal O(i) \cup \mathcal O(j) \ne\emptyset$, i.e. $i$ or $j$ (or both) neighbor a node of odd index in $\maC$. Let $2p+1=\min \mathcal O(i) \cup \mathcal O(j)$. 
First observe that, by the definition of {\sc lcfm} all items of even indexes in $\llbracket 1,2p \rrbracket$ are matched with the immediate preceding item of odd index, so the entering 
$c_{2p+1}$ item finds only $i$ and $j$ in the system, and is matched with $j$ if $c_{2p+1} \v j$, or with $i$ if $j \pv c_{2p+1}$ and $i \v c_{2p+1}$. Let us assume that we are in the first case, the second one can be treated analogously. So we have $W_{\textsc{lcfm}}\left(ijc_1...c_{2p+1}\right)=i$. Let us now define  
\begin{equation*}
\tilde{\mathcal H}(i) = \left\{\mbox{even indexes $2\ell$ in }\llbracket 2p+2, 2q \rrbracket\,:\,c_{2\ell} \v i \right\}.
\end{equation*}
We have three sub-cases: 
     \begin{itemize}
     \item[Sub-case 1a:] $\tilde{\mathcal H}(i) \ne \emptyset$.\quad Set $2r=\min \tilde{\mathcal H}(i)$. Then the $i$ item is matched with $c_{2r}$. Indeed, in {\sc lcfm} all items of odd indexes in 
     $\llbracket 2p+2, 2r \rrbracket$ are matched with the immediate preceding item, even if they are compatible with $i$. 
     Then, after the $i$ item is matched with the $c_{2r}$ item, all items of odd indexes in $\llbracket 2r+1,2q-1 \rrbracket$ (if the latter is non-empty) in the first exploration of $\maC$ are matched 
     with the immediate following item, until the first $c_{2q+1}$ item is matched with the second $c_1$ item. After that, in the second exploration of $\maC$ all items of even nodes are matched with 
     the following item of odd index, until the second $c_{2q}$ item is matched with the second $2q+1$ item, so we get a perfect matching of $ij$ with the first two explorations of $\maC$. 
     Then the last two visits of $\maC$ are perfectly matched on the fly, since $\maC$ is a cycle. So $W_{\textsc{lcfm}}(ijz^1)=\emptyset.$ 
     \item[Sub-case 1b:] $\tilde{\mathcal H}(i)=\emptyset$ and $\mathcal O(i) \ne \emptyset$.\quad  
     Due to the {\sc lcfm} policy, in the first exploration of $\maC$ all odd items are matched with the immediate preceding 
     item of even index, until $c_{2q+1}$, in a way that $W_{\textsc{lcfm}}(ijc_1...c_{2q+1})=i$. Let $2s+1=\min \mathcal O(i)$. Then the remaining $i$ item is matched with the second $c_{2s+1}$, since  
     in {\sc lcfm}, all items of even indexes less than $2s+1$ that are compatible with $i$, are matched with the preceding item of odd index. After that, if $s<q$ then all remaining items of 
     even indexes in the second exploration of $\maC$ are matched with the immediate following item, until the second $c_{2q}$ item is matched with the second $c_{2q+1}.$ 
     Thus $W_\phi(ijz^1)=\emptyset$, and we conclude as in 1a. 
     \item[Sub-case 1c:] $\tilde{\mathcal H}(i)=\emptyset$ and $\mathcal O(i) = \emptyset$.\quad From (\ref{eq:voisC}) there necessarily exists an even index (take the smallest one) 
     $2u \in \llbracket 2,2p \rrbracket$ such that $i \v c_{2u}$. Then, as in 1b we have $W_{\textsc{lcfm}}\left(ijc_1...c_{2q+1}\right)=i$. Then, in the second exploration of $\maC$, in {\sc lcfm} all items of 
     even indexes are matched with the preceding item of odd index, until the second $c_{2q+1}$ remains unmatched, i.e. $W_{\textsc{lcfm}}\left(ijc_1...c_{2q+1}c_1...c_{2q+1}\right)=ic_{2q+1}$. 
     Then the remaining $c_{2q+1}$ item is matched with the third $c_1$, and in the third visit of $\maC$, all items of even indexes are matched with the following item of odd index, until 
     $c_{2u}$ is matched with $i$. To finish the third exploration, if $u<q$ then all items of odd index in $\llbracket 2u+1,2q-1 \rrbracket$ are matched with the following item of even index, until 
     the third $c_{2q+1}$ remains alone unmatched, i.e. $W_{\textsc{lcfm}}\left(ijc_1...c_{2q+1}c_1...c_{2q+1}c_1...c_{2q+1}\right)=c_{2q+1}$. At this point, the forth $c_1$ is matched with 
     the third $c_{2q+1}$, and then in the fourth exploration of $\maC$ all items of even index are matched with the following item of odd index, until the last $c_{2q}$ is matched with the last 
     $c_{2q+1}$ item. We end up again with $W_{\textsc{lcfm}}(ijz^1)=\emptyset.$ 
     \end{itemize}
\item[Case 2:] $\mathcal O(i) \cup \mathcal O(j) =\emptyset$.\quad In that case $i$ and $j$ both have only neighbors of even indexes in $\maC$, in particular from (\ref{eq:voisC}) $\maH(i)$ and $\maH(j)$ are both non-empty. Let $2p=\min \mathcal H(i)$ and $2p'=\min \mathcal H(j)$. Again from the definition of {\sc lcfm}, in the first exploration of $\maC$, all items of even indexes are matched with the preceding item of odd index, until $c_{2q+1}$ remains unmatched, so $W_{\textsc{lcfm}}\left(ijc_1...c_{2q+1}\right)=ijc_{2q+1}$. Then the first $c_{2q+1}$ item is matched with the second $c_1$, and if $2< \min(2p,2p')$, in the second exploration of $\maC$ all items of even index in $\llbracket 2,\min(2p,2p')-2  \rrbracket$ are matched with the following item of odd index. We have again, two sub-cases: 
    \begin{itemize}
    \item[Sub-case 2a:] $p'\le p$, so in {\sc lcfm} the $j$-item is matched with the second $c_{2p}$. 
    In the second exploration of $\maC$, after the $c_{2p}$ item has been matched with the $j$ item, if $p<q$ all items of odd indexes in 
    $\llbracket 2p+1,2q-1 \rrbracket$ are matched on the fly with the immediate following item of even index, until only the second $c_{2q+1}$ item remains unmatched, 
    so $W_{\textsc{lcfm}}\left(ijc_1...c_{2q+1}c_1...c_{2q+1}\right)=ic_{2q+1}$. Then the second $c_{2q+1}$ is matched with the third $c_1$. In the third exploration, if $p>2$, all items of even indexes 
    in $\llbracket 2,2p-2 \rrbracket$ are matched with the following item, until the $c_{2p}$ item is matched with $i$. 
    We then conclude as in 1c, and end up again with $W_{\textsc{lcfm}}(ijz^1)=\emptyset.$ 
    \item[Sub-case 2b:] $p < p'$, so the $i$-item is matched with the second $c_{2p}$. Then the $j$ item remains to be matched, and we conclude exactly as in 2a, by matching the $j$ item with the 
    third $c_{2p'}$ (instead of $i$ with the third $c_{2p}$). This concludes the proof. 
    \end{itemize}   
 \end{enumerate}

\section{Proof of (iii) of Proposition \ref{pro:erasing2}}
\label{sec:appendixC}
We now turn to the proof of (iii) of Proposition \ref{pro:erasing2}. Take $G$, the odd cycle $c_1\v c_2 \v ... \v c_{2q+1}$ and $\phi=\textsc{fcfm}$. We set 
$$z=\vec z \,\vec z \,\cv z \,\cv z:=c_1c_2\cdots c_{2q}c_{2q+1}c_1c_2\cdots c_{2q}c_{2q+1}c_1c_{2q+1}c_{2q}\cdots c_2 c_1 c_{2q+1} c_{2q} \cdots c_2,$$ namely, twice the word $\vec z$ exploring the cycle in increasing order, and then twice the word $\cv z$ exploring the cycle in decreasing order. 

Let us first take the case of a two-letter word $c_ic_j$, where $i,j$ are two nodes of $G$ such that $i<j$ and $i\pv j$. Then we have the following sub-cases:
\begin{itemize}
\item[Case 1]: $i$ is even, $j$ is odd. In that case, in $\Mfcfs(ijz)$, the first incoming $c_1$-item is matched with the first incoming $c_2$-item, the first $c_3$ with the first $c_4$, and so on, until the first $c_{i-1}$-item is matched with the stored $c_i$-item. Then, the first incoming $c_i$-item is matched with the first incoming $c_{i+1}$-item, the first $c_{i+2}$ with the first $c_{i+3}$, and so on, until the first incoming $c_{j-1}$-item (remember that $j-1$ is even) is matched with the stored $c_j$-item. Then all subsequent letters (from the first incoming $c_j$ item to the end of the second word $\cv z$), are immediately matched 2 by 2 in order of arrivals. As they is an even number of such letters, all letters have been matched and we get 
$\Mfcfs(c_ic_jz)=\emptyset$.
\item[Case 2]: $i$ and $j$ are both even. Then, as in Case 1, in $\Mfcfs(c_ic_jz)$ the first incoming $c_1$ is matched with the first incoming $c_2$, the first $c_3$ with the first $c_4$, and so on, until the first $c_{i-1}$-item is matched with the stored $c_i$-item. Then (if $j\ge i+4$), the first incoming $c_i$-item is matched with the first incoming $c_{i+1}$-item, the first $c_{i+2}$ with the first $c_{i+3}$, and so on, until the first incoming $c_{j-4}$ is matched with the first incoming $c_{j-3}$. Then, in {\sc fcfm}, the first incoming $c_{j-1}$ is matched with the stored $c_j$-item, so the first incoming $c_{j-2}$ (which is precisely the first incoming  $c_i$-item if $j=i+2$) remains unmatched. Then, the first incoming $c_j$ gets matched with the first incoming $c_{j+1}$, and so on, until the end of the first $\vec z$, where $c_{2q}$ is matched with $c_{2q+1}$. We get that $\Wfcfs(c_ic_j\vec z)=c_{j-2}$. 
Then, in the second $\vec z$, all letters of even indexes of the second $\vec z$ are matched with the immediately preceding item of odd index, until the second $c_{j-4}$ is matched with the second $c_{j-5}$. 
Then, the second $c_{j-3}$ is matched with the stored $c_{j-2}$ of the first $\vec z$. 
Then, as $j-2$ is even, we conclude the second $\vec z$ by matching items of  odd indexes with the immediately preceding item of even index, until the second $c_{2q+1}$ is matched with the second $c_{2q}$. We thus get $\Wfcfs(c_ic_j\vec z\,\vec z)=\emptyset$ and thus, clearly, $$\Wfcfs(c_ic_jz)=\Wfcfs(\cv z\,\cv z)=\emptyset.$$
\item[Case 3]: $i$ is odd and $j$ is even. Then, 
\begin{itemize}
\item[Sub-case 3a]: $i\ge 3$. In that case, in the first $\vec z$ all letters of even index are matched with the preceding item of odd index, until $c_{i-3}$ is matched with $c_{i-4}$ (if $i\ge 5$). Then, $c_{i-1}$ is matched with the stored $c_{i}$-item, and then the first incoming $c_i$ is stored with the successive $c_{i+1}$, and then (if $j\ge i+4$), all elements of $\vec z$ of even indexes are matched with the immediately preceding element of odd index, until $c_{j-2}$ is matched with $c_{j-3}$. 
Then, $c_{j-1}$ is matched with the stored $c_j$-item, and then, until the end of the first $\vec z$, items of odd indexes are matched with the immediately preceding element of even index. So $$\Wfcfs(c_ic_j\vec z)=c_{i-2}.$$ 
Then, 
	\begin{itemize}
	\item If $i\ge 7$, in the second $\vec z$, elements of even indexes are matched with the immediately preceding 	
	element of odd index, until $c_{i-5}$ is matched with $c_{i-6}$. Then the second $c_{i-3}$ is matched with 
	the stored $c_{i-2}$, whereas $c_{i-4}$ remains at first unmatched. Then, all elements 
	of even index are matched with the immediately preceding element of odd index, until $c_{2q}$ is matched 
	with $c_{2q-1}$. So we get 
		\begin{equation}
		\label{eq:CIP0}
		\Wfcfs(c_ic_j\vec z\,\vec z)=c_{i-2}c_{2q+1}.
		\end{equation}
	We then show the following result: 
	\begin{equation}
	\label{eq:CIP1}
	\mbox{For any odd }k\ge 3,\quad  \Wfcfs(c_kc_{2q+1}\cv z\,\cv z)=\emptyset.
	\end{equation}
	Indeed, in the first $\cv z$, $c_1$ is matched with the stored $c_{2q+1}$, and then any element of even index $\ell$ 
	is matched with the immediately preceding element of odd index $\ell+1$, until $c_{k+3}$ is matched with 
	$c_{k+4}$ (if $k\le 2q-3$). Then, the first $c_{k+1}$ is matched with the stored $c_k$, and $c_{k+2}$ remains at 
	first unmatched. Then, in the first $\cv z$ all elements of even index $\ell$ are matched with the element of odd index 
	$\ell+1$, until $c_1$ remains unmatched. So we get  $\Wfcfs(c_kc_{2q+1}\cv z)=c_{k+2}c_1$. Last, in the second 
	$\cv z$, $c_{2q+1}$ is matched with the stored $c_1$, and then each element of odd index $\ell$ is matched with the 
	immediately preceding element of even index $\ell+1$, until (if $k\le 2q-5$) 
	the second $c_{k+4}$ gets matched with the second $c_{k+5}$. Then, the second $c_{k+3}$ gets matched with the 
	stored $c_{k+2}$, and then, in the remainder of the second $\cv z$ all elements of even index $\ell$ 
	are matched with the immediately preceding element of odd index $\ell+1$, until $c_2$ is matched with $c_3$. 
	We finally obtain that $\Wfcfs(c_kc_{2q+1}\cv z\,\cv z)=\emptyset$, which completes the proof of \eqref{eq:CIP1}. 
	
	Last, from \eqref{eq:CIP0} and \eqref{eq:CIP1}, we obtain that 
	\[\Wfcfs(c_ic_j\vec z\,\vec z\,\cv z\,\cv z)=\Wfcfs(c_{i-2}c_{2q+1}\,\cv z\,\cv z)=\emptyset.\]
	\item If $i=5$, the second $c_{2}$ is matched with the stored $c_{3}$. Then, in the second $\vec z$, $c_{1}$ 
	remains at first unmatched, and then all elements of even index are matched with the immediately preceding 
	element of odd index, until $c_{2q}$ is matched with $c_{2q-1}$. 
	Then the second $c_{2q+1}$ gets matched with the stored $c_1$, and so $\Wfcfs(c_ic_j\vec z\,\vec z)=\emptyset$, 
	implying in turn that 
	\[\Wfcfs(c_ic_j\vec z\,\vec z\,\cv z\,\cv z)=\Wfcfs(\cv z\,\cv z)=\emptyset.\]
	\item If $i=3$, then, for the first $c_2$ is matched with the stored $c_2$, and then in $\vec z$, any element of even 
	index until $c_{j-2}$ is matched with the immediate preceding item, until $c_{j-1}$ is matched with the stored 
	$c_j$. 
	Then, in the first $\vec z$, any item of odd index is matched with the immediate preceding item of even index, until 
	$c_{2q+1}$ is matched with $c_{2q}$. So $\Wfcfs(c_3c_j\,\vec z)=c_1$. Then, in the second $\vec z$, $c_2$ is 
	matched with the stored $c_1$, the second incoming $c_1$ remains at first unmatched, and then in the second 
	$\vec z$, any item of event index is matched with the immediate preceding item, until $c_{2q}$ is matched with 
	$c_{2q-1}$. Finally, the last $c_{2q+1}$ is matched with the stored $c_1$, so 
	$\Wfcfs(c_3c_j\,\vec z\,\vec z)=\emptyset$, and thus 
	\[\Wfcfs(c_3c_j\,z)=\Wfcfs(\cv z\,\cv z)=\emptyset.\]
	\end{itemize}
\item[Subcase 3b]: $i=1$. Then in $\vec z$, $c_2$ is matched with the stored $c_1$, and then any item of even index 
until $c_{j_2}$ is matched with the immediate preceding item of odd index, until $c_{j-1}$ is matched with the stored $c_j$-item. Then, exactly as is in the sub-sub-case above for $i=3$, we get that 
$\Wfcfs(c_1c_j\,\vec z\,\vec z)=\emptyset$ and thus $\Wfcfs(c_1c_j\,z)=\emptyset.$ 
\end{itemize}
\item[Case 4]: $i$ and $j$ are both odd. Then (if $i\ge 3$), in the first $\vec z$ all elements of even index until $c_{i-3}$ are matched with the immediate preceding item. Then, $c_{i-1}$ is matched with the stored $c_i$, and so $c_{i-2}$ is at first unmatched. Then (if $j\ge i+4$), all items of even index until $c_{j-3}$ are matched with the immediate preceding item. Then, $c_{j-2}$ is at first unmatched, whereas $c_{j-1}$ with the stored $c_j$. Then, all remaining elements of $\vec z$ are matched right away two by two until $c_{2q-1}$ is matched with $c_{2q}$. So $\Wfcfs(c_ic_j\,\vec z)=c_{i-2}c_{j-2}c_{2q+1}$ Then, in the second $\vec z$, $c_1$ is matched with the stored $c_{2q+1}$, and then all pairs of elements (if any) between $c_2$ and $c_{i-4}$ are matched two-by-two. Then, $c_{i-1}$ is matched with the stored $c_{i-2}$, and then all pairs of elements (if any) between $c_i$ and $c_{j-5}$ are matched two-by-two. 
Then $c_{j-4}$ remains unmatched, and $c_{j-3}$ is matched with the stored $c_{j-2}$. To finish the second $\vec z$, all elements between $c_{j-2}$ and $c_{2q}$ are stored two-by-two, and so we obtain that 
$\Wfcfs(c_ic_j\,\vec z\,\vec z)=c_{j-4}c_{2q+1}$. But as $j-4$ is an odd number, we can apply \eqref{eq:CIP1}, to conclude that \[\Wfcfs(c_ic_jz)=\Wfcfs(c_{j-4}c_{2q+1}\,\cv z\,\cv z)=\emptyset.\]
\end{itemize}
As a first conclusion, we obtain that for all $i<$ such that $c_i\pv c_j$, 
$\Wfcfs(c_jc_i\,z)=\emptyset.$ 
But it is immediate that whenever $i$ and $j$ are such that $j<i$ and $c_i\pv c_j$, 
we also have that 
\[\Wfcfs(c_ic_j\,z)=\Wfcfs(c_jc_i\,z)=\emptyset.\]
Last, it can be proved exactly as in Case 2 that if $i$ is even $\Wfcfs(c_ic_i\,\vec z\vec z)=\emptyset$, and 
so 
$\Wfcfs(c_ic_i\,z)=\emptyset$. All the same, exactly as in case 4, we get that if $i$ is odd, 
$\Wfcfs(c_ic_i\,\vec z\vec z)=c_kc_{2q+1}$ for some off index $k$. Using \eqref{eq:CIP1}, we conclude again that $\Wfcfs(c_ic_i\,z)=\emptyset$, which concludes the proof.

\end{document}